 \newcommand{\ba}{\begin{align}}
 \newcommand{\ea}{\end{align}}
 \newcommand{\bal}{\begin{align*}}
 \newcommand{\eal}{\end{align*}}
 \DeclareMathOperator{\diam}{diam}
 \newcommand{\Rm}{\mathbf{Rm}}
 \newcommand{\Rc}{\mathbf{Rc}}
 \newcommand{\Sc}{\mathbf{R}}
 \newcommand{\dvol}{\text{d}V}
\newcommand{\dmu}{\text{d}\mu}
 \newcommand{\darea}{\text{d}\sigma}
 \newcommand{\vol}{\text{Vol}\ }
\renewcommand{\epsilon}{\varepsilon}
\newcommand{\rank}{\text{rank}\ }
 \def\ExtendSymbol#1#2#3#4#5{\ext@arrow 0099{\arrowfill@#1#2#3}{#4}{#5}}
 \def\ExtendSymbol#1#2#3#4#5{\ext@arrow 0099{\arrowfill@#1#2#3}{#4}{#5}}
 \definecolor{hao}{rgb}{1,0.5,0}
 \definecolor{miao}{cmyk}{0.5,0,0.2,0.2}
 \definecolor{qiao}{gray}{0.96}
\newtheorem{prop}{Proposition}[section]
\newtheorem{proposition}[prop]{Proposition}
\newtheorem{theorem}[prop]{Theorem}
\newtheorem{prop-defn}[prop]{Proposition-Definition}
\newtheorem{lemma}[prop]{Lemma}
\newtheorem{corollary}[prop]{Corollary}
\newtheorem*{theorem*}{Theorem}
\theoremstyle{remark}
\newtheorem{remark}{Remark}
\numberwithin{equation}{section}
\keywords{Collapsing, infranil manifold, orbifold, Ricci flow, Riemannian
submersion}
\title[Long-time behavior of immortal Ricci flows]{On the long-time behavior of
immortal Ricci flows}
\author{Shaosai Huang}
\address{Department of Mathematics, University of Wisconsin - Madison, 480
Lincoln Drive, Madison, WI 53706, U.S.A.}
\email{sshuang@math.wisc.edu}
\date{\today}
\begin{document}
\maketitle

\begin{abstract}
For an immortal Ricci flow on an $m$-dimensional $(m\ge 3)$ closed manifold, 
we show the following convergence results: 
(1) if the curvature and diameter are uniformly bounded, then any
unbounded sequence of time slices sub-converges to a Riemannian orbifold; (2)
if the flow is type-III with diameter growth controlled by
$t^{\frac{1}{2}}$, then any blowdown limit is an $m$-dimensional negative
Einstein manifold, provided that Feldman-Ilmanen-Ni's
$\boldsymbol{\mu}_+$-functional satisfies $\lim_{t\to \infty}
t\boldsymbol{\mu}_+'(t)=0$.
\end{abstract}

\tableofcontents
\section{Introduction}
\addtocontents{toc}{\protect\setcounter{tocdepth}{1}}

Let $M$ be a closed $m$-dimensional smooth manifold, and suppose it
admits a Ricci flow solution $g(t)$ on $[0,T)$ for some $T>0$, i.e. the
Riemannian metric tensor $g(t)$ satisfies the partial differential equation on
$M$:
\begin{align}
\forall t\in [0,T),\quad \partial_tg(t)\ =\ -2\Rc_{g(t)}.
\end{align}  
Here we may think of $T$ as the first time when the smooth Ricci flow
solution develops a singularity. It is natural to expect that the structure of
the possible rescaled limits of $(M,g(t))$ as $t\to T$ can help us understand
the structure of the manifold $(M,g(t))$ for $t<T$.

With the introduction of the $\mathcal{W}$-functional in \cite{Perelman},
Perelman showed that when $T<\infty$, there is a uniform lower bound (depending
on $g(0)$ and $T$) of the volume ratio as $t\nearrow T$. This is a key step in
his completion of Hamilton's program on proving Thurston's geometrization
conjecture, see \cite{HamiltonSurvey, Hamilton99, Perelman, Perelman2}.

In contrast, for immortal Ricci flows, i.e. when $T=\infty$, a key
difficulty in understanding the long-time limit behavior is that the
\emph{global volume ratio}, $|M|_{g(t)} \diam (M,g(t))^{-m}$, may degenerate to
$0$ as $t\to \infty$. This could not only be seen from the dependence of
Perelman's volume ratio lower bound on time (see, for instance,
\cite[(4.9)]{Foxy1809}, but is also illustrated by the behavior of certain
type-III Ricci flows in dimension three (see \cite{Lott10} and \cite{Bamler18}).

While the blowdown limits of homogeneous immortal Ricci flows have been shown to
be homogeneous expanding Ricci solitons through the deep work of B\"ohm and
Lafuente \cite{BL18} (see also \cite{Bohm15} and \cite{BLS17}), the general case
is far from being well understood. In this article we will focus on studying
the rescaled limits of an immortal Ricci flow $(M,g(t))$, as $t\to \infty$,
under the following uniform curvature-diameter bound: there is a uniform
constant $D>0$ such that
\begin{align}\label{eqn: CD_bound}
\forall t\in [0,\infty),\quad \diam (M,g(t))^2\sup_{M}|\Rm_{g(t)}|_{g(t)}\
\le\ D^2.
\end{align}
Especially, we notice that such assumption is naturally satisfied by type-III
Ricci flows with diameter growth of order $t^{\frac{1}{2}}$; in dimension three,
such Ricci flows will produce single geometric pieces in Thurston's
geometrization program, see \cite[Theorem 1.2 and Remark 1.4]{Lott10}.

 Our first result is the following
\begin{theorem}[Limit of controlled Ricci flows]\label{thm: main1}
Let $(M,g(t))$ be an $m$-dimensional ($m\ge 3$) immortal Ricci flow satisfying
(\ref{eqn: CD_bound}). Suppose that the curvature and diameter of $(M,g(t))$
remain uniformly bounded for all $t\ge 0$, then any unbounded sequence of time
slices $\{(M,g(t_i))\}$ sub-converges to a compact Riemannian orbifold.
\end{theorem}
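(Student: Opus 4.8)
The statement is essentially a compactness result for Riemannian manifolds with bounded curvature, bounded diameter, but *no* lower volume bound — so the limit may collapse and pick up orbifold singularities. The natural approach is to combine Ricci-flow smoothing estimates with Cheeger–Fukaya–Gromov collapsing theory and the $\varepsilon$-regularity philosophy of orbifold limits (à la Anderson, Bando–Kasue–Nakajima, Tian, Cheeger–Tian). Let me sketch the steps.

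\textbf{Step 1: Uniform higher-order curvature bounds.} Under the hypotheses we have $|\Rm_{g(t)}| \le \Lambda$ and $\diam(M,g(t)) \le D_0$ for all $t \ge 0$. Since the flow is immortal, every time slice $g(t_i)$ is preceded by an infinite amount of flow time, so Shi's local derivative estimates (which only need a two-sided time interval of definite length and a curvature bound) yield uniform bounds $|\nabla^k \Rm_{g(t_i)}| \le C_k$ for all $k$, with constants independent of $i$. So it suffices to prove the orbifold-compactness statement for a sequence of metrics with all covariant derivatives of curvature bounded and diameter bounded.

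\textbf{Step 2: Local structure via a fixed lower volume bound away from a bad set.} For each $i$ and each scale, partition $M$ by whether small metric balls are ``collapsed'' or ``non-collapsed.'' On the non-collapsed part — points $x$ where $|B_{g(t_i)}(x,r)|_{g(t_i)} \ge v_0 r^m$ for a fixed small $r$ — the local injectivity radius is bounded below by Cheeger–Gromov–Taylor together with the curvature bounds, so one extracts smooth Cheeger–Gromov limits. On the collapsed part, Cheeger–Fukaya–Gromov structure theory produces an N-structure / local nilpotent-Lie-group symmetry, and the quotient by the collapsing directions is what survives in the Gromov–Hausdorff limit. The key point one needs is that, because all derivatives of curvature are bounded, the collapsing is ``with bounded geometry'' in the strong sense, so the collapsed directions shrink uniformly and the limit space is a smooth manifold (or orbifold) away from finitely many points.

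\textbf{Step 3: $\varepsilon$-regularity and the orbifold singularities.} The bad points are exactly those where volume collapses *and* curvature concentrates; blowing up at such a point and using the curvature+derivative bounds produces a complete, nonflat, Ricci-flat (in the Ricci-flow rescaled sense, an ancient or eternal limit — here, since derivatives are all bounded, a smooth gradient shrinker or a flat cone) ALE-type model. A standard Chern–Gauss–Bonnet / $L^{m/2}$-energy argument bounds the number of such points independently of $i$: each bad point carries a definite amount of curvature energy, and the total $\int_M |\Rm|^{m/2}$ is bounded by $\Lambda^{m/2}|M|_{g(t_i)}$...

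Wait, but $|M|_{g(t_i)}$ itself could be degenerating to zero under collapse, so actually that bound is getting *better*, not worse — the concern is rather that the blowup rescaling inflates volume. Let me reconsider: the right statement is that near a genuinely singular point the local geometry is non-collapsed at the blowup scale (otherwise curvature couldn't concentrate while staying bounded — collapsed regions have curvature bounded by Step 1), so a definite volume ratio appears there, and finiteness of the diameter bounds the count of such regions. The orbifold group at each singular point is the (finite, by the curvature bound) local fundamental group of the punctured neighborhood.

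\textbf{Step 4: Assembly.} Glue the smooth non-collapsed limit, the smooth collapsed-quotient limit, and the finitely many orbifold points into a compact length space; the bounded diameter gives compactness, and the local models give the orbifold-smooth atlas. Conclude that $(M,g(t_i))$ sub-converges in Gromov–Hausdorff sense to a compact Riemannian orbifold.

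\textbf{Main obstacle.** The hard part is Step 2–3: reconciling the collapsing (Cheeger–Fukaya–Gromov) picture with the orbifold-singularity (Anderson/BKN) picture. In pure collapsing with bounded curvature the limit is a lower-dimensional *Alexandrov/manifold* space, not obviously an orbifold with the expected $m$-dimensional orbifold charts; and in pure non-collapsing the orbifold singularities are well understood. The subtlety is the *mixed* regime — regions that are collapsed but with a multiply-covered collapsing direction, which in the limit look like orbifold points coming from the holonomy of the N-structure rather than from curvature concentration. Handling these, and showing there are only finitely many and that they fit into genuine orbifold charts, is where the real work lies; I expect the argument to lean on a careful local analysis of the nilpotent Killing structure combined with the $\varepsilon$-regularity theorem, quite possibly using that the flow itself persists near $t_i$ to smooth out and rigidify the local models. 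Let me also flag that one must verify the limit is genuinely $m$-dimensional (not lower) — this should follow from the diameter and an a priori volume non-collapsing that one might need to extract from the $\mathcal{W}$-functional or from the immortality assumption, though given only the stated hypotheses it is conceivable the ``orbifold'' is allowed to be lower-dimensional, in which case Step 2 already suffices and Step 3 is vacuous.

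\medskip

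\emph{Remark on presentation.} In writing this up I would isolate the $\varepsilon$-regularity estimate as a separate lemma, state the collapsing structure result in the precise form needed (bounded curvature + bounded derivatives $\Rightarrow$ F-structure with controlled injectivity radius), and only then do the gluing; this keeps the orbifold-chart verification self-contained.
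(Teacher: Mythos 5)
Your Step 1 (Shi's estimates giving uniform $C^\infty$ curvature bounds) is correct and coincides with what the paper uses. After that the proposal goes off in a direction that does not match the actual obstruction.

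\textbf{The curvature-concentration picture does not apply.} The hypothesis (\ref{eqn: CD_bound}) together with bounded diameter gives a \emph{uniform two-sided} sectional curvature bound on every time slice. There is therefore no curvature concentration anywhere, no blow-up points, no ALE models, and the Anderson/Bando--Kasue--Nakajima/Tian $\varepsilon$-regularity machinery has nothing to bite on. Your Step 3 is aimed at a regime (volume non-collapsed, curvature possibly unbounded in $L^\infty$ but bounded in $L^{m/2}$) that is essentially opposite to the one at hand (curvature uniformly bounded, volume possibly collapsing). The orbifold singularities that appear in the collapsed Gromov--Hausdorff limit $(X,d)$ come from finite isotropy of the local nilpotent Killing structure (as you yourself briefly noticed in the ``main obstacle'' paragraph), not from energy concentration, and the limit $X$ is of \emph{lower} Hausdorff dimension — the theorem does not claim $X$ is $m$-dimensional, so your final worry is misplaced.

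\textbf{The actual difficulty, and the missing mechanism.} By Fukaya's boundary theorem, a sequence of closed $m$-manifolds with all derivatives of curvature bounded and bounded diameter sub-converges to a metric space $X$ that is locally isometric to a quotient of a Euclidean ball by a compact group $G_x$ whose identity component is a torus. When $G_x$ is finite one gets an orbifold point; when $\dim G_x>0$ one gets a ``corner singularity,'' and $X$ is then \emph{not} an orbifold. This corner stratum $\tilde{\mathcal S}$ can genuinely occur for general collapsing sequences with bounded curvature (e.g. non-polarized $F$-structures), so ruling it out must use the Ricci flow evolution — nothing in your proposal does. The paper's mechanism is: (i) unwrap the fibration near orbifold points to obtain uniformly non-collapsed covers $W_{x_0}$ carrying smooth limit metrics $\tilde g_\infty$; (ii) use monotonicity of Perelman's $\mathcal F$-functional, together with $\mathcal F\nearrow 0$ and a second-derivative bound on $\mathcal F$, to conclude $\mathcal F'\to 0$, and transport this via a measured-Gromov--Hausdorff integral convergence theorem to get a gradient \emph{steady Ricci soliton} equation for $\tilde g_\infty$ on $W_{x_0}$; (iii) introduce the limit central density $\chi_C$, which is continuous on $X$ with $\tilde{\mathcal S}=\chi_C^{-1}(0)$ and is locally represented by $\sqrt{\det H}$, the volume density of a commuting family of Killing fields tangent to the collapsing fibers; (iv) apply O'Neill's formula to the Riemannian foliation by these central Killing fields, which turns the soliton equation into an elliptic equation for $\ln\det H$ with \emph{vanishing} right-hand side (this is why one must pass from the full nilpotent fibers — whose scalar curvature can be negative — to the flat central leaves); and (v) run the strong maximum principle at a global maximum of $\chi_C$, which necessarily lies in $\tilde{\mathcal R}$ since $\chi_C$ vanishes on $\tilde{\mathcal S}$, to conclude $\chi_C$ is a positive constant and hence $\tilde{\mathcal S}=\emptyset$. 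Without some version of steps (ii)–(v) — in particular, without extracting a soliton-type elliptic equation from the flow and without isolating the central directions — there is no known way to exclude the corner stratum, and the theorem does not reduce to a compactness statement for a single class of manifolds with bounded geometry.
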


Clearly, if the global volume ratio $|M|_{g(t_i)} \diam (M,g(t_i))^{-m}$ has a
uniform positive lower bound along the flow $(M,g(t))$, then
Theorem~\ref{thm: main1} follows directly from the classical results in
\cite{CGT82}, \cite{Hamilton95}, \cite{Perelman} and \cite{FIN05} --- the limit
is actually a closed $m$-dimensional Ricci flat manifold. The major concern of
the current work is therefore the case when $\liminf_{t\to \infty} |M|_{g(t)}\diam
(M,g(t))^{-m}=0$.

 An immediate consequence of Theorem~\ref{thm: main1} and \cite[Theorem
 7-6]{Fukaya89} is the following structural result concerning the
 \emph{sufficiently collapsed} time slices in an immortal Ricci flow (see \S
 2.1.2 and \cite[\S 7]{Fukaya89} for relavent definitions):
\begin{corollary}\label{cor: cor1}
There is a positive constant $\nu(m)>0$ such that for any immortal Ricci flow
$(M,g(t))$ as in Theorem~\ref{thm: main1} with sectional curvatures bounded by
$1$, if $|M|_{g(t_0)}\diam (M,g(t_0))^{-m}\le \nu(m)$ for some $t_0>0$, then
$M$ is an infranil fiber bundle over a compact (lower dimensional) Riemannian
orbifold.
\end{corollary}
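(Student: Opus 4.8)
The plan is to combine the orbifold regularity of collapsed limits furnished by Theorem~\ref{thm: main1} with Fukaya's fibration theorem over Riemannian orbifold bases. Since the threshold $\nu(m)$ must depend on $m$ alone, I would argue by contradiction: if no such $\nu(m)$ existed, then for every $k\in\N$ there would be an immortal Ricci flow $(M_k,g_k(t))$ satisfying the hypotheses of Theorem~\ref{thm: main1} with sectional curvature bounded by $1$, together with a time $t_k>0$, such that
\[
|M_k|_{g_k(t_k)}\,\diam(M_k,g_k(t_k))^{-m}\ \le\ \frac{1}{k},
\]
while $M_k$ is not an infranil fiber bundle over any compact lower-dimensional Riemannian orbifold.

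The first step is to observe that each time slice $(M_k,g_k(t_k))$ has bounded geometry of the type exploited in the proof of Theorem~\ref{thm: main1}: the sectional curvature bound gives $|\Rm_{g_k(t_k)}|\le c(m)$, and, since the flows are immortal and $t_k>0$, the interior derivative estimates for the Ricci flow applied on a fixed time interval terminating at $t_k$ yield uniform bounds on all covariant derivatives of $\Rm_{g_k(t_k)}$; combined with the curvature--diameter bound (\ref{eqn: CD_bound}) this is exactly the input that lets the argument behind Theorem~\ref{thm: main1} upgrade a Gromov--Hausdorff limit of the collapsing slices to a Riemannian orbifold. Because the global volume ratio tends to $0$, the slices $(M_k,g_k(t_k))$ cannot converge (even after passing to a subsequence) to an $m$-dimensional limit, since a non-collapsed $C^{1,\gamma}$-limit would force the volume ratios to converge to a positive number; running the orbifold-compactness argument of Theorem~\ref{thm: main1} on this sequence of time slices, rather than on an unbounded sequence within a single flow, therefore produces a compact Riemannian orbifold $\mathcal{O}$ with $\dim\mathcal{O}<m$ and a subsequence with $(M_k,g_k(t_k))\GHconv\mathcal{O}$.

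The last step is to apply Fukaya's fibration theorem \cite[Theorem~7-6]{Fukaya89}: since $\mathcal{O}$ is a fixed compact Riemannian orbifold of dimension strictly less than $m$ and $d_{GH}\big((M_k,g_k(t_k)),\mathcal{O}\big)\to0$ while the sectional curvatures stay bounded by $1$, for all sufficiently large $k$ there is a smooth fiber bundle $M_k\to\mathcal{O}$ whose fibers are infranilmanifolds. This contradicts the defining property of the sequence $(M_k,g_k)$, and the desired $\nu(m)$ exists.

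The step I expect to be the main obstacle is the identification, with constants uniform across the whole sequence of flows, of the collapsed Gromov--Hausdorff limit with a Riemannian orbifold of lower dimension. A limit of manifolds carrying only a two-sided sectional curvature bound is in general merely an Alexandrov space equipped with a Cheeger--Fukaya--Gromov $N$-structure, and need not be an orbifold; the orbifold conclusion relies on the Ricci flow in an essential way, through the bound (\ref{eqn: CD_bound}), the higher-order curvature estimates above, and the $\epsilon$-regularity and smoothing arguments internal to the proof of Theorem~\ref{thm: main1}. A secondary technical point is that the diameter bound of Theorem~\ref{thm: main1} a priori depends on the individual flow, so one must either first renormalize the slices $(M_k,g_k(t_k))$ to a bounded-diameter scale or build the fiber bundle locally and patch it together using the global orbifold structure of $\mathcal{O}$, exactly as in the local-to-global constructions of \cite{Fukaya89}.
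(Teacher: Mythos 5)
Your contradiction scheme introduces a sequence of \emph{distinct} immortal Ricci flows $(M_k,g_k(t))$ at times $t_k>0$, and the pivotal step --- asserting that the collapsed Gromov--Hausdorff limit of the time slices $(M_k,g_k(t_k))$ is a Riemannian orbifold by ``running the orbifold-compactness argument of Theorem~\ref{thm: main1}'' --- does not go through. The mechanism by which Theorem~\ref{thm: main1} eliminates corner singularities $\tilde{\mathcal{S}}$ is a maximum-principle argument applied to the elliptic equation (\ref{eqn: elliptic_equality}) for $\ln\det H$, and that equation is available only because the limit metric on the unwrapped neighborhoods satisfies the gradient steady Ricci soliton equation (Proposition~\ref{prop: steady_soliton_metric}). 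The soliton equation rests in turn on Lemma~\ref{lem: vanishing_F'}, namely $\mathcal{F}'(g(t),u(t))\to 0$ as $t\to\infty$ for a \emph{single} immortal flow carrying a conjugate-heat solution $u$ defined for all time. For a diagonal sequence of different flows evaluated at times $t_k$, which need not tend to infinity, none of this structure is present; and if one tries to replace $t_k$ by much later times within each flow, there is no control keeping the volume ratio small, since that ratio is not monotone along the Ricci flow. A secondary defect: Shi's estimates do not yield uniform bounds on $\nabla^l\Rm_{g_k(t_k)}$ if $t_k\to 0$, which the hypothesis ``$t_0>0$'' permits.

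The paper's route is direct and dispenses with the diagonal sequence entirely. For the \emph{given} immortal flow one applies Theorem~\ref{thm: main1} with $t_i\to\infty$ inside that single flow --- there the soliton argument is valid --- and obtains a compact Riemannian orbifold as the collapsing limit. The universal constant $\nu(m)$ is then supplied by Fukaya's Theorem~7-6 in \cite{Fukaya89}, which is precisely the assertion that a closed $m$-manifold with $|K|\le 1$ whose normalized volume $\vol\cdot\diam^{-m}$ lies below a dimensional threshold and which collapses to a Riemannian orbifold carries an infranil fiber bundle structure over that orbifold. Thus Theorem~\ref{thm: main1} removes the corner singularities obstructing an orbifold base, while Fukaya supplies both the fibration and the constant. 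You correctly identified the main obstacle --- establishing orbifold regularity of the collapse limit uniformly across the sequence --- but your contradiction scheme cannot resolve it, because the soliton structure that eliminates corner singularities is fundamentally a long-time phenomenon internal to a single Ricci flow.
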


The conclusion of this corollary about $M$ being an infranil bundle over a
Riemannian orbifold could be rephrased in other languages. It is equivalent to
say that $M$ admits a pure polarized $F$-structure \emph{a la} Cheeger and
Gromov \cite{CG86, CG90}. It is also the same as saying that $M$, together with
the fiber-wise infinitesimal nilpotent group actions, is Morita equivalent to
an \'etale groupoid, in the groupoid approach to collapsing geometry pioneered 
by Lott \cite{Lott07}.

In fact, it is expected, as pointed out by Richard Bamler and Aaron Naber, that
the evolution of an immortal Ricci flow with uniformly bounded curvature and
diameter will not cause volume collapsing. But since we cannot make an \emph{a
priori} assumption on the uniform positive lower bound of the global volume
ratio, we have to first study the possible collapsing geometry as $t\to \infty$
in Theorem~\ref{thm: main1}, understand the structure of the Ricci flow when
the metric is sufficiently collapsed as in Corollary~\ref{cor: cor1}, and then
try to obtain a desired positive lower bound of the global volume ratio via a
contradiction argument, \emph{a posteriori}; see also \cite[\S 1 and \S
6]{Foxy1705} for discussions on a similar strategy concerning the uniform
$\boldsymbol{\mu}$-entropy lower bound of $4$-dimensional Ricci shrinkers.

This strategy is illustrated in another natural situation about immortal Ricci
flows satisfying (\ref{eqn: CD_bound}): for compact type-III Ricci flows with
diameter growth of order $t^{\frac{1}{2}}$, we show that the global volume
ratio has a positive lower bound depending on the limit behavior of the
$\boldsymbol{\mu}_+$-functional. To state our second result, we recall that the
$\boldsymbol{\mu}_+$-functional defined by Feldman, Ilmanen and Ni \cite{FIN05}
\begin{align*}
\boldsymbol{\mu}_+(t)\ :=\ \inf\left\{\mathcal{W}_+(g(t),u,t):\ \int_Mu\
\dvol_{g(t)}=1\right\}
\end{align*}  
is non-decreasing along the Ricci flow, and it is differentiable with respect to
$t$. In this case, we have the following
\begin{theorem}[Non-collapsing of certain type-III Ricci flows] 
\label{thm: main2}
Let $(M,g(t))$ be an $m$-dimensional ($m\ge 3$) immortal Ricci flow satisfying
(\ref{eqn: CD_bound}). If $(M,g(t))$ is type-III with $\diam (M,g(t))
=O(t^{\frac{1}{2}})$, then
\begin{align}\label{eqn: main2}
\limsup_{t\to\infty}t\boldsymbol{\mu}_+'(t)\ =\ 0\quad \Rightarrow \quad
\liminf_{t\to \infty} |M|_{g(t)}\diam (M,g(t))^{-m}\ >\ 0.
\end{align}
\end{theorem}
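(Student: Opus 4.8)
The plan is to argue by contradiction, combining the collapsed-limit structure from Theorem~\ref{thm: main1} and Corollary~\ref{cor: cor1} with the ``almost expanding soliton'' behaviour forced by the hypothesis on $\boldsymbol{\mu}_+$. So suppose, to the contrary, that $\limsup_{t\to\infty}t\boldsymbol{\mu}_+'(t)=0$ (equivalently $\lim_{t\to\infty}t\boldsymbol{\mu}_+'(t)=0$, since $\boldsymbol{\mu}_+'\ge0$) while $\liminf_{t\to\infty}|M|_{g(t)}\diam(M,g(t))^{-m}=0$. Choose $t_i\to\infty$ realizing the latter, and set $g_i(s):=t_i^{-1}g(t_i s)$ for $s\in[\tfrac12,2]$. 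Type-III gives $|\Rm_{g_i(s)}|_{g_i(s)}\le C_0s^{-1}\le2C_0$; the growth bound $\diam(M,g(t))=O(t^{1/2})$ gives $\diam(M,g_i(s))\le C_1\sqrt s\le2C_1$; and the scale-invariant ratio at $s=1$ equals $|M|_{g(t_i)}\diam(M,g(t_i))^{-m}\to0$, so with the diameter bounded, $|M|_{g_i(1)}\to0$. Thus $(M,g_i(1))$ collapses with uniformly bounded curvature and diameter. Rescaling the curvature bound to $1$ and passing to a subsequence, the orbifold sub-convergence established in the proof of Theorem~\ref{thm: main1} gives $(M,g_i(1))\to\mathcal{O}$ (equivariant Gromov--Hausdorff), where $\mathcal{O}$ is a compact Riemannian orbifold of dimension $k<m$ (strict by the volume collapse); and by \cite[Theorem 7-6]{Fukaya89}, as in Corollary~\ref{cor: cor1}, for $i$ large $M$ is an infranil fiber bundle over a manifold close to $\mathcal{O}$, with fibers of dimension $q:=m-k\ge1$, carried by a local nilpotent Killing structure with controlled local-cover limits.

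I would then extract the soliton structure. The Feldman--Ilmanen--Ni monotonicity formula \cite{FIN05} reads $\boldsymbol{\mu}_+'(t)=2t\int_M|\Rc_{g(t)}+\nabla^2 f_t+\tfrac1{2t}g(t)|_{g(t)}^2\,u_t\,\dvol_{g(t)}$, with $f_t$ the minimizer for $\boldsymbol{\mu}_+(t)$ and $u_t=(4\pi t)^{-m/2}e^{-f_t}$; a scale-invariant rewriting in terms of $\hat g^{(t)}:=t^{-1}g(t)$ turns this into
\begin{align*}
t\boldsymbol{\mu}_+'(t)\ =\ 2\int_M\big|\Rc_{\hat g^{(t)}}+\nabla^2 f_t+\tfrac12\hat g^{(t)}\big|_{\hat g^{(t)}}^2\,(4\pi)^{-m/2}e^{-f_t}\,\dvol_{\hat g^{(t)}},
\end{align*}
a probability-weighted $L^2$-norm of the gradient expanding soliton defect at the self-similar scale $1$. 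Since $\hat g^{(t_i)}=g_i(1)$ and $t_i\boldsymbol{\mu}_+'(t_i)\to0$, this defect tends to $0$ in $L^2$ against the probability measure $(4\pi)^{-m/2}e^{-f_{t_i}}\dvol_{g_i(1)}$. Feeding this into the Euler--Lagrange equation for $f_{t_i}$ (semilinear elliptic with coefficients controlled by the curvature bound), improving the estimate, and passing to the equivariant limit, the limit metric $g_\infty$ on $\mathcal{O}$ together with a limit potential $f_\infty$ of the $f_{t_i}$ would satisfy the exact gradient expanding soliton equation; on the local covers the $m$-dimensional limit would satisfy $\Rc+\nabla^2 f_\infty+\tfrac12 g=0$ with $f_\infty$ invariant under, hence constant along the orbits of, the limiting nilpotent structure. (In particular, $\mathcal{O}$ being compact, $g_\infty$ is Einstein with $\Rc_{g_\infty}=-\tfrac12 g_\infty$.)

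The contradiction then comes from the fiber directions. On a local cover pick an orthonormal frame $V_1,\dots,V_q$ tangent to a nilpotent orbit; tracing $\Rc+\nabla^2 f_\infty+\tfrac12 g=0$ over it gives
\begin{align*}
\sum_{a=1}^q\Rc(V_a,V_a)\ +\ \sum_{a=1}^q\nabla^2 f_\infty(V_a,V_a)\ +\ \tfrac q2\ =\ 0.
\end{align*}
The bounded-curvature collapse forces this orbit to be flat (so abelian) and totally geodesic, and $f_\infty$ is orbit-constant; hence $\sum_a\nabla^2 f_\infty(V_a,V_a)=0$ and, by O'Neill's formula for submersions with totally geodesic fibers, $\sum_a\Rc(V_a,V_a)=\sum_a|AV_a|^2\ge0$. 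Then $0=\sum_a|AV_a|^2+\tfrac q2\ge\tfrac q2>0$ since $q=m-k\ge1$, which is absurd. Hence the two standing assumptions are incompatible, proving (\ref{eqn: main2}).

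I expect the real work to be in the second paragraph: upgrading the \emph{degenerate} weighted-$L^2$ almost-soliton estimate --- whose weight $(4\pi)^{-m/2}e^{-f_{t_i}}\dvol_{g_i(1)}$ concentrates, since collapse forces $f_{t_i}\to-\infty$ on average --- together with the behaviour of the minimizers $f_{t_i}$, into a \emph{genuine} soliton structure on the collapsed equivariant limit, with \emph{flat, totally geodesic} nilpotent orbits along which the potential is \emph{constant}. Obtaining uniform control of $f_{t_i}$, $|\nabla f_{t_i}|$ and $\boldsymbol{\mu}_+(t_i)$ (up to the natural normalization), establishing the near-invariance of $f_{t_i}$ under the local nilpotent structure, and pinning down the second fundamental form of the limiting fibers, are precisely where the fine collapsing-Ricci-flow analysis underlying Theorem~\ref{thm: main1} must be used; the rest is soft.
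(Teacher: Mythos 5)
Your overall strategy---rescale, extract an expanding soliton structure from $t\boldsymbol{\mu}_+'(t)\to0$, and trace the soliton equation in fiber directions to manufacture a contradiction with collapse---is the right skeleton, and it matches the paper's. But the execution of the final step is flawed in a way that the paper explicitly calls out and circumvents.

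You trace the equation $\Rc+\nabla^2 f_\infty+\tfrac12 g=0$ over a frame $V_1,\dots,V_q$ spanning the \emph{entire} nilpotent orbit, and then assert that ``bounded-curvature collapse forces this orbit to be flat (so abelian) and totally geodesic.'' Neither claim is true. The orbit is an infranil manifold carrying a left-invariant metric, and by Milnor~\cite[Theorem 3.1]{Milnor76}---quoted in the paper's introduction precisely to flag this issue---such a metric has strictly negative scalar curvature unless the group is abelian, so the fibers are generically non-flat. Likewise, the second fundamental form of the $f_i$-fibers in the approximating invariant metric is uniformly \emph{bounded} (see the comment after Proposition~\ref{prop: invariant_metric}) but not zero, so the limit fibers need not be totally geodesic. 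Consequently your trace identity $\sum_a\Rc(V_a,V_a)=\sum_a|AV_a|^2$ is missing the intrinsic fiber scalar curvature $\Sc_G\le0$ and the second-fundamental-form terms, and your identity $\sum_a\nabla^2 f_\infty(V_a,V_a)=0$ fails as well: even for an orbit-constant $f_\infty$, $\sum_a\nabla^2 f_\infty(V_a,V_a)=-\,df_\infty\bigl(\sum_a\nabla_{V_a}V_a\bigr)$ is the pairing of $\nabla f_\infty$ with the mean curvature vector, which vanishes only if the fibers are minimal. The pointwise positivity $\frac q2>0$ can therefore be absorbed by the negative terms, and the contradiction evaporates.

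The paper's fix is to trace over the \emph{central distribution} $\mathfrak{C}=\{X_1,\dots,X_{k_0}\}$ rather than the whole fiber. Since $\mathfrak{C}$ generates an abelian Lie group $Z$, its leaves are intrinsically flat, so there is no $\Sc$ term to fight against; and the drift formulation via $\ln\det H=\ln|X_1\wedge\cdots\wedge X_{k_0}|_{\tilde g_\infty}^2$ leads directly to the elliptic equality (\ref{eqn: elliptic_equality2}) with nonnegative right-hand side $\tfrac12|\mathbf A|^2+\dim Z$ without needing a totally geodesic hypothesis. The maximum principle at a global max of $\chi_C$ (characterized geometrically through Theorem~\ref{thm: limit_central_density} as a constant multiple of $\sqrt{\det H}$) then forces $\dim Z=0$; since the center of a nontrivial nilpotent Lie group is never trivial, this means no collapse occurred, giving the contradiction. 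Your proof has the right global shape but needs to be rerouted through the central distribution, not the full fiber; without that, the crucial sign of the right-hand side is not under control.
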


This theorem, to be proven in \S 6, could be seen as a Ricci flow
version of a theorem of Rong \cite[Theorem 0.4]{Rong98}; see also \S 7 for a
simple proof of Rong's theorem. Notice that the asymptotic degeneration of the
global volume ratio forces $\boldsymbol{\mu}_+(t)$ to be unbounded, see
(\ref{eqn: W+_blow}), and the theorem tells that $\boldsymbol{\mu}_+(t)$ should
grow faster than $\ln t$ in this case. In fact, if
$\limsup_{t\to\infty}t\boldsymbol{\mu}_+'(t)=0$, then by \cite{CGT82},
\cite{Hamilton95}, \cite{Perelman} and \cite{FIN05}, any blowdown limit is an
$m$-dimensional negative Einstein manifold.

The proof of Theorems~\ref{thm: main1} and \ref{thm: main2} are inspired by
the works of Lott \cite{Lott10, Lott17} and of Naber and Tian \cite{NaberTian},
and are based on the understanding of collasping geometry in the deep
work of Cheeger, Fukaya and Gromov \cite{CFG92}, as well as the measured
Gromov-Hausdorff convergence introduced by Fukaya \cite{Fukaya87}.

To further understand the content of Theorem \ref{thm: main1}, let us briefly
discuss the structure of the possible limit metric spaces to which the
sequences of manifolds in this theorem may converge. Adapting to the
situations of the above theorems, we will assume to have a sequence of
Riemannian manifolds $\{(M_i,g_i)\}$ with $\diam (M_i,g_i)\le D$, and we will
assume that
\begin{align}\label{eqn: regularity}
\forall l\in \mathbb{N},\quad \sup_{M_i}|\nabla^l\Rm_{g_i}|_{g_i}\ \le\
C_{1.3}(l),\quad \text{and}\quad C_{1.3}(0)=1.
\end{align} 
Thanks to Shi's estimates \cite{Shi}, these assumptions are satisfied for the
sequences $\{(M,g(t_i))\}$ in Theorems \ref{thm: main1} and \ref{thm: main2}.

 Assuming $|M_i|_{g_i}\to 0$ as $i\to \infty$, by Gromov's compactness theorem
 \cite{GLP}, we know that after possibly passing to a sub-sequence (still
 denoted by the original one), $\{(M_i,g_i)\}$ converges in the Gromov-Hausdorff
 topology to a lower (Hausdorff) dimensional metric space $(X,d)$. 
 Although the lack of a uniform injectivity radius lower bound for
 $\{(M_i,g_i)\}$ makes $X$ fail to be a manifold in general, the regularity
 assumption (\ref{eqn: regularity}) gives more information on both the
 collapsing limit space $(X,d)$ and the convergence procedure. By \cite[Theorems
 0.5]{Fukaya88}, we know that $(X,d)$ is, roughly speaking, an orbifold with
 corners --- each point of $X$ has a sufficiently small neighborhood isometric
 to the quotient of some open set in $\mathbb{R}^m$, equipped with a Riemannian
 metric that is invariant under the action of a germ of a nilpotent Lie group
 $N$. The singularity types of a point $x\in X$ then depend on the isotropy
 group $G_x$ of the isometric action by the germ of $N$. The isotropy group can
 be either finite, giving rise to an \emph{orbifold point} (including the
 possibility of being a \emph{regular point} when the isotropy group is
 trivial), or be a finite extension of a torus group, in which case the
 resulting point is a \emph{corner singularity}. We denote the subset of
 regular points of $X$ by $\mathcal{R}$, the collection of orbifold points in
 $X$ as $\tilde{\mathcal{R}}$, and the subset of corner singularities as
 $\tilde{\mathcal{S}}$. Clearly $\tilde{\mathcal{R}}\backslash \mathcal{R}$
 consists of \emph{orbifold singularity}, i.e. those points with finite but
 non-trivial isotropy groups, and $X=\tilde{\mathcal{R}}\sqcup
 \tilde{\mathcal{S}}$. Here we notice that $\tilde{\mathcal{R}}$ is an open
 subset of $X$, and $\tilde{\mathcal{S}}$ is a closed subset of codimension at
 least $1$ in $X$. Consequently, the metric $d$ on $X$ is induced by a
 Riemannian metric $g_X$ on $\tilde{\mathcal{R}}$. See \S 2.1 for more details.
 
 Therefore, the point of Theorem \ref{thm: main1} is to show that under the
 evolution of the Ricci flows, the corner singularity $\tilde{\mathcal{S}}$
 \emph{cannot} possibley appear in the collapsing limit $X$. In view of
 Corollary \ref{cor: cor1}, such reduction of the limit singularity type
 provides rich information about the topological structure of the underlying
 manifold, if the global volume ratio becomes sufficiently small along the Ricci
 flow.

 \begin{remark}\label{rmk: S_nonempty}
 The fact that $\tilde{\mathcal{S}}$ may not be empty is the same as saying that
 the $F$-structure on $M_i$ (for $i$ sufficiently large) is not necessarily
 polarized, see \cite{CG86, CR96}. In terms of the groupoid approach \cite[\S
 5]{Lott07}, this tells that the limit groupoid, in its \emph{natural}
 topology, is \emph{not} necessarily Morita equivalent to an \'etale groupoid.
 For a notion of Riemannian metrics on such limit groupoids, see \cite{GGHR89,
 dHF14}. By \cite{Fukaya88}, the local structure around the corner singularity
 can also be described as a linearized singular Riemannian foliation, equipped
 with a bundle-like metric, see \cite{Molino88}.  Compare also \cite{GL13,
 Hilaire14} for a notion of cross-product groupoid on the orthogonal frame
 bundle.
 \end{remark}
 
 The evolution equation of the Ricci flow plays a necessary role in the 
 reduction of the singularity type --- as pointed out in Remark \ref{rmk:
 S_nonempty}, for a generic collapsing sequence with bounded diameter and
 sectional curvature, it is totally possible that $\tilde{\mathcal{S}}\not=
 \emptyset$, see \cite[Example 1.7]{CG86}. Along the Ricci flow we should
 expect, as $t\to \infty$, certain gradient steady Ricci soliton metric at
 the limit; and the corresponding elliptic equations satisfied by the limit
 metric will impose strong constraint on the possibility of singularity types.
 Here we encounter a major issue caused by the possible volume collapsing --- we
 do not have any local coordinate system in which the limit soliton metric can
 be written down.
 
 This issue can be resolved, at least around the orbifold points, if
 we recall the fiberation theorems \cite[Theorem 0.12]{Fukaya88}, \cite[Theorem
 0-7]{Fukaya89} and \cite[Theorem 2.6]{CFG92}: for all $i$ sufficiently large,
 there is a continuous surjective map $f_i:M_i\to X$, called a \emph{singular
 fibration}, such that for any $x\in \tilde{\mathcal{R}}$, we can find $U\subset
 \tilde{\mathcal{R}}$ sufficiently small, so that $\forall x'\in
 f_i^{-1}(U)\subset M_i$, (a finite covering of) $f_i^{-1}(x')$ is homeomorphic
 to an infranil manifold $F_i$, and the extrinsic diameter of each fiber is
 bounded by $3d_{GH}(M_i,X)$. Moreover, the collapsing of $(M_i,g_i)$ to $(X,d)$
 is exactly caused by the shrinking of the $f_i$ fibers to points. Notice that
 each $F_i$ is a quotient of a simply connected nilpotent Lie group $N_i$ by a
 finite extension $\Gamma_i$ of a cocompact lattice $L_i\le N_i$, and roughly
 speaking, the shrinking of the $f_i$ fibers to a point is caused by the
 increasingly dense action of $L_i$ on the universal covering $N_i$ of the $f_i$
 fibers. Therefore, it is natural to consider $W_{i}$, the universal
 convering of $f_i^{-1}(U)$, which fibers over $U$ by the universal
 coverings of the $f_i$ fibers (homeomorphic to $N_i$). Equipping $W_i$
 with the covering metric $\tilde{g}_i$, the regularity
 assumption (\ref{eqn: regularity}) then ensures a uniform lower bound of the
 injectivity radius. Therefore we can work on the neighborhoods $W_{i}$, and
 take limit out of the metrics $\{\tilde{g}_i\}$. More precisely, we have the
 following
 \begin{theorem}[Unwrapped neighborhoods around orbifold points]\label{thm:
 canonical_nbhd} 
 Let $\{(M_i,g_i)\}$ be a sequence of $m$-dimensional Riemannian
 manifolds satisfying (\ref{eqn: regularity}) that collapses to a
 (Hausdorff) $n$-dimensional metric space $(X,d)$, and let $f_i:M_i\to X$ denote
 the singular fibration described in \cite[Theorem 0.12]{Fukaya88}. For any
 $x_0\in \tilde{\mathcal{R}}$, there is a sufficiently small neighborhood
 $U_{x_0}\subset \tilde{\mathcal{R}}$, an orbifold covering $V_{x_0}\subset
 \mathbb{R}^n$ with a finite covering group $G_{x_0}$, and a
 $G_{x_0}$ invariant Rieannian metric $\hat{g}_X$ on $V_{x_0}$, such that
 $(U_{x_0},d)\equiv (V_{x_0},\hat{g}_X)\slash G_{x_0}$ with quotient map
 denoted by $q_{x_0}$. Moreover, there are $W_{x_0}:= V_{x_0}\times
 \mathbb{R}^{m-n}$ together with the natural projection $p:W_{x_0}\to V_{x_0}$,
 and a small positive number $r_{x_0}>0$ (depending only on $x_0$ and $X$), to the
 following effect:
 \begin{enumerate}
   \item on $W_{x_0}$ there are families of $G_{x_0}$ invariant
   Riemannian metrics $\{\tilde{g}_{i}\}$ and $G_{x_0}$
   equivariant connections $\{\tilde{\nabla}_i\}$ subject to the following
   regularity control:
   for any $l\in \mathbb{N}$, 
   \begin{align*}
   \sup_{W_{x_0}}|\nabla^l\Rm_{\tilde{g}_i}|_{\tilde{g}_i}\ \le\ C_l
   r_{x_0}^{1-l}\quad \text{and}\quad \sup_{W_{x_0}}
   |\nabla^l(\tilde{\nabla}_i-\tilde{\nabla}^{LC}_i)|\ \le\ C_l
   r_{x_0}^{-2-l}d_{GH}(M_i,X),
    \end{align*}
    where $\tilde{\nabla}^{LC}_i$ denotes the Levi-Civita connection of
    $\tilde{g}_{i}$.
   \item $N_{i}:=\left(p^{-1}(x_0),\tilde{\nabla}_{i,x_0},(x_0,o)\right)$
   becomes an $(m-n)$-dimensional simply connected nilpotent Lie group, where
   $\tilde{\nabla}_{i,x_0}$ denotes the restriction of
   $\tilde{\nabla}_i$ to $p^{-1}(x_0)$, and the group structure is
   defined by regarding the $\tilde{\nabla}_{i,x_0}$-parallel vector
   fields as \emph{left} invariant vector fields and $(x_0,o)\in p^{-1}(x_0)$
   as the base point.
   \item there are discrete sub-groups $\Gamma_i\le Aff(N_{i})$ that are
   finite extensions of cocompact lattice subgroups $L_i\le N_i$, which acts
   on $p^{-1}(x_0)$ by \emph{left} translations. Moreover, $\tilde{g}_{i}$
   is invariant under the action of $\Gamma_i$. 
   \item the $\Gamma_i$ action on $N_i$ trivially extends to $W_{x_0}$ in view
   of its product structure, and $G_{x_0}$ acts freely on $W_{x_0}$ in a
   way preserving the $\Gamma_i$ orbits; moreover, the quotient maps 
   $\tilde{q}_i:W_{x_0}\to W_{x_0}\slash \Gamma_i$ and $\hat{q}_i:W_{x_0}\slash
   \Gamma_i\to (W_{x_0}\slash \Gamma_i)\slash G_{x_0}$ induce a
   homeomorphic $\Psi(d_{GH}(M_i,X))$ Gromov-Hausdorff approximation between
   $\hat{q}_i(\tilde{q}_i(W_{x_0}))$ and $f_i^{-1}(U_{x_0}) \subset M_i$, when
   $W_{x_0}$ is equipped with $\tilde{g}_{i}$.
 \end{enumerate}
 Moreover, as $i\to \infty$, we get a limit metric $\tilde{g}_{\infty}$ and a
 limit connection $\tilde{\nabla}_{\infty}$ on $W_{x_0}$, to which 
 $\{\tilde{g}_{i}\}$ and $\{\tilde{\nabla}_i\}$ sub-converges,
 respectively, in the $C^{\infty}_{loc}(W_{x_0})$ topology. Consequently, the
 limit simply connected $(m-n)$-dimensional nilpotent Lie group
 $N_{\infty}=\left(p^{-1}(x_0), \tilde{\nabla}_{\infty,x_0},(x_0,o)\right)$
 acts on $(W_{x_0}, \tilde{g}_{\infty})$ by isometric \emph{left} translations,
 making the projection $p: (W_{x_0},\tilde{g}_{\infty})\twoheadrightarrow
 (V_{x_0},\hat{g}_X)$ a Riemannian submersion.
 \end{theorem}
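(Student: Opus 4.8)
The plan is to obtain the unwrapped picture by combining Fukaya's fibration theorem \cite{Fukaya88, Fukaya89} with the collapsing-with-bounded-curvature theory of Cheeger, Fukaya and Gromov \cite{CFG92}, and then to extract $\tilde{g}_{\infty}$ and $\tilde{\nabla}_{\infty}$ by a Cheeger--Gromov compactness argument on local universal covers, where the regularity hypothesis (\ref{eqn: regularity}) restores a uniform injectivity radius lower bound that $\{(M_i,g_i)\}$ itself lacks.

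First I would fix $x_0\in \tilde{\mathcal{R}}$ and quote Fukaya's local description: for a suitable small neighborhood $U_{x_0}\subset \tilde{\mathcal{R}}$ there is an open $V_{x_0}\subset \mathbb{R}^n$ with a $G_{x_0}$-invariant metric $\hat{g}_X$, where $G_{x_0}$ is the finite isotropy group of $x_0$, so that $(U_{x_0},d)\equiv (V_{x_0},\hat{g}_X)\slash G_{x_0}$, and for all large $i$ the restriction $f_i^{-1}(U_{x_0})\to U_{x_0}$ of the singular fibration is, after a finite cover, an infranilmanifold bundle whose structure is governed by the nilpotent Killing structure of \cite{CFG92}. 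Unwrapping the infranil fibers and lifting through the orbifold chart, set $W_{x_0}:=V_{x_0}\times \mathbb{R}^{m-n}$ with projection $p$; the lifted metrics $\tilde{g}_i$ are $G_{x_0}$-invariant, and the nilpotent Killing structure supplies the $G_{x_0}$-equivariant canonical connections $\tilde{\nabla}_i$. The curvature bound in item (1) is then the rescaled pullback of (\ref{eqn: regularity}) at the fixed scale $r_{x_0}$, while $\sup_{W_{x_0}}|\nabla^l(\tilde{\nabla}_i-\tilde{\nabla}_i^{LC})|\le C_l r_{x_0}^{-2-l}d_{GH}(M_i,X)$ is precisely the \cite{CFG92} statement that the canonical nilpotent connection differs from Levi--Civita by an amount controlled by the collapsing scale, which on $f_i^{-1}(U_{x_0})$ is comparable to $d_{GH}(M_i,X)$. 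Items (2)--(3) record the group-theoretic content of the nilpotent Killing structure: the $\tilde{\nabla}_{i,x_0}$-parallel vector fields on $p^{-1}(x_0)$ generate a simply connected nilpotent Lie group $N_i$, and the fiber's deck group is a finite extension $\Gamma_i\le \mathrm{Aff}(N_i)$ of a cocompact lattice $L_i\le N_i$ acting by left translations and preserving $\tilde{g}_i$; item (4) is the compatibility of the $G_{x_0}$-action with the product splitting (freeness being arranged by letting $G_{x_0}$ rotate the $\mathbb{R}^{m-n}$ factor as well) together with the Gromov--Hausdorff approximation between the iterated quotient and $f_i^{-1}(U_{x_0})$ that is built into Fukaya's theorem.

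For the limit, the decisive point is that on $W_{x_0}$ the family $\{\tilde{g}_i\}$ has uniformly bounded geometry: all covariant derivatives of curvature are bounded by item (1), and the injectivity radius at points of $p^{-1}(x_0)$ is bounded below uniformly, since the short geodesic loops responsible for the collapse of $g_i$ are exactly the ones unwound by passing to the universal cover of the fiber, so a local injectivity radius estimate of Cheeger--Gromov--Taylor type \cite{CGT82} applies at scale $r_{x_0}$. Hence, by the Cheeger--Gromov compactness theorem (cf. \cite{Hamilton95}), a subsequence of $\{\tilde{g}_i\}$ converges in $C^{\infty}_{loc}(W_{x_0})$ to a smooth metric $\tilde{g}_{\infty}$; since the $\tilde{\nabla}_i$ are uniformly bounded in every $C^l$ and differ from the (converging) Levi--Civita connections by $O(d_{GH}(M_i,X))\to 0$, they converge in $C^{\infty}_{loc}$ to a connection $\tilde{\nabla}_{\infty}$ which agrees with the Levi--Civita connection of $\tilde{g}_{\infty}$ along the fibers. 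The $\tilde{\nabla}_i$-parallel frames converge, so $N_{\infty}=(p^{-1}(x_0),\tilde{\nabla}_{\infty,x_0},(x_0,o))$ is again a simply connected nilpotent Lie group acting on $W_{x_0}$ by left translations; moreover $\tilde{g}_{\infty}$ is $N_{\infty}$-invariant, because $\Gamma_i$-invariance of $\tilde{g}_i$ together with the increasing density of $L_i$ in $N_i$ as the fibers shrink forces invariance under the full limit group $N_{\infty}$. Finally, $N_{\infty}$-invariance of $\tilde{g}_{\infty}$, together with the fact that the fibers of $p$ are exactly the $N_{\infty}$-orbits, yields that $p:(W_{x_0},\tilde{g}_{\infty})\twoheadrightarrow (V_{x_0},\hat{g}_X)$ is a Riemannian submersion, with $\hat{g}_X$ identified as the quotient metric (the limit of the orbit-space metrics of $\tilde{g}_i$).

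The main obstacle, and where essentially all of the work sits, is the construction above of the $G_{x_0}$-equivariant pair $(\tilde{g}_i,\tilde{\nabla}_i)$ with the \emph{quantitative} control in item (1): this requires importing the full smoothing and nilpotent-Killing-structure machinery of \cite{CFG92}, carefully tracking how the finite group $G_{x_0}$ sits compatibly inside every stage of that construction, and verifying that the scale governing the canonical-connection-versus-Levi--Civita estimate is comparable to $d_{GH}(M_i,X)$ on the chosen neighborhood rather than merely to the intrinsic injectivity radius. By contrast, once this equivariant bounded-geometry package is in hand, the injectivity radius lower bound on $W_{x_0}$ and the extraction and identification of the limit $(\tilde{g}_{\infty},\tilde{\nabla}_{\infty})$ are routine. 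One should also keep in mind that the generic possibility $\tilde{\mathcal{S}}\neq \emptyset$ (Remark \ref{rmk: S_nonempty}, \cite[Example 1.7]{CG86}) plays no role here, since we work only over $\tilde{\mathcal{R}}$, where the isotropy groups are finite.
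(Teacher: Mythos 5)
Your proposal follows the same broad strategy as the paper --- Fukaya's orbifold-chart description of $(U_{x_0},V_{x_0},G_{x_0},\hat g_X)$, the CFG92 nilpotent Killing structure to produce the fiberwise-flat connection and its comparison to Levi--Civita, unwrapping the infranil fibers to restore injectivity radius, and then passing to a limit --- and you correctly locate most of the technical weight in tracking $G_{x_0}$-equivariance through the CFG92 machinery and in getting the $\delta_i$-scale estimate $|\nabla^l(\tilde\nabla_i-\tilde\nabla^{LC}_i)|\le C_l r_{x_0}^{-2-l}d_{GH}(M_i,X)$.

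The genuine gap is in how you obtain convergence in $C^{\infty}_{loc}(W_{x_0})$ on the \emph{fixed} product manifold. You write that, given bounded geometry and a local injectivity radius bound, ``by the Cheeger--Gromov compactness theorem\ldots a subsequence of $\{\tilde g_i\}$ converges in $C^{\infty}_{loc}(W_{x_0})$.'' But Cheeger--Gromov compactness delivers convergence only after composing with a sequence of diffeomorphisms, which a priori have no reason to respect the product splitting $V_{x_0}\times\mathbb{R}^{m-n}$, the projection $p$, or the $G_{x_0}$-action --- and the theorem requires precisely that the $\tilde g_i$ and $\tilde\nabla_i$ all sit as tensors on \emph{one and the same} $W_{x_0}$, with $p$ itself becoming the limit Riemannian submersion. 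Moreover, the $\tilde V_{x_0,i}$ are different manifolds for different $i$, and ``the lifted metrics $\tilde g_i$'' do not come automatically defined on $W_{x_0}$: one must choose identifications. The paper's \S 3.2 exists to close exactly this gap. One first fixes a uniformly controlled local section $s_i:V_{x_0}\to\hat V_{x_0,i}$ built from exponential maps (so it satisfies (\ref{eqn: s_regularity})); Malcev rigidity (\cite[Theorem 3.7]{CFG92}) then gives unique affine identifications $\psi_{\hat x,i}$ of the universal covers of the fibers, and these assemble into a trivialization $\phi_{x_0,i}:W_{x_0}\to\tilde V_{x_0,i}$ compatible with the product structure. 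Pulling back $\tilde g_{x_0,i}$ and $\tilde\nabla_i^{\ast}$ through $\phi_{x_0,i}$ produces tensors on the fixed $W_{x_0}$ whose coefficients are uniformly bounded in every $C^l$ (estimates (\ref{eqn: gpullback_regularity}) and (\ref{eqn: nabla_pullback_regularity})), so that Arzel\`a--Ascoli applies directly without any auxiliary diffeomorphisms, and the limit automatically retains $p$, the $N$-orbit structure, and the $G_{x_0}$-action. Absent such a controlled trivialization, the ``$C^{\infty}_{loc}(W_{x_0})$'' conclusion is not what Cheeger--Gromov compactness hands you. A minor further point: since $\tilde\nabla_i-\tilde\nabla_i^{LC}=O(\delta_i)\to 0$ in every $C^l$ norm on all of $W_{x_0}$, the limit connection coincides with the Levi--Civita connection of $\tilde g_\infty$ everywhere, not merely along the fibers as you state.
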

 
 This theorem, to be discussed in more detail in \S 3, is well known to
 experts --- see \cite[Theorem 0.5]{Fukaya88}, \cite[Theorem 2.1]{NaberTian4d}
 and \cite[Theorem 1.1]{NaberTian} for similar constructions. We record it here
 mostly for the convenience of our discussion in the current paper and claim no
 originality. Different from the above mentioned results, Theorem \ref{thm:
 canonical_nbhd} focuses around orbifold points, and provides a direct
 description of the local collapsing struture without involving the frame
 bundle argument.
 
 Recall that our main goal of proving Theorem \ref{thm: main1} is to rule out
 the possible existence of $\tilde{\mathcal{S}}$. At this point, let us mention
 another characterization of the corner singularities
 $\tilde{\mathcal{S}}\subset X$. In \cite{Fukaya87}, a notion of measured
 Gromov-Hausdorff topology has been defined, and it was shown that the metric
 measure spaces $\left\{(M_i,g_i, |M_i|_{g_i}^{-1}\dvol_{g_i})\right\}$
 sub-converges in the measured Gromov-Hausdorff topology to the metric measure
 space $(X,d,\dmu_X)$, where $\dmu_X$ is absolutely continuous with respect to
 the natural measure induced by $d$. The density function $\chi_X$ is defined,
 for any $x\in \mathcal{R}$, as
 \begin{align}\label{eqn: chi_X_regular}
 \chi_X(x)\ :=\ \lim_{i\to
 \infty}\frac{|f_i^{-1}(x)|_{g_i}}{|M_i|_{g_i}},
 \end{align}
and is extended continuously throughout
 $X$. The corner singularities are then characterized by the zero locus of
 $\chi_X$:  $\tilde{\mathcal{S}}=\chi_X^{-1}(0)$. Heuristically, we can think of
 $\chi_X$ as the asymptotic relative volume distribution of the $f_i$ fibers, 
 and $\chi_X$ vanishes on $\tilde{\mathcal{S}}$ because the fibers over corner
 singularities are of lower dimensions, compared to those over the orbifold
 points; see \cite[Theorem 0.12]{Fukaya88}.
 
 Now fixing $x\in \tilde{\mathcal{R}}$ and letting $W_x$ be constructed in
 Theorem~\ref{thm: canonical_nbhd}, it can be shown, following the same
 argument as in \cite[Lemma 2-5]{Fukaya89}, that $\chi_X$ is actually a
 constant multiple of $\sqrt{\det G}$ over $W_x$; here $G$ is the restriction
 to the $p$ fibers of the limit metric $\tilde{g}_{\infty}$.
 Ideally, in the setting of Theorem \ref{thm: main1},
 since the limit metric is a consequence of the Ricci flow evolution, we expect
 that $\tilde{g}_{\infty}$ to satisfy the gradient steady Ricci
 soliton equation. Suppose for now, that this is indeed the case and let
 $u_{\infty}$ denote the potential function, then on $W_x$ we have the following
 inequality for $\ln \det G$, via O'Neill's formula (see the works \cite{Lott03,
 Lott10} of Lott where such argument originates):
 \begin{align}\label{eqn: fiber_volume_heat}
\Delta^{\perp}_{\tilde{g}_{\infty}}\ln \det G+\frac{1}{2}|\nabla^{\perp} \ln
\det G|_{\tilde{g}_{\infty}}^2-\tilde{g}_{\infty}(\nabla^{\perp} \ln \det
G,\nabla^{\perp} \ln u_{\infty})\ =\ 2\Sc_{G}.
 \end{align}
 Here the derivatives are taken in the directions perpendicular to the fibers of
 $p$, $u_{\infty}$ is constant along the $p$ fibers, and $\Sc_G$
 is the scalar curvature of the fiber metrics $G$.
 Notice that by the constancy of the quantities involved along the $p$
 fibers, and the way we take derivatives, (\ref{eqn: fiber_volume_heat})
 descends to an elliptic equation on $V_x$.
 
 In order to proceed, we further assume for the moment that $N$ is abelian, so
 that $G$ is flat, and the above elliptic equation makes $\ln\det G$ a $\ln
 (\det G)^{-\frac{1}{2}}u_{\infty}$-harmonic function on $V_x$.
 Now we rely on the characterization of $\tilde{\mathcal{S}}$ as the zero locus
 of the non-negative continuous function $\chi_X$ to locate a global maximum
 point of $\chi_X$ within $\mathcal{\tilde{R}}$. Therefore, a maximum principle
 argument around the maximum point $x_0\in \tilde{\mathcal{R}}$ of $\chi_X$ ---
 which furnishes a local maximum of $\ln \det G$ in $V_{x_0}$  --- will lead to
 a contradiction to (\ref{eqn: fiber_volume_heat}) unless $\chi_X$ is constant
 on $V_{x_0}$. But if $\chi_X$ is locally constant on $\tilde{\mathcal{R}}$,
 then by the continuity of $\chi_X$, it has to be a positive constant
 throughout $X$, whence the vacancy of $\tilde{\mathcal{S}}$; see \S 6.2 for
 more details. Once this is shown, we know that $X=\tilde{\mathcal{R}}$ is
 actually a compact Riemannian orbifold --- this is exactly what we hope to
 achieve through Theorem \ref{thm: main1}.
 
 The maximum principle argument we just outlined is originally due to Naber and
 Tian in the proof of \cite[Theorem 1.2]{NaberTian}. In \cite{NaberTian}, an
 $N^{\ast}$-structure has been globally constructed out of the frame bundles
 $\{(FM_i,\bar{g}_i)\}$, where $\bar{g}_i$ is the $O(m)$ invariant metric
 canonically associated with $g_i$. By \cite[Theorem 6.1]{Fukaya88}, we know
 that the collapsing limit is a Riemannian manifold $(Y,g_Y)$ on which $O(m)$
 acts by isometries. Moreover, the collapsing singular fibrations $f_i:M_i\to
 X$ (see \cite[Theorem 0.12]{Fukaya88}) induce corresponding $O(m)$ equivariant
 collapsing fiber bundles $\bar{f}_i:FM_i\to Y$ with fibers being nilmanifolds.
 Therefore the local construction in Theorem~\ref{thm: canonical_nbhd} can be
 extended all over $Y$, and O'Neill's formula for the Ricci curvature of the
 corresponding limit metric can be applied to analyse the global $O(m)$
 equivariant Riemannian submersion structure --- provided that there has
 already been an elliptic equation for the Ricci curvature of the limit metric
 --- this is indeed the case for \cite[Theorem 1.2]{NaberTian}, as the
 collapsing manifolds $\{(M_i,g_i)\}$, to begin with, are assumed to be Ricci
 flat.
 
 In the setting of Theorem \ref{thm: main1}, however, we do not have any
 elliptic equation concerning the Ricci curvature ready at hand; but rather the
 expected elliptic equation is due to the long-time evolution of the Ricci
 flow. We therefore need to adopt the concept of measured Gromov-Hausdorff
 convergence in \cite{Fukaya87} and prove integral convergence results in \S 4,
 to be able to extract a limit gradient steady Ricci soliton
 metric on the unwrapped neighborhoods defined in Theorem~\ref{thm:
 canonical_nbhd}. The main theorem proven in \S 4 is the following:
 \begin{theorem}[Collapsing and convergence of integrals]\label{thm: integral}
 Assume that a sequence $\{(M_i,g_i)\}$ of Riemannian $m$-manifolds, satisfying
 (\ref{eqn: regularity}) and a uniform diameter bound, collapses to $(X,d)$.
 Suppose that there are functions $\rho_i\in C^1(M_i)$ satisfying
 \begin{align*}
 \sup_{M_i} \left|\ln (\rho_i|M_i|_{g_i})\right|\ \le\ \ln C,\quad
 \text{and}\quad \sup_{M_i}|\nabla \rho_i||M_i|_{g_i}\ \le\ C,
 \end{align*}
 and $w_i\in C^k(M_i)$ satisfying $\|w_i\|_{C^k(M_i,g_i)}\le C$, then
 there are continuous functions $\rho_X:X\to [C^{-1},C]$ and $w_{X}$ on $X$,
 such that
 \begin{align*}
 \lim_{i\to \infty} \int_{M_i}w_i\ \rho_i\ \dvol_{g_i}\ =\ \int_Xw_X\ \rho_X\
 \dmu_X.
 \end{align*}
  Moreover, $\forall x_0\in \tilde{\mathcal{R}}$, let $U_{x_0}$, $V_{x_0}$ and
  $W_{x_0}$ be the corresponding neighborhoods in Theorem~\ref{thm:
  canonical_nbhd}, then there is some $w_{\infty}\in
  C_{loc}^{k-1,\alpha}(W_{x_0})$, such that $\lim_{i\to \infty} \tilde{w}_i=
  w_{\infty}$ in the $C^{k-1,1}(W_{x_0})$ topology, where we define
  $\tilde{w}_i:= \tilde{q}_i^{\ast} \hat{q}_i^{\ast} \left(
  w_i|_{f_i^{-1}(U_{x_0})} \right)$ as the pull-back of $w_i$ from
  $f_i^{-1}(U_{x_0})\subset M_i$ to the covering space $W_{x_0}$; furthermore,
  $w_{\infty}$ is constant along the $p$ fibers, and
  $w_{\infty}=p^{\ast}w_X$ on $V_{x_0}$.
 \end{theorem}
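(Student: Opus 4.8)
The plan is to split the statement into the global integral identity and the local convergence on the unwrapped neighborhoods, deriving the former from Fukaya's measured Gromov--Hausdorff theory \cite{Fukaya87} and the latter from the compactness already built into Theorem~\ref{thm: canonical_nbhd}. First I would normalize the weights: with $\sigma_i:=\rho_i|M_i|_{g_i}$ and $\dmu_i:=|M_i|_{g_i}^{-1}\dvol_{g_i}$, the hypotheses become $C^{-1}\le\sigma_i\le C$ and $|\nabla\sigma_i|_{g_i}\le C$, while $\|w_i\|_{C^k(M_i,g_i)}\le C$ (with $k\ge 1$) gives $|w_i|\le C$ and $|\nabla w_i|_{g_i}\le C$, and the integral in question becomes $\int_{M_i}w_i\,\sigma_i\,\dmu_i$; thus $\{w_i\}$ and $\{\sigma_i\}$ are uniformly bounded and uniformly $C$-Lipschitz. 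By \cite{Fukaya87} (recalled in \S 2.1), after passing to a subsequence $(M_i,g_i,\dmu_i)$ converges in the measured Gromov--Hausdorff topology to $(X,d,\dmu_X)$; fix $\epsilon_i$-Gromov--Hausdorff approximations $\phi_i\colon M_i\to X$ and $\psi_i\colon X\to M_i$ with $d(p,\psi_i\phi_i(p))\le\epsilon_i$. Then $w_i\circ\psi_i$ and $\sigma_i\circ\psi_i$ are uniformly bounded and, up to an $O(\epsilon_i)$ error, uniformly $C$-Lipschitz on the compact space $X$, so by Arzel\`a--Ascoli a further subsequence converges uniformly to $C$-Lipschitz limits $w_X$ and $\rho_X$ on $X$, with $\rho_X\in[C^{-1},C]$; moreover $\sup_{M_i}|w_i-w_X\circ\phi_i|\to 0$ and $\sup_{M_i}|\sigma_i-\rho_X\circ\phi_i|\to 0$ by the displayed distortion estimate for $\phi_i$. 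Since $w_X\rho_X$ is bounded and continuous and $(\phi_i)_\ast\dmu_i\to\dmu_X$ weakly-$\ast$, a change of variables then gives $\int_{M_i}w_i\sigma_i\,\dmu_i=\int_{M_i}(w_X\circ\phi_i)(\rho_X\circ\phi_i)\,\dmu_i+o(1)\to\int_X w_X\rho_X\,\dmu_X$, which is the first assertion.

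Second, for the local statement I would fix $x_0\in\tilde{\mathcal{R}}$ and pass to $W_{x_0}$ as in Theorem~\ref{thm: canonical_nbhd}. The uniform $C^k$-bound on $w_i$ transfers, through the controlled covering structure of that theorem (the maps $\tilde q_i,\hat q_i$ being local isometries onto $f_i^{-1}(U_{x_0})$ up to $\Psi(d_{GH}(M_i,X))$-errors), to a uniform bound $\|\tilde w_i\|_{C^k(W_{x_0},\tilde g_i)}\le C'$. Since $\tilde g_i\to\tilde g_\infty$ in $C^\infty_{loc}(W_{x_0})$ with a uniform injectivity-radius lower bound (Theorem~\ref{thm: canonical_nbhd}), the $C^k(\cdot,\tilde g_i)$ and $C^k(\cdot,\tilde g_\infty)$ norms are uniformly comparable on compact subsets, and Arzel\`a--Ascoli yields, along a further subsequence, a limit $w_\infty\in C^{k-1,\alpha}_{loc}(W_{x_0})$ with $\tilde w_i\to w_\infty$ in $C^{k-1,\alpha}_{loc}$ for every $\alpha\in(0,1)$ and with $(k-1)$-jet Lipschitz. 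By construction each $\tilde w_i$ is invariant under $\Gamma_i$ and under $G_{x_0}$, so $w_\infty$ is $G_{x_0}$-invariant; it remains to show it is constant along the $p$-fibers.

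This fiber-constancy is the crux. Each $\tilde w_i$ is invariant under the cocompact lattice $L_i\le N_i$, which acts on $W_{x_0}=V_{x_0}\times\mathbb{R}^{m-n}$ through the second factor; because the $f_i$-fibers collapse to points, a fundamental domain for $L_i$ in $N_i\cong p^{-1}(x_0)$ has $\tilde g_i$-diameter $\delta_i\to 0$, hence (as $\tilde g_i\to\tilde g_\infty$) Euclidean diameter $\to 0$. Given $g\in N_\infty$ and $y\in W_{x_0}$, pick $\gamma_i\in L_i$ with $\gamma_i\to g$ as points of $\mathbb{R}^{m-n}$; using $\tilde\nabla_i\to\tilde\nabla_\infty$ --- so that the $N_i$ left-translations, defined via $\tilde\nabla_i$-parallel frames, converge to the $N_\infty$ left-translation --- one gets $\gamma_i\cdot y\to g\cdot y$, whence $w_\infty(g\cdot y)=\lim_i\tilde w_i(\gamma_i\cdot y)=\lim_i\tilde w_i(y)=w_\infty(y)$. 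Thus $w_\infty$ is invariant under the isometric left $N_\infty$-action of Theorem~\ref{thm: canonical_nbhd}, i.e.\ constant along the $p$-fibers, and being also $G_{x_0}$-invariant it descends through $p$ to a $G_{x_0}$-invariant function on $V_{x_0}$. Finally, for $x'\in U_{x_0}$ and $v'\in q_{x_0}^{-1}(x')$, the value of this descended function at $v'$ equals $\lim_i\tilde w_i|_{p^{-1}(v')}=\lim_i w_i|_{f_i^{-1}(x')}=w_X(x')$; this identifies $w_\infty$ with the pull-back of $w_X$, i.e.\ $w_\infty=p^\ast w_X$ on $V_{x_0}$, and shows the local limit agrees with the global $w_X$ of the first step.

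The main obstacle is exactly the construction in the previous paragraph: it rests on (i) the lattices $L_i$ becoming $\delta_i$-dense in $N_i$ with $\delta_i\to 0$, which uses that the singular fibration $f_i$ collapses along \emph{all} fiber directions rather than a lower-dimensional collapse, and (ii) the convergence $N_i\to N_\infty$ of the group operations furnished by $\tilde\nabla_i\to\tilde\nabla_\infty$, which is what lets one pass the lattice invariance to the limit. The remaining ingredients --- the normalization, the $C^0$-compactness of $\{w_i\}$ and $\{\sigma_i\}$, the weak-$\ast$ convergence of the normalized measures, and the $C^{k-1}$-compactness of $\{\tilde w_i\}$ on $W_{x_0}$ --- are routine given Theorem~\ref{thm: canonical_nbhd} and \cite{Fukaya87}.
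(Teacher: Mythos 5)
Your proposal is correct but takes a genuinely different route from the paper's proof (Propositions 4.1--4.3) in two essential places.

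For the global integral identity, you transport $w_i$ and the normalized density $\sigma_i$ directly to $X$ via Gromov--Hausdorff approximations $\psi_i$, extract $w_X$ and $\rho_X$ by a (quantitative) Arzel\`a--Ascoli argument, and close the loop with the weak-$\ast$ convergence $(\phi_i)_\ast\dmu_i\to\dmu_X$ from Fukaya's measured GH theorem. The paper instead proves this in two steps: Proposition 4.1 uses the coarea formula and the estimate $||J_n(f_i)|-1|\le\Psi(\delta_i)$ to produce $\rho_X$ as a limit fiber-average over $f_i^{-1}(x)$, and Proposition 4.2 handles $w_i$ similarly, with both arguments lifted to the $O(m)$-frame bundle $FM_i\to Y$ to cover the singular set of $X$. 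Your route is more elementary and avoids any explicit frame-bundle manipulation; the paper's heavier route produces the explicit fiber-average representation of $w_X$ and $\rho_X$, which the paper uses again (e.g., in establishing (4.12)). Both give the same continuous functions up to the usual subsequence ambiguity.

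For fiber-constancy of $w_\infty$, you argue via lattice density: $\tilde w_i$ is $L_i$-invariant by construction, the cocompact lattices $L_i\le N_i$ become $C\delta_i$-dense in the group metric (because the infranil fibers collapse), and the group operations converge since $\tilde\nabla_i\to\tilde\nabla_\infty$; hence $w_\infty$ inherits invariance under the full $N_\infty$-action. The paper (Proposition 4.3) instead shows the \emph{fiber-tangential derivative} of $w_i$ is $\Psi(\delta_i)$, by the argument of Fukaya's Key Lemma: one integrates $\nabla w_i$ along a unit-speed geodesic $c(t)=\exp_z tv$ with $v$ tangent to the fiber, which after a uniform time $t_0$ returns to within $10\delta_i$ of $z$, forcing $|\langle\nabla w_i,v\rangle|\le C\delta_i$; this passes to $|\tilde\nabla^T\tilde w_i|\le\Psi(\delta_i)$ on the covers and thence to fiber-constancy of the limit. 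The two arguments encode the same geometric fact (the fiber is a closed, shrinking manifold), but yours exploits the group-theoretic structure directly and is arguably more transparent, while the paper's derivative estimate is a local statement that does not need the full trivialization of the deck action. Both are valid. The remaining steps of your proposal (the $C^k$ transfer through the covering maps, the $C^{k-1,\alpha}_{loc}$ compactness on $W_{x_0}$, and the identification of $w_\infty$ with $p^\ast q_{x_0}^\ast w_X$ via the vanishing oscillation of $w_i$ on the shrinking fibers) match the paper's in substance.
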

 
 We will then rely on the asymptotic vanishing of the derivatives of Perelman's
 $\mathcal{F}$-functioanl \cite{Perelman} and Feldman-Ilmanen-Ni's
 $\mathcal{W}_+$-functional \cite{FIN05} to obtain a limit gradient steady Ricci
 soliton metrics on the unwrapped neighborhoods around the orbifold points in
 $X$; see \S 2.3 and \S 6.1 for more details. Here we emphasize that as pointed
 out in \cite[Page 494]{Lott10}, the induced flow (by the Ricci flow on the
 manifold) on the frame bundle is compliated, let along the evolution of the
 induced functionals. Therefore, compared to the $N^{\ast}$-structure
 constructed in \cite{NaberTian}, the unwrapped neighborhoods obtained in \S 3
 and the integral convergence results in \S 4 better adapt to the setting of
 collapsing and Ricci flows.
 
 Beware, however, that even if we have obtained a gradient steady 
 Ricci soliton limit metric to locally write down an elliptic equation like
 (\ref{eqn: fiber_volume_heat}), we still need to face its possibly negative
 right-hand side, which invalidates the maximum principle argument: in fact, by
 \cite[Theorem 3.1]{Milnor76}, we know that any non-flat left invariant metric
 along the $p$ fibers will have negative scalar curvature, whence the
 negativity of $\Sc_G$, and such metric cannot be flat unless the underlying
 Lie group is abelian.
 
 On the other hand, by \cite[(0.13.2)]{Fukaya88} and (\ref{eqn: chi_X_regular})
 we understand, roughly speaking, that the vanishing of $\chi_X$ at any 
 $x\in\tilde{\mathcal{S}}$ is due to the fact that the singular fiber
 $f_i^{-1}(x)$ is a lower dimensional quotient of the model fibers $F_i$, since
 $f_i^{-1}(x)\approx F_i\slash G_x$ and $G_x$ is of positive dimension.
 Moreover, since a key feature of $G_x$ is that its Lie algebra is contained
 in the center of the Lie algebra of $N_i$ (see \cite[Lemma 5.1]{Fukaya88}), we
 know that the vanishing of $\chi_X$ is caused by the degeneration of the torus
 orbits $\mathbb{T}_i\subset F_i$ as we take quotient of the $G_x^0$ (the
 identity component of $G_x$) action. The importance of understanding these
 torus orbits is also highlighted through the study of the $F$-structure in a
 series of works by Cheeger, Gromov, Rong and others; see, for instance, 
 \cite{CG86,CG90,CR96,CCR01}.
 
 Notice that each torus orbit in any $f_i$ fiber is a sub-manifold
 of $M_i$, and we would wonder if there is another density function defined
 on $X$, in a way similar to (\ref{eqn: chi_X_regular}), that describes the
 limit relative volume distribution of those torus orbits over the collapsing 
 limit space. Such density function should also characterize
 $\tilde{\mathcal{S}}$ as its zero locus, by the same reasoning that implies
 $\tilde{\mathcal{S}} = \chi_X^{-1}(0)$. This is indeed the case, and in \S 5 we
 will prove the following 
 \begin{theorem}[Limit central density]\label{thm: limit_central_density} 
 Assume that a sequence $\{(M_i,g_i)\}$ of Riemannian $m$-manifolds, satisfying
 (\ref{eqn: regularity}) and a uniform diameter bound, collapses to $(X,d)$.
 Then there is a non-negative continuous function $\chi_C:X\to [0,\infty)$,
 such that $\tilde{\mathcal{S}} =\chi_C^{-1}(0)$.
 
 Moreover, $\forall x_0\in \tilde{\mathcal{R}}$, let $V_{x_0}$ and $W_{x_0}$ be
 the neighborhoods constructed in Theorem~\ref{thm: canonical_nbhd}, then
 there is a commutative family of Killing vector vector fields $X_1, \ldots,
 X_{k_0}$ on $W_{x_0}$, tangent to the $p$ fibers, such that
 $q_{x_0}^{\ast}\ \chi_C$ is a constant (only depending on $x_0$) multiple of
 $|X_1\wedge \cdots \wedge X_{k_0}|_{\tilde{g}_{\infty}}$ on $V_{x_0}$.
 \end{theorem}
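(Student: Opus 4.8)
The plan is to construct $\chi_C$ locally on each unwrapped neighborhood first, and then verify that the local definitions patch together into a globally defined continuous function on $X$. Fix $x_0 \in \tilde{\mathcal{R}}$ and let $W_{x_0} = V_{x_0} \times \mathbb{R}^{m-n}$, with limit metric $\tilde{g}_\infty$ and limit nilpotent Lie group $N_\infty$ acting by isometric left translations, as furnished by Theorem~\ref{thm: canonical_nbhd}. The first step is to identify the relevant torus orbits at the level of the limit: the isotropy group $G_{x}$ of a point $x \in \tilde{\mathcal{S}}$ has Lie algebra contained in the center $\mathfrak{z}$ of the Lie algebra $\mathfrak{n}_\infty$ of $N_\infty$ (by \cite[Lemma 5.1]{Fukaya88}), and the degeneration of the corresponding torus orbits in $F_i$ is what forces $\chi_X$ to vanish. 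Accordingly I would take $k_0 := \dim \mathfrak{z}$ and choose a basis $\xi_1, \ldots, \xi_{k_0}$ of $\mathfrak{z}$; since central elements of $\mathfrak{n}_\infty$ are simultaneously left- and right-invariant, the associated vector fields $X_1, \ldots, X_{k_0}$ on $W_{x_0}$ (extended trivially over the $V_{x_0}$ factor) are Killing for $\tilde{g}_\infty$, commute, and are tangent to the $p$ fibers. This handles the "moreover" part of the statement, with the local formula $q_{x_0}^\ast \chi_C := c_{x_0}\, |X_1 \wedge \cdots \wedge X_{k_0}|_{\tilde{g}_\infty}$ taken as the \emph{definition} of $\chi_C$ on $V_{x_0}$.

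The second step is to show this local construction is consistent — that is, independent of the choice of basis of $\mathfrak{z}$ (immediate up to a positive constant absorbed into $c_{x_0}$), $G_{x_0}$-invariant so that it descends to $U_{x_0} \subset X$, and compatible on overlaps $U_{x_0} \cap U_{x_1}$. For the overlap compatibility I would argue exactly as in the treatment of $\chi_X$ following \cite[Lemma 2-5]{Fukaya89}: both the fiber structures and the center of the nilpotent structure are canonically determined by the Gromov-Hausdorff limit data, so on overlaps the wedge-norm densities must agree up to the ratio of the normalizing constants, and one fixes the constants once and for all by a single global normalization (e.g. using the measured-Gromov-Hausdorff limit measure $\dmu_X$ as in the definition of $\chi_X$, or by reference to $|M_i|_{g_i}^{-1}$-normalized volumes of the torus orbits). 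The analogue of (\ref{eqn: chi_X_regular}) here is
\begin{align*}
\chi_C(x) \ =\ \lim_{i\to\infty} \frac{\Vol_{g_i}\!\left(\mathbb{T}_i \cap f_i^{-1}(x)\right)}{\cdots},
\end{align*}
the relative volume of the torus orbit through a point of $f_i^{-1}(x)$, suitably normalized; making this limit precise and showing it matches the wedge-norm formula on $\tilde{\mathcal{R}}$ is the bookkeeping analogue of what Fukaya does for $\chi_X$, and I would model the argument on that.

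The third step is the extension across $\tilde{\mathcal{S}}$ together with the identity $\tilde{\mathcal{S}} = \chi_C^{-1}(0)$. On $\tilde{\mathcal{R}}$ the function is continuous and strictly positive by construction (the Killing fields $X_1,\ldots,X_{k_0}$ are linearly independent wherever the fiber is a full nilmanifold, so the wedge-norm does not vanish). One then extends $\chi_C$ continuously to all of $X$ — continuity at points of $\tilde{\mathcal{S}}$ follows from the structure theory: as $x' \to x \in \tilde{\mathcal{S}}$, the extrinsic size of the torus orbits collapses, forcing the wedge-norm density to $0$, exactly parallel to the reasoning that $\chi_X$ vanishes on $\tilde{\mathcal{S}}$ because the fibers over corner singularities drop dimension. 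Conversely, if $\chi_C(x) = 0$ with $x \in \tilde{\mathcal{R}}$, the torus orbit through $f_i^{-1}(x)$ would have to degenerate, contradicting the full-dimensionality of the nilmanifold fibers over orbifold points; hence $\chi_C^{-1}(0) \subseteq \tilde{\mathcal{S}}$, and combined with the previous sentence we get equality.

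I expect the main obstacle to be the overlap/consistency step: unlike $\chi_X$, which is pinned down by the single natural limiting measure $\dmu_X$, the quantity $\chi_C$ is defined via a choice of central vector fields on a \emph{covering}, and one must verify that the center $\mathfrak{z} \subset \mathfrak{n}_\infty$ and its associated family of torus orbits are genuinely well-defined on $X$ — i.e. that the nilpotent structures produced by Theorem~\ref{thm: canonical_nbhd} at different base points glue up to the automorphisms relating them, and that these automorphisms preserve the center (which they do, the center being characteristic) and act on $\Lambda^{k_0}\mathfrak{z}$ by scalars of controlled sign. Threading the $G_{x_0}$-equivariance through all of this, while keeping the normalizing constants coherent, is where the real work lies; the Killing property and tangency of the $X_j$'s, by contrast, are essentially formal once one knows central left-invariant fields on $N_\infty$ are bi-invariant and hence Killing for any left-invariant metric.
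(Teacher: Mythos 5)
Your proposal inverts the logical order of the paper's argument, and in doing so runs head-on into a difficulty that the paper explicitly identifies at the start of \S 5 and resolves by a different mechanism. You propose to \emph{define} $\chi_C$ locally on each unwrapped neighborhood as $c_{x_0}|X_1\wedge\cdots\wedge X_{k_0}|_{\tilde g_\infty}$ and then verify that these local formulas patch together and extend continuously (and by zero) across $\tilde{\mathcal{S}}$. But the paper opens \S 5 by observing precisely that this ``local central sub-bundle'' picture cannot be made global over $X$: to single out one torus orbit in each $f_i$-fiber one must choose a base point (a local section), and the corner singularities $\tilde{\mathcal{S}}$ obstruct any smooth consistent choice of such sections over all of $X$. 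You do flag this as ``the main obstacle,'' but your suggested resolution---normalizing constants coherently via a global reference measure---does not engage the actual issue, which is not the scaling constants but the section-dependence of the construction and, more fundamentally, the impossibility of running the local construction at all in a neighborhood of a corner singularity. Continuity at $\tilde{\mathcal{S}}$, together with the identity $\chi_C^{-1}(0)=\tilde{\mathcal{S}}$, are exactly the facts that your approach must establish directly from local data near a corner singularity, which is what the local picture does not supply.

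The paper's route is the opposite: $\chi_C$ is first defined globally and robustly as $\chi_X/\chi_Q$, where $\chi_Q$ is a ``quotient density'' built on the frame bundle $FM_i\to Y$. On $FM_i$ the nilmanifold fibration is regular (no corner singularities appear in $Y$, which is a smooth manifold), so the central torus $\mathbb{T}_i=C(N_i)/C(L_i)$ acts globally and the quotient collapsing sequence $(FM_i/\mathbb{T}_i,[\bar g_i^1])\to Y$ has bounded curvature once one uses the CFG invariant metric $\bar g_i^1$ and controls the second fundamental form of the $\mathbb{T}_i$-orbits; Fukaya's measured GH theory then gives a continuous, strictly positive $O(m)$-invariant density $\bar\chi_Q$ on $Y$, which descends to $\chi_Q>0$ on $X$. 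Since $\chi_X$ is continuous with zero locus $\tilde{\mathcal{S}}$ (Fukaya), the ratio $\chi_C=\chi_X/\chi_Q$ is automatically continuous, nonnegative, and has zero locus $\tilde{\mathcal{S}}$. Only \emph{after} this global definition does the paper construct the local central sub-bundle $\hat c_i$ around an orbifold point (this is where the section is used, but only locally and only over $\tilde{\mathcal{R}}$), show $q_{x_0}^*\chi_C = C_{5.2}(x_0)\hat\chi_C$ by a Fubini-type comparison on the frame bundle, and then prove $\hat\chi_C = C_{5.3}(x_0)\sqrt{\det H}$ via a fundamental-domain argument modeled on Fukaya's Lemma 2-5. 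The overlap consistency you worry about is a consequence of both being constant multiples of the one global $\chi_C$, so it never has to be argued directly. A secondary point: your $k_0$ should be $\dim Z$, where $Z$ is the accumulation of the centers $C(N_i)$; the paper only shows $Z\le C(N_\infty)$, so setting $k_0=\dim\mathfrak{z}(N_\infty)$ is not obviously correct and the corresponding Killing fields might overshoot the limit central distribution.
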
 

 \begin{remark}\label{rmk: local}
 Theorems \ref{thm: integral} and \ref{thm: limit_central_density} enjoy the
 following common flavor: the limit objects are robustly defined over the entire
 $X$ --- they are continuous but are of low regularity; however, around the 
 orbifold points, we can find very regular representations of these quantities
 on the unwrapped neighborhoods, as constructed in Theorem~\ref{thm:
 canonical_nbhd}.
 \end{remark}
 
 At this stage, the natural resolution to the issue of the possibly negative 
 right-hand side of (\ref{eqn: fiber_volume_heat}), as originally noticed in
 the proof of \cite[Theorem 1.2]{NaberTian}, is to focus on the leaves of the
 Riemannian foliation by the commuting Killing vector fields $X_1, \ldots,
 X_{k_0}$. These leaves are intrinsically flat and by applying the O'Neill's
 formula to $\ln |X_1\wedge \cdots \wedge X_{k_0}|_{\tilde{g}_{\infty}}^2$, we
 obtain an elliptic equation similar to (\ref{eqn: fiber_volume_heat}), but with
 vanishing right-hand side. Then we can argue via the maximum principle as
 before, to prove that $\chi_C$ is a positive constant across $X$, and rely on
 Theorem \ref{thm: limit_central_density} to rule out the possible existence of
 the corner singularity.
 
 The proof of Theorem \ref{thm: main2} utilizes the same set of tools: suppose
 $\limsup_{t\to\infty} t\boldsymbol{\mu}_+'(t)=0$ but the global volume ratio
 fails to see a uniform positive lower bound, then for any sequence
 $\{(M,t_i^{-1}g(t_i))\}$ realizing these numerical limits, we have the exact
 same setting as just discussed, except that on the right-hand side of
 (\ref{eqn: fiber_volume_heat}) there is an extra positive term $k_0$, as
 the result of a gradient expanding Ricci soliton metric on locally unwrapped
 neighborhoods (see Proposition~\ref{prop: expanding_soliton_metric}) --- but
 then we could deduce the constacy of $|X_1\wedge \cdots \wedge
 X_{k_0}|_{\tilde{g}_{\infty}}$ via the maximum principle argument, which will
 force $\rank \mathfrak{C}=k_0=0$, whence the non-existence of the
 $F$-structure caused by collapsing, a contradiction to the asymptotic
 degeneration of the global volume ratio; see \S 6.2 for more details.
 
 Besides the proofs of Theorems \ref{thm: main1} and \ref{thm: main2}, we
 believe that the structural results --- Theorems~\ref{thm: canonical_nbhd},
 \ref{thm: integral}, and \ref{thm: limit_central_density} --- will be useful in
 future studies on the metric measure properties of the collapsing limit and
 the collapsing procedure. Especially, in contrast to the global constructions
 carried out in \cite{Lott10, NaberTian}, Theorems \ref{thm: canonical_nbhd}, 
 \ref{thm: integral} and \ref{thm: limit_central_density} are local in nature,
 and should see wider applications; see \S 7.

 For the rest of the paper, we begin with discussing the necessary background on 
 the collapsing geometry and $\mathcal{W}_+$-functional in \S 2. With Theorems
 \ref{thm: canonical_nbhd},  \ref{thm: integral} and \ref{thm:
 limit_central_density} proven in \S 3, \S 4 and \S 5, respectively, we will
 then be ready to prove Theorems \ref{thm: main1} and \ref{thm: main2} in \S 6.
 We will finish the paper with a short proof of Rong's theorem \cite[Thoerem
 0.4]{Rong98} in \S 7, as an application of Theorems \ref{thm: canonical_nbhd}
 and \ref{thm: limit_central_density}.

\section{Background}
In this ection we review and synthesis the relevant facts and fix notations
about the geometry of manifolds that collapse with uniformly controlled
curvature and diameter, as well as the $\mathcal{F}$- and
$\mathcal{W}_+$-functionals along an immortal Ricci flow. 

\subsection{Singular fibration structure associated with the collapsing limit}
Throughout this article we consider a sequence of $m$-dimensional closed
Riemannian manifolds $\{(M_i,g_i)\}$ satisfying (\ref{eqn: regularity}) with
$C_0=1$ and $\diam (M_i, g_i)\le D$, see (\ref{eqn: CD_bound}). We say that the
sequence $\{(M_i,g_i)\}$ collapses to $(X,d)$ with uniformly controlled curvature and
diameter, when there is a metric space $(X,d)$ whose Hausdorff dimension is
$n<m$, and that
\begin{align*}
d_{GH}((M_i,g_i),(X,d))\ =:\ \delta_i\to 0\quad \text{as}\quad i\to \infty.
\end{align*} 
 Our exposition about the collapsing geometry of $(M_i,g_i)$ associated with
 $(X,d)$ will be based on the work of Cheeger, Fukaya and Gromov~\cite{CFG92}
 and the series of works by Fukaya~\cite{Fukaya87, Fukaya87ld, Fukaya88, Fukaya89}.

\subsubsection{Singularity types in the limit}
The limit metric space $(X,d)$ cannot be an aribtrary one. Roughly speaking, the
local structure around any point in $X$ is a quotient of the Euclidean space by a
finitely extended torus action. More specifically, by \cite[Theorem
0.5]{Fukaya88}, we know that for any $x\in X$ there is some open neighborhood
$U$ in $X$, and a compact Lie group $G_x$, admitting a faithful representation
in $O(m)$ and with toral identity component, acting on some open neighborhood 
$V$ of the origin $o\in  \mathbb{R}^l$ ($n\le l\le m$), such that
$(U,d,x)\equiv (V,\bar{g},o)\slash G_x$, with $\bar{g}$ being some
$G_x$-invariant metric on $V$. Especially, $x\in U$ comes from a fixed point of
the $G_x$ action on $V$.

It is therefore convenient to let $\tilde{\mathcal{S}}$ denote the collection of
points in $X$ whose associated isotropy group $G_x$ is not discrete, i.e.
$\tilde{\mathcal{S}}:=\{x\in X:\ \dim G_x>0\}.$ And it follows that
$\tilde{\mathcal{R}}:= X\backslash \tilde{\mathcal{S}}$ is a Riemannian
orbifold, which we call the \emph{orbifold regular part}, since every point in
$\tilde{\mathcal{R}}$ has a neighborhood isometric to the quotient of some open
subsets in $\mathbb{R}^n$ by a finite group action. We denote the regular part
of $\tilde{\mathcal{R}}$ as $\mathcal{R}$, i.e. $\mathcal{R}=\{x\in X:\
G_x=\{Id\}\}$, and we also denote $\mathcal{S}:=X\backslash \mathcal{R}$.
Clearly any $x\in \tilde{\mathcal{R}}\backslash \mathcal{R}$ has its isotropy
group being finite and non-trivial. 

\subsubsection{Singular fibration structure}
To understand the global structure of the collapsing limit, we would like to
relate it to $\{(M_i,g_i)\}$ for all sufficiently large $i$. By \cite[Theorem
0.12]{Fukaya88}, we know that there are continuous maps
$f_i:M_i\twoheadrightarrow X$ which furnish generalized fiber bundle structures:
\begin{enumerate}
  \item there is an infranil manifold $F_i$ such that $\forall x\in
  \mathcal{R}$, $f_i^{-1}(x)$ is diffeomorphic to $F_i$;
  \item if $x\in X\backslash \mathcal{R}$, then $G_x$ acts freely on $F_i$ and
  $f_i^{-1}(x)$ is diffeomorphic to the quotient $F_i\slash G_x$.
\end{enumerate}

In fact, when we focus our attention on $\mathcal{R}$, the restriction 
$f_i:f_i^{-1}(\mathcal{R})\to \mathcal{R}$ is indeed a fiber
bundle over the $n$-dimensional manifold $\mathcal{R}$ with infranil fibers
$F_i$. More precisely, since $(\mathcal{R},g_X)$ is a Riemannian manifold, we
can fix some small $\iota>0$, such that on $\mathcal{R}_{\iota}:=\{x\in
\mathcal{R}:\ d(x,\mathcal{S})\ge\iota\}$, the injectivity radius is bounded
below by $\iota$. Then by \cite[\S 2 and \S 3]{CFG92}, the fibration
$f_i:f_i^{-1}(\mathcal{R}_{\iota})\to \mathcal{R}_{\iota}$ can be chosen to be
sufficiently regular, and consequently, the $f_i$ fibers are not just
diffeomorphic to $F_i$ by arbitrary diffeomorphisms: by the uniform regularity
of $f_i$, each of the $f_i$ fibers is almost flat whenever $i$ is sufficiently
large, and the argument in \cite[\S 3]{CFG92} (see also \cite[\S 5]{Fukaya89}
and \cite{Ruh}) can be carried out to construct smooth connections
$\nabla_i^{\ast}$ (in the notation of \cite[\S 3]{CFG92}) on
$f_i^{-1}(\mathcal{R}_{\iota})$ such that their restrictions
$(\nabla_i^{\ast})_x$ to each $f_i^{-1}(x)$ ($x\in \mathcal{R}_{\iota}$) become
flat connections with parallel torsions. Each fiber $f_i^{-1}(x)$ is then made
in this way into an affine homogeneous space, on which the collection of
$(\nabla_i^{\ast})_x$ parallel vector fields are regarded as \emph{left}
invariant, and the fiber-wise fundamental groups act by affine transformation
on the universal covering, equipped with the naturally lifted connection.


Moreover, such fiber bundle construction can be extended over the orbifold
singularities, as carried out in \cite[\S 7]{Fukaya89}. Locally around an
orbifold singularity $x_0\in \tilde{\mathcal{R}}\backslash \mathcal{R}$, there
is an orbifold neighborhood $U_{x_0}\subset X$ such that such that for some open
neighborhood $V_{x_0}$ of the origin $o\in  \mathbb{R}^n$ and some smooth
Riemannian metric $\hat{g}_X$ on $V_{x_0}$, $G_{x_0}$ acts by discrete
isometries, and that $(U_{x_0},d,x_0)\equiv (V_{x_0},\hat{g}_X,o)\slash G_x$.
The singular fibration $f_i$ can then be chosen as the quotient of a
$G_{x_0}$ equivariant smooth fibration $\hat{f}_i:\hat{V}_{x_0,i} \to
V_{x_0}$. Notice that the finite group $G_{x_0}$ acts simultaneously on
the base $V_{x_0}$ and the $\hat{f}_i$ fibers, and since $f_i^{-1}(U_{x_0})=
\hat{V}_{x_0,i} \slash G_{x_0}$ is smooth, we could equip
$\hat{V}_{x_0,i}$ with the covering metric of $g_i|_{f_i^{-1}(U_{x_0})}$.
Shrinking $U_{x_0}$ to be sufficiently small, we still have uniform regularity
controll of $\hat{f}_i$, and each $\hat{f}_i$ fiber is then an almost flat
manifold. 

To (locally) incorporate the previously described infranil fiber bundle
structure over $U_{x_0}$, we notice that by the construction of the connection
in \cite[\S 3]{CFG92}, it is canonically determined by the underlying metric
structure. Consequently, by the $G_{x_0}$ invariance of the lifted metrics 
on $\hat{V}_{x_0,i}$, the same construction in \cite[\S 3]{CFG92} leads to a
$G_{x_0}$ equivariant connection $\hat{\nabla}_i^{\ast}$ on
$\hat{V}_{x_0,i}$, whose restriction to each $\hat{f}_i$ fiber being
flat with parallel torsion. This connection makes each $\hat{f}_i$ fiber into
an affine homogeneous space, and the group action $G_{x_0}$ is by affine
diffeomorphisms between the $\hat{f}_i$ fibers ove $\hat{V}_{x_0,i}$.

In this article, we call a surjective continuous map $f:M\to X$ an
\emph{infranil fiber bundle} over the Riemannian orbifold $X$, if $f:M\to X$
satisfies \cite[Definition 7-3]{Fukaya89}, and the fiber $F$ is an infranil
manifold equipped with a flat connection $\nabla$ with parallel torsion, with
structure group $G=Aff(F,\nabla)$.

Continuing our discussion around any orbifold point $x_0\in
\tilde{\mathcal{R}}$, with whose isotropy group $G_{x_0}$ identified, via the
connection $\hat{\nabla}_i^{\ast}$, with a finite sub-group of
$Aff(\hat{f}^{-1}(x_0),(\hat{\nabla}^{\ast}_i)_x)$ (see \cite[Proposition
3.6]{CFG92}). Notice that the group of affine diffeomorphisms is isomorphic to
$((N_i)_R\slash C(L_i))\rtimes Aut(\Gamma_i)$ --- here $N_i$ is the universal
covering of $\hat{f}_i^{-1}(x_0)$, made into a simply connected nilpotent Lie
group by equipping with $\tilde{\nabla}^{\ast}_i$, the covering connection of
$(\hat{\nabla}_i^{\ast})_{x_0}$, and fixing a base point; the fiber
fundamental group $\Gamma_i=\pi_i(\hat{f}_i^{-1}(x_0))$ and the group
$(N_i)_R$ of \emph{right} translations, act on $N_i$ by affine diffeomorphisms;
and $C(L_i):=C(N_i)\cap \Gamma_i$ is a sub-group ($\approx \mathbb{Z}^{k_0,i}$)
of $N_i$.
Denoting the quotient torus by $\mathbb{T}_i:=C(N_i)\slash C(L_i)$, we have the short
exact sequence of Lie groups $0\to \mathbb{T}_i\to N_i\slash C(L_i)\to
(N_i\slash C(L_i))\slash \mathbb{T}_i\to 0$, and the quotient group is a simply
connected nilpotent Lie group, whence being torsion free. Consequently, we see
that the action of $G_{x_0}$ on the local fiber bundle
$\hat{f}_i:\hat{V}_{x_0,i}\to V_{x_0}$ is given by a finite group
$S_{x_0,i}\rtimes \Lambda_{x_0,i}$, where $S_{x_0,i} \le \mathbb{T}_i$ acts on
the torus fibers, and $\Lambda_{x_0,i}$ is a finite sub-group of
$Aut(\Gamma_i)$.

\subsubsection{Invariant metric}
The major achievement of the work of Cheeger, Fukaya and Gromov \cite{CFG92} is
the construction of a globally defined Riemannian metric on $M_i$, which
approximates $g_i$ well and is invariant under the infinitesimal action of
$\mathfrak{N}_i$ --- a sheaf of vector fields whose action is determined as
following: integrating the $\nabla_i^{\ast}$ parallel vector fields along the
fibers to obtain germs of \emph{right} translations, and these germs of
\emph{right} translations locally define \emph{right} invariant vector fields,
which specifies the infinitesimal action of $\mathfrak{N}_i$; and the invariance
of the approximating metric amounts to say that these \emph{right} invariant
vector fields are Killing fields. While a main technical difficulty in
\cite{CFG92} involves gluing the locally constructed invariant metrics together
in a conherent and controlled way, in our case, since $f_i$ restricts to an
infranil fiber bundle over $\tilde{\mathcal{R}}_{\iota}$, the approximating
invariant metric is easily constructed by an averaging argument, as done in
\cite[\S 4]{CFG92}. We summarize the relavent results, \cite[Propositions 4.3
and 4.9]{CFG92}, in the following
\begin{prop}[Approximating invariant metric]\label{prop: invariant_metric}
For all $i$ sufficient large, there is an $(N_i)_L$ invariant metric
$g_i^1$ on $f_i^{-1}(\tilde{\mathcal{R}}_{\iota})$, such that for each $l\in
\mathbb{N}$,
\begin{align}\label{eqn: invariant_metric}
\sup_{f_i^{-1}(\tilde{\mathcal{R}}_{\iota})}|\nabla^l(g_i-g_i^1)|\ \le\
C_{2.1}(\{C_{1.3}(l)\},l)\delta_i \iota^{-1-l}.
\end{align} 
Moreover, if $g_i$ is invariant under a compact Lie group action, then so is
$g_i^1$.
\end{prop}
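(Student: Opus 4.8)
The plan is to produce $g_i^1$ by fibrewise averaging $g_i$ over the (compact) infranil fibres, which is exactly the construction of \cite[Propositions 4.3 and 4.9]{CFG92} carried out in the simpler situation where, over $\tilde{\mathcal{R}}_{\iota}$, $f_i$ already restricts to a genuine infranil fibre bundle with structure group inside $Aff(F_i,\nabla_i^{\ast})$. Concretely, I would first pass to the fibrewise universal covers (and, near an orbifold point $x_0$, work $G_{x_0}$-equivariantly on $\hat{V}_{x_0,i}$): there the connection $\nabla_i^{\ast}$ of \cite[\S 3]{CFG92} integrates to a left action of the model simply connected nilpotent Lie group $N_i$, the fibre fundamental group $\Gamma_i\le Aff(N_i)$ normalizes this action and preserves the cocompact lattice $L_i\le N_i$, and the lifted metric $\tilde{g}_i$ is $\Gamma_i$-invariant. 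I would then set
\begin{align*}
g_i^1\ :=\ \int_{L_i\backslash N_i} L_n^{\ast}\tilde{g}_i\ dn,
\end{align*}
with $dn$ the normalized Haar measure on the compact nilmanifold $L_i\backslash N_i$; the integrand depends on $n$ only through its class there, since $\tilde{g}_i$ is $L_i$-invariant, so this is well defined. Because $\Gamma_i$ normalizes the left $N_i$-action, fixes $\tilde{g}_i$, and (preserving $L_i$) preserves the Haar class, $g_i^1$ is $\Gamma_i$-invariant and hence descends to a metric on $f_i^{-1}(\tilde{\mathcal{R}}_{\iota})$; unimodularity of $N_i$ then gives $L_m^{\ast}g_i^1=g_i^1$ for all $m\in N_i$, i.e. $g_i^1$ is $(N_i)_L$-invariant (equivalently, the $\mathfrak{N}_i$-vector fields are Killing for $g_i^1$). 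Smoothness and coherence of $g_i^1$ across $f_i^{-1}(\tilde{\mathcal{R}}_{\iota})$ are inherited from those of the $\nabla_i^{\ast}$-structure, as in \cite[\S 3--4]{CFG92}.

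For the estimate (\ref{eqn: invariant_metric}) I would work on the fibre covers, where the covering projection is a local isometry so the norms $|\nabla^l(\cdot)|$ are unchanged, and write $g_i^1-\tilde{g}_i=\int_{L_i\backslash N_i}(L_n^{\ast}\tilde{g}_i-\tilde{g}_i)\,dn$. For each coset I would pick a representative $n$ and a path $\{n(s)\}_{0\le s\le 1}$ from $e$ to $n$ of $\tilde{g}_i$-length $O(\delta_i)$ (possible since the fibres collapse), so that
\begin{align*}
L_n^{\ast}\tilde{g}_i-\tilde{g}_i\ =\ \int_0^1 L_{n(s)}^{\ast}\big(\mathcal{L}_{V_s}\tilde{g}_i\big)\,ds,
\end{align*}
$V_s$ being the left-invariant field generating the path, with $|V_s|_{\tilde{g}_i}=O(\delta_i)$. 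Using $\mathcal{L}_{V}\tilde{g}_i=\tilde{g}_i(\nabla_{\bullet}V,\bullet)+\tilde{g}_i(\bullet,\nabla_{\bullet}V)$ and differentiating under the integral, the bound follows from the uniform $C^l$-estimates for the $\mathfrak{N}_i$-vector fields and $\nabla_i^{\ast}$ relative to the scale $\iota$ of the fibration over $\tilde{\mathcal{R}}_{\iota}$ established in \cite[\S 3]{CFG92} (this is where the powers of $\iota$ in (\ref{eqn: invariant_metric}) originate): one gets $|\nabla^l(L_n^{\ast}\tilde{g}_i-\tilde{g}_i)|\le C_{2.1}(\{C_{1.3}(l)\},l)\,\delta_i\,\iota^{-1-l}$ uniformly in $n$, and integrating over the probability space $L_i\backslash N_i$ yields (\ref{eqn: invariant_metric}). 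For the last assertion: if $g_i$ is invariant under a compact group $H\le \mathrm{Isom}(M_i,g_i)$ (e.g. $H=G_{x_0}$ on $\hat{V}_{x_0,i}$, or an ambient symmetry group), then $H$ preserves the metric-canonical data $\nabla_i^{\ast}$, $\mathfrak{N}_i$ and the fibration, so each $h\in H$ conjugates the left $N_i$-translations among themselves while preserving $dn$ and fixes $\tilde{g}_i$; hence $h^{\ast}g_i^1=g_i^1$ directly from the definition of $g_i^1$.

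The hard part here is not conceptual --- this is, as the authors note, essentially \cite[Propositions 4.3 and 4.9]{CFG92} --- but bookkeeping: one must check that the fibrewise averages glue to a genuinely smooth metric on $f_i^{-1}(\tilde{\mathcal{R}}_{\iota})$ and, more delicately, that the $C^l$-control of the $\mathfrak{N}_i$-structure degrades exactly like $\iota^{-1-l}$ towards $\partial\tilde{\mathcal{R}}_{\iota}$, so that the constant in (\ref{eqn: invariant_metric}) depends only on $\{C_{1.3}(l)\}$ and $l$. Both are localisations to $\tilde{\mathcal{R}}_{\iota}$ of the arguments of \cite[\S 3--4]{CFG92}, and the honest infranil bundle structure over $\tilde{\mathcal{R}}_{\iota}$ makes the gluing step immediate.
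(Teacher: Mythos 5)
The paper itself gives no proof here: Proposition~\ref{prop: invariant_metric} is stated as a ``summary'' of \cite[Propositions 4.3 and 4.9]{CFG92}, with the averaging over fibres noted in the preceding paragraph as the construction. Your proposal fills in that averaging argument, and the overall structure is the right one: lift to the fibrewise covers where the left $N_i$-action is global; set $g_i^1=\int_{L_i\backslash N_i}L_n^{\ast}\tilde g_i\,dn$; observe the integrand depends only on the coset by $L_i$-invariance of $\tilde g_i$; get $(N_i)_L$-invariance by right-invariance of Haar on $L_i\backslash N_i$ (unimodularity of $N_i$); verify $\Gamma_i$-invariance so the average descends; and estimate $g_i^1-g_i$ by integrating the Lie derivative along a short path. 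The compact-group-invariance claim is also handled correctly, since the averaging is built entirely from the metric-canonical connection $\nabla_i^{\ast}$, which any isometry preserves.

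Two points deserve care, both in the estimate. First, the generating vector field of the flow $s\mapsto L_{n(s)}$ is \emph{right}-invariant, not left-invariant as you wrote (right-invariant fields integrate to left translations); if $n(s)=\exp(s\log n)$ the two agree at $e$, so the bound $|V_s|_{\tilde g_i}=O(\delta_i)$ at the base point is fine, but the extension of $V_s$ over the fibre must be argued to stay $O(\delta_i)$, which works precisely because the fibre has $\tilde g_i$-diameter $O(\delta_i)$. Second, in the formula $L_n^{\ast}\tilde g_i-\tilde g_i=\int_0^1 L_{n(s)}^{\ast}(\mathcal L_{V_s}\tilde g_i)\,ds$ the maps $L_{n(s)}$ are \emph{not} $\tilde g_i$-isometries (only $L_i$ acts isometrically), so to push $C^l$-bounds of $\mathcal L_{V_s}\tilde g_i$ through the pullback $L_{n(s)}^{\ast}$ you also need uniform $C^l$-control on the diffeomorphisms $L_{n(s)}$ themselves. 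This is available --- $L_{n(s)}$ is a flat-connection translation over a scale $O(\delta_i)$, and $\nabla_i^{\ast}-\nabla^{LC}$ is $C^l$-controlled at scale $\iota$ as in \cite[Proposition 3.6]{CFG92} --- but it is a step rather than a formality. With those two caveats noted, the proof proposal is correct and captures the content the paper is citing.
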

This metric will prove useful in our later arguments of taking various
quotients. By this proposition and \cite[Theorem 2.6]{CFG92}, we know that each
$f_i$ fiber, measured in $g_i^1$, will have the following second fundamental
form control
\begin{align*}
\sup_{x\in \tilde{\mathcal{R}}_{\iota}}\left|II_{f_i^{-1}(x)}\right|_{g_i^1}\
\le\ C_{2.2}\iota^{-1}.
\end{align*}

\subsection{The frame bundle argument}
Associated to a collapsing sequence of manifolds $\{(M_i,g_i)\}$, in \cite[\S
1]{Fukaya88} the coresponding frame bundle manifolds $\{(FM_i, \bar{g}_i)\}$
are defined such that for any $l\in \mathbb{N}$,
\begin{align}\label{eqn: gbar_regularity}
\sup_{FM_i}|\nabla^l\Rm_{\bar{g}_i}|_{\bar{g}_i}\ \le\ \bar{C}_{1.3}(l).
\end{align}
Here the metric $\bar{g}_i$ is defined to make the $TM_i$ directions orthogonal
to the $O(m)$ directions at each point of $FM_i$, and is invariant under the
natural $O(m)$ action, making each $\pi_i:FM_i\to M_i$ a Riemannian submersion
with each $\pi_i$ fiber equipped with the standard metric on $O(m)$. Hereafter
we let $|O(m)|$ denote the correpsonding volume; then $|FM_i|_{\bar{g}_i}
=|M_i|_{g_i}|O(m)|$. It is further shown in \cite[\S 6]{Fukaya88} that this
sequence collapses to a Riemannian manifold $(Y,g_Y)$. Especially, we have the
commutative diagram that determines the singular fibration $f_i:M_i\to X$
discussed in \S 2.1.2:
\begin{align*}
\begin{CD}
FM_i @>\bar{f}_i>> Y\\
\pi_i @VV  V  @VV V \pi_Y\\
M_i @>f_i>> X
\end{CD}
\end{align*}
where $\pi_i$ and $\pi_Y$ are the Riemannian submersions given by taking the
$O(m)$ quotients, and the \emph{smooth} fibration $\bar{f}_i:FM_i\to Y$ is
$\Psi(\delta_i)$-almost $O(m)$ equivariant. The fact that $Y$ is a manifold,
rather than a singular metric space, is essentially due to the fact that local
isometries are determined, around any point, by its $1$-jet at that point. The
frame bundle argument is powerful in that the geometric structure described in
\S 2.2 over the regular part extends over the entire $Y$ as corresponding $O(m)$
equivariant structures. Important geometric applications of the frame bundle
argument, among others, include the classic \cite{CFG92} by Cheeger, Fukaya and
Gromov, where an $N$(ilpotent)-structure is constructed by gluing invariant
metrics on the locally defined frame bundles, and the construction of the
$N^{\ast}$-structure due to Naber and Tian \cite{NaberTian}, ``\emph{in some
sense dual}'' to the $N$-structure.

In our later discussions, it will be convenient to consider the invariant
metrics $\bar{g}_i^1$, naturally associated to the metrics $\bar{g}_i$ by
averaging over the $\bar{f}_i$ fibers (see \cite[(4.8)]{CFG92}), as guaranteed
by Proposition \ref{prop: invariant_metric}. Notice that associated to the
collapsing fibration $\bar{f}_i:FM_i\to Y$, the entire collapsing limit $Y$ is
regular, and thus $\bar{g}_i^1$ is defined globally on $FM_i$ and (\ref{eqn:
invariant_metric}) is valid throughout $Y$. 

Now restricting our attention to each $O(m)$ orbit in $FM_i$, by (\ref{eqn:
invariant_metric}) we could compare the volume of the $\pi_i$ fibers under the
restriction of the approximating invariant metric $\bar{g}_i^1$, with the
volume of $O(m)$ in the standard metric as following:
\begin{align}\label{eqn: O(m)_approximate_volume}
\sup_{M_i}\left|\frac{|\pi_i^{-1}(x)|_{\bar{g}_i^1}}{|O(m)|}-1\right|\ \le\
C_{2.3}\left(\frac{\delta_i}{\iota_Y}\right)^{\frac{m(m-1)}{2}},
\end{align}
where $\iota_Y$ is the injectivity radius of $g_Y$ which has a uniformly
positive lower bound by the compactness of $Y$. Consequently, for any $U\subset
\mathcal{R}_{\iota}$ with $\iota>0$ sufficiently small but fixed, the estimate
(\ref{eqn: O(m)_approximate_volume}) is valid with for any $x\in f_i^{-1}(U)$
with $\iota_Y$ replaced by $\iota$, and we have 
\begin{align} \label{eqn: limit_approximate_metric}
\lim_{i\to \infty}
\frac{\left|FM_i|_{f_i^{-1}(U)}\right|_{\bar{g}_i^1}}{|f_i^{-1}(U)|_{g_i}}\ =\
|O(m)|.
\end{align}

Moreover, in \cite[\S 3]{Fukaya87} the measure theoretic side of the frame
bundle has been explored to define the limit density function $\chi_X$ over
$X$. Since $\bar{f}_i:FM_i\to Y$ are smooth sub-mersions, the density function
\begin{align*}
\chi_Y(y)\ :=\ \lim_{i\to \infty}
\frac{|\bar{f}_i^{-1}(y)|_{\bar{g}_{i}}}{|FM_i|_{\bar{g}_i}}
\end{align*}
is well-defined for any $y\in Y$ (after possibly passing to a sub-sequence). 
Moreover, by the $O(m)$ equivariance of $\bar{f}_i$, we know that $\chi_Y$ is
constant along the $O(m)$ orbits in $Y$, and therefore as
\cite[(3.13)]{Fukaya87}, $\chi_X$ can be determined for $x\in X$ by
\begin{align}\label{eqn: chi_X_integral}
\chi_X(x)\ =\ \int_{\pi_Y^{-1}(x)}\chi_Y\ \darea_{\pi_Y^{-1}(x)},
\end{align}
where $\darea_{\pi_Y^{-1}(x)}$ is the volume form on determined by restricting
$g_Y$ to the sub-manifold $\pi_Y^{-1}(x)$.

In fact, for any $x\in \mathcal{R}$, $\bar{f}_i^{-1}(\pi_Y^{-1}(x))
=\pi_i^{-1}(f_i^{-1}(x))$ is a sub-manifold in $FM_i$, and by Fubini's theorem,
we can compute its volume as 
\begin{align}\label{eqn: FM_Fubini}
\begin{split}
\int_{\pi_Y^{-1}(x)}|\bar{f}_i^{-1}(y)|_{\bar{g}_{i}}\
\darea_{\pi_Y^{-1}(x)}(y)\ =\ &\int_{f_i^{-1}(x)}|\pi_i^{-1}(z)|_{\bar{g}_{i}}\
\darea_{f_i^{-1}(x)}(z)\\
=\ &|O(m)||f_i^{-1}(x)|_{g_{i}},
\end{split}
\end{align}
since each $\pi_i$ fiber is isometric $O(m)$ in its standard metric. 
Therefore, on the regular part of $X$, the definition (\ref{eqn:
chi_X_regular}) agrees with (\ref{eqn: chi_X_integral}): 
\begin{align}\label{eqn: Fubini_f}
\begin{split}
\forall x\in \mathcal{R},\quad 
\int_{\pi_Y^{-1}(x)}\chi_Y\ \darea_{\pi_Y^{-1}(x)}\ 
=\ \lim_{i\to \infty}\frac{|f_i^{-1}(x)|_{g_{i}}}{|M_i|_{g_i}}.
\end{split}
\end{align}

To extend (\ref{eqn: Fubini_f}) over orbifold points, let us fix $x_0\in
\tilde{\mathcal{R}}\backslash \mathcal{R}$ and pick $U_{x_0}\subset
\tilde{\mathcal{R}}$ sufficiently small so that the isotropy group
$G_{x}\le G_{x_0}$ for any $x\in U_{x_0}$, see \cite[Lemma
5.5]{Fukaya88}. Let $V_{x_0}\subset \mathbb{R}^n$ be an orbifold covering
equipped with a Riemannian metric $\hat{g}_X$ which is $G_{x_0}$
invariant and descends to $d$ on $U_{x_0}$ under the quotient map $q_{x_0}$.
Restricting the frame bundle to $f_i^{-1}(U_{x_0})$, we get the $O(m)$
equivariant fibration $\bar{f}_i:FM_i|_{f_i^{-1}(U_{x_0})}\to
\pi_Y^{-1}(U_{x_0})\subset Y$ by nil-manifolds $N\slash L$. Further shrinking
$U_{x_0}$ if necessary, we know that $G_{x_0}$ can be regarded as a
normal sub-group of $O(m)$ (by \cite[(10.2.4)]{Fukaya88} and
\cite[(6.1.10)]{CFG92}), and we have the following commutative diagram:
\begin{align}\label{eqn: diagram_f}
\begin{xy}
<0em,0em>*+{V_{x_0}}="v", <-7em,2em>*+{\hat{V}_{x_0,i}}="w",
<0em,-5em>*+{U_{x_0}}="u", <-7em,-3em>*+{f_i^{-1}(U_{x_0})}="z",
<5em,7em>*+{FM|_{f^{-1}_i(U_{x_0})}}="fm",
<12em,6em>*+{\pi_Y^{-1}(U_{x_0})}="p", 
"w";"v" **@{-} ?>*@{>}?<>(.5)*!/_0.5em/{\scriptstyle \hat{f}_i}, 
"w";"z" **@{-} ?>*@{>} ?<>(.5)*!/^0.5em/{\scriptstyle \slash G_{x_0}\ \ }, 
 "v";"u" **@{-} ?>*@{>} ?<>(.5)*!/^0.5em/{\scriptstyle q_{x_0}}, 
 "z";"u" **@{-} ?>*@{>} ?<>(.5)*!/_0.5em/{\scriptstyle f_i},
  "fm";"p" **@{-} ?>*@{>} ?<>(.5)*!/_0.5em/{\scriptstyle \bar{f}_i},
 "fm";"w" **@{-} ?>*@{>} ?<>(.5)*!/^0.5em/{\scriptstyle \hat{\pi}_i}, 
  "p";"v" **@{-} ?>*@{>} ?<>(.5)*!/^0.5em/{\scriptstyle \hat{\pi}_Y}, 
  "p";"u" **@{-} ?>*@{>} ?<>(.5)*!/_0.5em/{\scriptstyle \pi_Y}, 
  "fm";"z" **@{--} ?>*@{>} ?<>(.5)*!/_0.5em/{\scriptstyle \pi_i}
\end{xy}
\end{align}
Here $\hat{f}_i$ denotes the covering fiberation of $f_i$, which is
$G_{x_0}$ equivariant, and both $\hat{V}_{x_0,i}$ and $V_{x_0}$ are
$|G_{x_0}|$ fold coverings of $f^{-1}_i(U_{x_0})$ and $U_{x_0}$,
respectively. Also $\hat{\pi}_i$ and $\hat{\pi}_Y$ denote the quotient map by
the group action on $O(m)\slash G_{x_0}$. Notice that all the group
actions in (\ref{eqn: diagram_f}) are isometric, if we equip $\hat{V}_{x_0,i}$
with $\hat{g}_{x_0,i}$, the (finite) covering metric of
$g_i|_{f_i^{-1}(U_{x_0})}$.

Since $\hat{f}_i$ is a smooth fibration over $V_{x_0}$, $d_{GH}(\hat{V}_{x_0},
V_{x_0})\le \Psi(\delta_i)$, and $\hat{g}_{x_0,i}$ has the same uniform
sectional curvature bound as $g_i$, we could define, similar to the limit
(\ref{eqn: chi_X_regular}), a density $\hat{\chi}$ on $V_{x_0}$ as
\begin{align}\label{eqn: hat_chi}
\hat{x}\in V_{x_0},\quad 
\hat{\chi}(\hat{x})\ :=\ \lim_{i\to \infty}
\frac{|\hat{f}_i^{-1}(\hat{x})|_{\hat{g}_{x_0,i}}}{|\hat{V}_{x_0,i}|_{\hat{g}_{x_0,i}}}.
\end{align}
Moreover, since $\mathcal{R}\cap U_{x_0}$ is dense in $U_{x_0}$, $\chi_X$
is continuous on $U_{x_0}$, $|f_i^{-1}(x)|_{g_i}=
|\hat{f}_i^{-1}(\hat{x})|_{\hat{g}_{x_0,i}}$ for any $x\in \mathcal{R}\cap
U_{x_0}$ and $\hat{x}\in q_{x_0}^{-1}(x)$, and
$|\hat{V}_{x_0,i}|_{\hat{g}_{x_0,i}}
=|G_{x_0}||f_i^{-1}(U_{x_0})|_{g_i}$, we have
\begin{align}\label{eqn: chi_hat_chi}
q_{x_0}^{\ast}\ \chi_X\ =\ |G_{x_0}|\ \frac{\mu_X(U_{x_0})}{\mu_X(M)}\
\hat{\chi}\quad \text{on}\ V_{x_0}.
\end{align}
Moreover, concerning the approximating invariant metric $\bar{g}_i^1$,
(\ref{eqn: limit_approximate_metric}) is valid for the open set $U\subset
\tilde{\mathcal{R}}_{\iota}$ in this circumstance. In \S 4.2, a similar analysis
will be carried out for a locally constructed central sub-bundle around an
orbifold point.

\subsection{Functionals associated with immortal Ricci flows}
In \cite{Perelman}, Perelman introduced the following $\mathcal{F}$-functional 
\begin{align}
\mathcal{F}(g(t),u(t))\ =\ \int_M \left(|\nabla \ln u(t)|^2+\Sc_{g(t)}\right)\
u(t)\ \dvol_{g(t)},
\end{align}
along the Ricci flow on $M$, where $u(t)\in C^{\infty}(M)$ solves the conjugate
heat equation
\begin{align}\label{eqn: conjugate_heat}
\square^{\ast}u\ :=\ \left(\partial_t+\Delta_{g(t)}-\Sc_{g(t)}\right)\ u\ =\ 0,
\end{align}
which ensures that the measure $u(t)\dvol_{g(t)}$ has a fixed total mass as the
Ricci flow evolves.

Notice here we will always need to fix a finite time interval
$[0,T']\subset[0,T)$, and solve the final value problem for some (generalized)
given function $u_{T'}$ on $M$:
\begin{align}\label{eqn: final_value_problem}
\begin{cases}
\square^{\ast}u\ &=\ 0;\\
u(T')\ &=\ u_{T'}.
\end{cases}
\end{align}
Any solution $u(t)$ provides a desired function in the definition of
$\mathcal{F}(g(t),u(t))$ for $t\in [0,T')$.

The key property of the $\mathcal{F}$-entropy is its monotonicity along the
Ricci flow coupled with (\ref{eqn: conjugate_heat}), more specifically, 
\begin{align}
\mathcal{F}'(g(t),u(t))\ =\ 2\int_M \left|\Rc_{g(t)}-\nabla^2_{g(t)} \ln
u(t)\right|^2\ u(t)\ \dvol_{g(t)},
\end{align}
where we have abrieviated $\mathcal{F}'(g(t),u(t))=
\frac{\text{d}}{\text{d}t}\mathcal{F}(g(t),u(t))$, with the understanding that
$g(t)$ solves the Ricci flow equation and $u(t)$ solves equation (\ref{eqn:
conjugate_heat}).

 Notice that since $\Delta u=\left(|\nabla \ln u|^2+\Delta \ln u\right)u$,
 elementary inequalities together with integration by parts lead to
\begin{align}\label{eqn: F'_lb}
\begin{split}
\mathcal{F}'(g(t),u(t))\ \ge\
&\frac{2}{m}\int_M \left(\Sc_{g(t)}-\Delta_{g(t)}\ln u(t)\right)^2\ u(t)\
\dvol_{g(t)}\\
\ge\ &\frac{2}{m}\mathcal{F}(g(t),u(t))^2.
\end{split}
\end{align}

Now if $(M,g(t))$ is an immortal Ricci flow with uniformly bounded sectional 
curvature, then a solution $u(t)\in C^{\infty}(M\times [0,\infty))$ to
(\ref{eqn: conjugate_heat}) could be constructed as following: Pick any
sequence $t_i\to \infty$, and let $u_i(t)\in C^{\infty}(M\times [0,t_i])$ solve
the final value problems (\ref{eqn: final_value_problem}) on $[0,t_{i}+1]$ with
final value $u_i(t_i+1)=\delta_x$ for an arbitrarily fixed point $x\in M$.
Then for each $i$, we have the uniform magnitude and gradient bound of $u_i$ on
compact subsets of $M\times [0,t_i]$, by \cite[Proposition 5.1]{Foxy1809} and
\cite[Theorem 3.3]{QiZhang06}, and uniform higher regularities are guaranteed
by parabolic bootstrapping. Therefore for any $T>0$ fixed, $\{u_i\}$
sub-converges, uniformly on the compact space-time $M\times [0,T]$, to a
solution to (\ref{eqn: conjugate_heat}), and a diagonal argument gives a desired
limit solution $u(t)\in C^{\infty}(M\times [0,\infty))$ that solves (\ref{eqn:
conjugate_heat}). The uniform curvature bound implies the stochastic
completeness of the limit function, and thus $\int_Mu(t)\ \dvol_{g(t)}=1$  for
any $t\ge 0$.

 With the $u(t)$ just defined, we clearly see that the ordinary differential
 inequality (\ref{eqn: F'_lb}) holds for any $t>0$, and as observed in
 \cite{FIN05}, we must have
\begin{align}\label{eqn: F_lb}
-\frac{2}{m\ t}\ \le\ \mathcal{F}(g(t),u(t))\ \le\ 0.
\end{align}
This ensures that $\lim_{t\to \infty}\mathcal{F}(g(t),u(t))=0$. The asymptotic
vanishing of the $\mathcal{F}$-functional, together with the uniform curvature
bound, will force the asymptotic vanishing of $\mathcal{F}'(g(t),u(t))$ for
immortal Ricci flows with uniformly bounded curvature and diameter. This will
provide the desired (local) gradient steady Ricci soliton equation; see \S 6.1
for more details.


To deal with type-III Ricci flows with diameter growth controlled by
$t^{\frac{1}{2}}$, we need to rescale the metric $g(t)\mapsto t^{-1}g(t)$ to
obtain a meaningful limit space. But notice that the $\mathcal{F}$-functional
is not scaling invariant, making it inconvenient in dealing with the blowdown of
type-III Ricci flows. In \cite{FIN05}, the following $\mathcal{W}_+$-functional
is introduced to handle the rescaling of immortal Ricci flows:
\begin{align*}
\mathcal{W}_+(g(t),u(t),t)\ :=\ t\ \mathcal{F}(g(t),u(t))+\int_Mu(t)\ \ln u(t)\ 
\dvol_{g(t)}+\frac{m}{2}\ln (4\pi t)+m,
\end{align*}
where $u\in C^{\infty}(M\times [0,\infty))$ solves the equation (\ref{eqn:
conjugate_heat}). Clearly, the $\mathcal{W}_+$-functional is invariant
under the rescaling of the metric $g(t)$. Moreover, we notice that for Ricci
flows with diameter growth controlled by $t^{\frac{1}{2}}$, the lack of a
uniform positive lower bound of the global volume ratio in time forces the
$\mathcal{W}_+$-functional to explode as time elapses:
\begin{align}\label{eqn: W+_blow}
\liminf_{t\to \infty}|M|_{g(t)}\diam (M,g(t))^{-m}\ =\ 0\quad \Rightarrow\quad
\lim_{t\to\infty}\mathcal{W}_+(g(t),u(t),t)\ =\ \infty.
\end{align}
This is due to the uniform boundedness of $t\ \mathcal{F} (g(t),u(t))$ for any
$t>0$ on the one hand, and on the other hand, when the global volume
asymptotically degenerates, we see that as $t\to \infty$,
\begin{align*}
\int_Mu(t)\ \ln u(t)\ 
\dvol_{g(t)}+\frac{m}{2}\ln (4\pi)+m\ \ge\ \frac{m}{2}\ln (4\pi
e)-\ln |M|_{g(t)}\diam (M,g(t))^{-m} - \ln D\ \to \infty,
\end{align*} 
where we assume $\diam (M,g(t))\le D t^{\frac{1}{2}}$ and applied
Jensen's inequality; see \cite[Page 53]{FIN05}.

Moreover, the time derivative of the
$\mathcal{W}_+$-functional is computed as following:
\begin{align*}
\mathcal{W}_+'(g(t),u(t),t)\ =\
2\left(\mathcal{F}(g(t),u(t))+\frac{m}{2t}\right) +t\ \mathcal{F}'(g(t),u(t))
-\frac{m}{2t}.
\end{align*}
In \cite{FIN05}, a $\boldsymbol{\mu}_+$-functional is defined as the infimum of
the $\mathcal{W}_+$-functional over all smooth probability densities over
$(M,g(t))$. As shown in \cite[Theorem 1.7 (a)]{FIN05}, for each $t\ge 0$ there
is a unique minimizer $u_t\in C^{\infty}(M)$ such that $\int_Mu_t\
\dvol_{g(t)}=1$ and $\boldsymbol{\mu}_+(t)=\mathcal{W}_+(g(t),u_t,t)$. Moreover,
$u_t$ depends on $t\ge 0$ smoothly. As a consequence, $\boldsymbol{\mu}_+(t)$
varies smoothly in $t$ and we have
\begin{align}\label{eqn: mu+'}
\mu'_+(t)\ =\ 2t\int_M\left|\Rc_{g(t)}-\nabla_{g(t)}^2\ln
u_t+\frac{g(t)}{2t}\right|^2_{g(t)}\ u_t\ \dvol_{g(t)}.
\end{align}

\subsection*{Notational conventions}
Throughout this article, we employ the following notations:
\begin{enumerate}
  \item[---] $\Psi(\delta)$ denotes a positive quantity satisfying
  $\lim_{\delta\to 0}\Psi(\delta)=0$; it also depends on other parameters
  independent of $\delta>0$, and may vary from line to line.
  \item[---] $C_{a.b}(c_1,c_2,\ldots,c_l)$ denotes the constant appeared in item
  $a.b$ and it is determined by the constants $c_1,c_2,\ldots,c_l$.
  \item[---] We will frequently pass to a possible subsequence when considering
  convergence, and we will always use the original notation for the convergent
  subsequence.
  \item[---] We will let $(X_i,d_i) \xrightarrow{GH} (Y,d)$ denote the
  Gromov-Hausdorff convergence for a sequence of compact metric spaces, let
  $(X_i,d_i,x_i)\xrightarrow{pGH} (Y,d,y)$ denote the pointed Gromov-Hausdorff
  convergence, let $(X_i,d_i,\mu_i)\xrightarrow{mGH}(Y,d,\mu)$ denote Fukaya's
  measured Gromov-Hausdorff convergence, and let $(X_i,d_i)\xrightarrow{eGH}
  (Y,d)$ denote the equivariant Gromov-Hausdorff convergence, assuming isometric
  group actions on $X_i$ and $Y$ respectively. Similarly,
  $(M_i,g_i)\xrightarrow{CG}(M,g)$ denotes the Cheeger-Gromov convergence, while
  $(M_i,g_i,x_i)\xrightarrow{pCH}(M,g,x)$ denotes the pointed Cheeger-Gromov
  convergence.
\end{enumerate}

\section{Unwrapping the fibration around orbifold points and taking limits}
This section is devoted to Theorem \ref{thm: canonical_nbhd}, which describes
the local covering structure of a collapsing sequence around an orbifold point
in the collapsing limit. 
We will begin with a discussion on the local infranil fiber bundle structure
around an orbifold point, following \cite{Fukaya89, CFG92}, and unwrap the
fibers while keeping track of the regularity of the related geometric
structures. In order to study the convergence property, we then follow \cite[\S
4]{CFG92} to construct local trivializations by finding a controlled local
section. Finally, with the regularity control of the local trivializations, we
can take the pointed Cheeger-Gromov limits of the unwrapped neighborhoods. The
discussion in this section should be well-known to experts in the field and we
claim no originality here.

Recall that our setting is a sequence of closed Riemannian manifolds
$\{(M_i,g_i)\}$, satisfying (\ref{eqn: regularity}) with $C_0=1$, collapsing to
a metric space $(X,d)$ as $i\to \infty$, and we let $\delta_i:=d_{GH}(M_i,X)$.
We begin with omitting the index $i$ and focus on one of those sufficiently
collapsed manifolds in the sequence.

\subsection{Unwrapping the fibration around orbifold points}
Now we fix $x_0\in \tilde{\mathcal{R}}$ and unwrap the (singular) fibration
around $x_0$. We know that there is an open neighborhood $U\subset
\tilde{\mathcal{R}}_{\iota}$ of $x_0$ (with $\iota\in
(0,d(x_0,\tilde{\mathcal{S}}))$ sufficiently small), a finite group
$G_{x_0}$ and an invariant metric $\hat{g}_{X}$ on some open set
$V\subset \mathbb{R}^n$ such that $(U,d)\equiv (V,\hat{g}_{X})\slash
G_{x_0}$. We can reduce the size of $V$ to get some contractible and
$G_{x_0}$ invariant subset $V_{x_0}\subset V$, such that $\diam
(V_{x_0},\hat{g}_{X})$ does not excess the injectivity radius of $\hat{g}_{X}$.
By \cite[Lemma 5.5]{Fukaya88}, we may further shrink $V_{x_0}$ and ensure that
$\forall x\in U_{x_0}:=V_{x_0}\slash G_{x_0}$, the isotropy group
$G_x$ of $x$ satisfies $G_x \le G_{x_0}$. We let
$q_{x_0}:V_{x_0}\to U_{x_0}$ denote the quotient map.
Since $q_{x_0}^{-1}(x_0)\subset V_{x_0}$ consists of a single point, we still
denote that point by $x_0\in V_{x_0}$.

Moreover, by (\ref{eqn: regularity}) and \cite[\S 3, \S 5, \S 7 and
\S 10]{Fukaya88}, we know that $\hat{g}_X$ has the uniform regularity control
\begin{align}\label{eqn: hatg_X_regularity}
\forall l\in \mathbb{N},\quad
\sup_{V_{x_0}}|\nabla^l\Rm_{\hat{g}_X}|_{\hat{g}_X}\ \le\ \hat{C}_{1.3}(l,x_0).
\end{align}

 As discussed in \S 2.1.2, there is a $G_{x_0}$ equivariant fibration
 $\hat{f}:\hat{V}_{x_0}\to V_{x_0}$ that becomes a $|G_{x_0}|$ fold
 covering of the singular fibration $f:f^{-1}(U_{x_0})\to U_{x_0}$, where
 $f^{-1}(U_{x_0})\subset M$ is $\delta$ Gromov-Hausdorff close to
 $U_{x_0}\subset X$, with the subspace metrics. The action of $G_{x_0}$ is
 discrete and free on the total space $\hat{V}_{x_0}$, having $f^{-1}(U_{x_0})$
 as the smooth quotient; therefore the metric $g|_{f^{-1}(U_{x_0})}$ can be
 lifted to a covering metric $\hat{g}_{x_0}$, whose regularity is readily
 controlled in the same way as (\ref{eqn: regularity}):
 \begin{align}\label{eqn: hat_g_regularity}
 \forall l\in  \mathbb{N},\quad
 \sup_{\hat{V}_{x_0}}|\Rm_{\hat{g}_{x_0}}|_{\hat{g}_{x_0}}\ \le\ C_{1.3}(l),
 \end{align} 
 with $C_0=1$. Further shrinking $V_{x_0}$ if necessary, by \cite[Theorem
 2.6]{CFG92} we could also choose $\hat{f}$ so that its regularity is
 controlled, with respect to the metrics $\hat{g}_{x_0}$ and $\hat{g}_X$, as
 following:
 \begin{align}\label{eqn: hat_f_regularity}
\forall l\in \mathbb{N},\quad  \sup_{\hat{V}_{x_0}}|\nabla^l \hat{f}|\ \le\
C_{3.3}(l)r_{x_0}^{1-l},\quad
\text{and}\quad \sup_{\hat{x}\in
V_{x_0}}|II_{\hat{f}^{-1}(\hat{x})}|_{\hat{g}_{x_0}}\ \le\ C_{3.3} r_{x_0}^{-1}.
\end{align}
Notice that here $\nabla \hat{f}=D\hat{f}$ is a section of the vector bundle
$\hat{f}^{\ast}TV_{x_0}\otimes T^{\ast}\hat{V}_{x_0}$ over $V_{x_0}$, and its
magnitude is measured with respect to the natural bundle metric
$\hat{g}_{X}\otimes \hat{g}_{x_0}$; similarly for any $l> 1$,
$|\nabla^l\hat{f}|$ is measured as the magnitude of the tensor field
$\nabla^{l-1}(D\hat{f})$ over $V_{x_0}$.
 
 Moreover, by \cite[\S 7]{Fukaya89} via the frame bundle argument, or by
 \cite[\S 3]{CFG92} via an averaging argument, on $\hat{V}_{x_0}$ there is a
 smooth connection $\hat{\nabla}^{\ast}$ which restricts to each $\hat{f}$
 fiber to be a flat connection with parallel torsion. As explained in \cite[\S
 3]{CFG92}, this connection is canonically associated to the $G_{x_0}$
 invariant metric $\hat{g}_{x_0}$, thus being equivariant under the
 $G_{x_0}$ action, and descends to a smooth connection $\nabla^{\ast}$ on
 $U_{x_0}\cap \mathcal{R}$, being fiber-wise flat with parallel torsion. The
 regulariy of $\hat{\nabla}^{\ast}$ is readily controlled as in
 \cite[Proposition 3.6]{CFG92}, when compared against the Levi-Civita
 connection $\hat{\nabla}^{LC}$ of $\hat{g}_{x_0}$:
 \begin{align}\label{eqn: hat_nabla_regularity}
 \forall l\in \mathbb{N},\quad
\sup_{\hat{V}_{x_0}}|\nabla^l(\hat{\nabla}^{\ast}
-\hat{\nabla}^{LC})|_{\hat{g}_{x_0}}\ \le\ C_{3.4}(l)\delta r_{x_0}^{-2-l}.
\end{align}
The above mentioned $G_{x_0}$ equivariance of $\hat{\nabla}^{\ast}$ means
that the tensor field $\hat{\nabla}^{\ast}-\hat{\nabla}^{LC}$ is $G_{x_0}$
equivariant.
 
 From the discussion on \cite[Page 346]{CFG92}, we know that each fiber
 $(\hat{f}^{-1}(\hat{x}), \nabla^{\ast}_{\hat{x}})$ is affine diffeomorphic to a
 model space $(N\slash \Gamma, \nabla^{can})$. Here $N$ is an
 $(m-n)$-dimensional simply connected nilpotent Lie group, with group structure
 defined by specifying those $\nabla^{can}$-parallel vector fields as
 \emph{left} invariant, and the fundamental group of each $\hat{f}$ fiber is
 isomorphic to some $\Gamma\le Aff(N,\nabla^{can})$, as a finite extension of a
 cocompact lattice sub-group $L\le N$. Moreover, $N$ canonically defines two
 groups $N_L,N_R\le Aff(N,\nabla^{can})$ respectively, as the group of
 \emph{left} translations and \emph{right} translations by elements in $N$; see
 also \cite[Remark 3.1]{CFG92}.  The identification of each $\hat{f}$
 fiber with $(N\slash \Gamma,\nabla^{can})$ is provided by a local
 trivialization $\phi_{x_0}:V_{x_0}\times \hat{f}^{-1}(x_0)\to \hat{V}_{x_0}$ of
 the smooth fibration $\hat{f}:\hat{V}_{x_0}\to V_{x_0}$, and in the next
 sub-section we will construct such trivialization with uniformly controlled
 regularity.
 
 Let us now take a more detailed look at each $\hat{f}$ fiber. If we are at a
 regular point $x\in U_{x_0}\cap \mathcal{R}$, then $\forall \hat{x}\in
 q_{x_0}^{-1}(x) \subset V_{x_0}$, the fiber $(\hat{f}^{-1}(\hat{x}),
 \hat{\nabla}^{\ast}_{\hat{x}})$ is affine diffeomorphic to $(N\slash
 \Gamma,\nabla^{can})$. On the other hand, for any orbifold singular point
 $x\in U_{x_0}\backslash \mathcal{R}$, $q_{x_0}^{-1}(x)\subset V_{x_0}$
 consists of $|G_{x_0}\slash G_{x}|$ points, and over each $\hat{x}\in
 q_{x_0}^{-1}(x)$, the fiber $\hat{f}^{-1}(\hat{x})$ is a $|G_x|$
 fold covering of $f^{-1}(x) \approx (N\slash \Gamma)\slash G_x$ --- this
 is in accordance with the picture discribed in \cite[Theorem 0.12]{Fukaya88}. 
 Moreover, by \cite[(10.2.4)]{Fukaya88} and \cite[(6.1.10)]{CFG92}, we know that
 $G_x$, the isotropy group of $x\in U_{x_0}$, can be regarded as a
 sub-group of the holonomy action of $Aff(\hat{f}^{-1}(\hat{x}),
 \hat{\nabla}^{\ast}_{\hat{x}}) \approx (N_R\slash C(L))\rtimes Aut(\Gamma)$,
 or equivalently speaking, $G_{x}\le Aut(\Gamma)$.

Now we consider the universal covering space $\tilde{V}_{x_0}$ of
$\hat{V}_{x_0}$, which fibers over $V_{x_0}$ by the universal covering
$N_{\hat{x}}$ of $\hat{f}^{-1}(\hat{x})$, for any $\hat{x}\in V_{x_0}$. We let
$\tilde{q}:\tilde{V}_{x_0}\to \hat{V}_{x_0}$ denote the covering map, and let
$\tilde{f}:\tilde{V}_{x_0}\to V_{x_0}$ denote the covering fibration, so that
$\tilde{f}^{-1}(\hat{x})=N_{\hat{x}}$. Notice that since $\hat{V}_{x_0}$ is a
fiber bundle over the contractible base $V_{x_0}$, and every $\hat{f}$ fiber has
fundamental group isomorphic to $\Gamma$, it is also the fundamental group of
$\hat{V}_{x_0}$. Moreover, we since $G_{x_0}$ acts freely on
$\hat{V}_{x_0}$, sending $\hat{f}$ fibers to $\hat{f}$ fibers, the induced
action on $\tilde{V}_{x_0}$ sends $\tilde{f}$ fibers to $\tilde{f}$
fibers, and preserves the $\Gamma$ orbits within the correpsonding $\tilde{f}$
fibers. This makes $\tilde{f}:\tilde{V}_{x_0}\to V_{x_0}$ a $G_{x_0}$
equivariant fibration.

 We also lift the metric $\hat{g}_{x_0}$ to the covering metric 
 $\tilde{g}_{x_0} =\tilde{q}^{\ast}\hat{g}_{x_0}$, as well as the connection
 $\hat{\nabla}^{\ast}$ to the covering connection $\tilde{\nabla}^{\ast}
 =\tilde{q}^{\ast}\hat{\nabla}^{\ast}$. Since the $\Gamma$ action is discrete
 and free, the regularity control of the lifted structures, being
 infinitesimal in nature, are readily checked just as before. Especially, the
 lifted metric satisfies the same regularity control as before:
\begin{align}\label{eqn: tilde_g_regularity}
\forall l\in \mathbb{N},\quad
\sup_{\tilde{V}_{x_0}}|\nabla^l\Rm_{\tilde{g}_{x_0}}|_{\tilde{g}_{x_0}}\ \le\
C_{1.3}(l);
\end{align}
the regularity of $\tilde{f}:(\tilde{V}_{x_0},\tilde{g}_{x_0})\to
(V_{x_0},\hat{g}_X)$ is controlled as
\begin{align}\label{eqn: tilde_f_regularity}
\forall l\in \mathbb{N},\quad  \sup_{\tilde{V}_{x_0}}|\nabla^l \tilde{f}_i|\
\le\ C_{3.3}(l)r_{x_0}^{1-l},\quad
\text{and}\quad \sup_{\hat{x}\in V_{x_0}}
|II_{\tilde{f}_i^{-1}(\hat{x})}|_{\tilde{g}_{x_0}}\ \le\ C_{3.3} r_{x_0}^{-1};
\end{align}
and consequently, the lifted connection $\tilde{\nabla}^{\ast}$ satisfies the
regularity control
 \begin{align}\label{eqn: tilde_nabla_regularity}
\forall l\in \mathbb{N},\quad
\sup_{\tilde{V}_{x_0}}|\nabla^l(\tilde{\nabla}^{\ast}-\tilde{\nabla}^{LC})|\
\le\ C_{3.4}(l)\delta r_{x_0}^{-2-l},
\end{align} 
where $\tilde{\nabla}^{LC}$ is the Levi-Civita connection of $\tilde{g}_{x_0}$.
Clearly, the fundamental group $\Gamma$ acts by isometries on
$(\tilde{V}_{x_0}, \tilde{g}_{x_0})$; and under the $G_{x_0}$ action, the
metric $\tilde{g}_{x_0}$ is invariant and the tensor field
$\tilde{\nabla}^{\ast} -\tilde{\nabla}^{LC}$ is equivariant, whence the
$G_{x_0}$ equivariance of $\tilde{\nabla}^{\ast}$. Moreover, on each  
$\tilde{f}^{-1}(\hat{x})$, the restriction $\tilde{\nabla}^{\ast}_{\hat{x}}$ of
the connection $\tilde{\nabla}^{\ast}$ makes it an affine homogeneous space
$(N_{\hat{x}}, \tilde{\nabla}_{\hat{x}}^{\ast})$, and $\Gamma$ acts as affine
isometries on these fibers. Finally, by Malcev's rigidity \cite[Theorem
3.7]{CFG92}, elements of $G_{x_0}$ act as affine diffeomorphisms between
the $\tilde{f}$ fibers, since they preserve the $\Gamma$ orbits, which contain
cocompact lattices.

Summarizing the discussion in this sub-section, we have the following
\begin{prop}[Unwrapped neighborhoods around orbifold points]\label{prop:
unwrapping} 
Let $\{(M_i,g_i)\}$ be a sequence of $m$-dimensional Riemannian manifolds
collapsing to $(X,d)$ with unform regularity control (\ref{eqn: regularity})
with $C_0=1$. For any $x_0\in \tilde{\mathcal{R}}$ and any $i$ sufficiently
large, there are the following data determined by $x_0$ and $X$:
\begin{enumerate}
  \item[(1)] $r_0>0$ sufficiently small,
  \item[(2)] $U_{x_0}\subset \tilde{\mathcal{R}}$ an open neighborhood of
  $x_0$;
  \item[(3)] a contractible open set $V_{x_0}\subset \mathbb{R}^n$ equipped
  with a Riemannian metric $\hat{g}_{X}$, on which a finite group
  $G_{x_0}$ acts as isomtries,
  \item[(4)] a sequence of surjective continuous maps $f_i:f_i^{-1}(U_{x_0})
  \to U_{x_0}$, with $f_i^{-1}(U_{x_0})\subset M_i$,
  \item[(5)] a sequence of $G_{x_0}$ equivariant smooth sub-mersions
  $\hat{f}_i: \hat{V}_{x_0,i}\to V_{x_0}$, where $\hat{V}_{x_0,i}$ is a
  $|G_{x_0}|$ fold covering of $f_i^{-1}(U_{x_0})$ with covering maps
  $\hat{q}_i$, and sequences of $G_{x_0}$ invariant Riemannian metrics
  $\hat{g}_{x_0,i}$ and $G_{x_0}$ equivariant connections
  $\hat{\nabla}^{\ast}_i$ on $\hat{V}_{x_0,i}$, and
  \item[(6)] a sequence of $G_{x_0}$ equivariant smooth sub-mersions
  $\tilde{f}_i:\tilde{V}_{x_0,i}\to V_{x_0}$, where $\tilde{V}_{x_0,i}$ is the
  universal covering of $\hat{V}_{x_0,i}$ with covering maps $\tilde{q}_i$,
  together with sequences of $G_{x_0}$ invariant Riemannian metrics
  $\tilde{g}_{x_0,i}$ and $G_{x_0}$ equivariant connections
  $\tilde{\nabla}_i^{\ast}$, which are the pull-back of $\hat{g}_{x_0,i}$ and
  $\hat{\nabla}_i^{\ast}$ by $\tilde{q}_i$, respectively,
\end{enumerate}
to the following effects:
\begin{enumerate}
  \item[(7)] $(V_{x_0},o,\hat{g}_X)\slash G_{x_0}\equiv (U_{x_0},x_0,d)$ with
  quotient map $q_{x_0}$ (an orbifold covering map);
  \item[(8)] $(\hat{V}_{x_0},\hat{g}_{x_0,i})\slash G_{x_0}\equiv
  (f_i^{-1}(U_{x_0}),g_i)$ with the quotient map given by $\hat{q}_i$;
  \item[(9)] $\hat{\nabla}_i^{\ast}$ restricts to each $\hat{f}^{-1}(\hat{x})$
  to be a flat connection with parallel torsion, making it an infranil manifold, and
  each $\hat{f}^{-1}(\hat{x})$ is a finite covering of $f_i^{-1}(x)$,
  whenever $x\in U_{x_0}$ and $\hat{x}\in q_{x_0}^{-1}(x)$;
  \item[(10)] $\tilde{\nabla}_i^{\ast}$ restrict to each $\tilde{f}$ fiber to
  be a flat connection with parallel torsion, making it affine diffeomorphism to a
  $(m-n)$-dimensional simply connected nilpotent Lie group;
  \item[(11)] finally, we have the regularity estimates (\ref{eqn:
  hat_g_regularity}), (\ref{eqn: hat_f_regularity}), (\ref{eqn:
  hat_nabla_regularity}), (\ref{eqn: tilde_g_regularity}), (\ref{eqn:
  tilde_f_regularity}) and (\ref{eqn: tilde_nabla_regularity}) hold.
\end{enumerate}
\end{prop}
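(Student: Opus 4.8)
The plan is to reinstate the index $i$ throughout the preceding discussion of this subsection and to verify that each construction can be carried out $G_{x_0}$-equivariantly with all constants uniform in $i$; no ingredient beyond \cite{Fukaya88,Fukaya89,CFG92} is needed, and the content of the statement is the bookkeeping. First I would fix $x_0\in\tilde{\mathcal{R}}$ and invoke \cite[Theorem 0.5]{Fukaya88} to obtain the local uniformization $(V,\hat g_X)/G_{x_0}\equiv(U,d)$ with $G_{x_0}$ \emph{finite} (since $x_0\notin\tilde{\mathcal{S}}$), then shrink $V$ to a contractible $G_{x_0}$-invariant $V_{x_0}$ of diameter below the injectivity radius of $\hat g_X$, and shrink once more using \cite[Lemma 5.5]{Fukaya88} so that $G_x\le G_{x_0}$ for all $x\in U_{x_0}:=V_{x_0}/G_{x_0}$; this provides items (1)--(3) and (7), and the regularity bound (\ref{eqn: hatg_X_regularity}) for the limit metric $\hat g_X$ follows from (\ref{eqn: regularity}) together with the smoothing estimates of \cite[\S\S 3, 5, 7, 10]{Fukaya88}, with constants depending only on $x_0$ and $X$.

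Next, for each sufficiently large $i$, \cite[Theorem 0.12]{Fukaya88} supplies the singular fibration $f_i\colon f_i^{-1}(U_{x_0})\to U_{x_0}$, and the construction of \cite[\S 7]{Fukaya89} produces its $G_{x_0}$-equivariant $|G_{x_0}|$-fold covering $\hat f_i\colon\hat V_{x_0,i}\to V_{x_0}$ together with the lifted metric $\hat g_{x_0,i}$ of $g_i|_{f_i^{-1}(U_{x_0})}$; since passing to a covering is a local operation, (\ref{eqn: regularity}) transfers verbatim to (\ref{eqn: hat_g_regularity}). Shrinking $U_{x_0}$ if necessary, \cite[Theorem 2.6]{CFG92} lets me choose $\hat f_i$ with the fibration regularity (\ref{eqn: hat_f_regularity}) at a scale $r_{x_0}$ depending only on $x_0$ and $X$, giving (4)--(5). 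The fiberwise flat connection with parallel torsion $\hat\nabla_i^{\ast}$ is then obtained from the averaging argument of \cite[\S 3]{CFG92} (equivalently, the frame-bundle argument of \cite[\S 7]{Fukaya89}); because this connection is \emph{canonically} determined by the $G_{x_0}$-invariant metric $\hat g_{x_0,i}$, it is automatically $G_{x_0}$-equivariant, restricts flatly with parallel torsion to each $\hat f_i$-fiber, and satisfies (\ref{eqn: hat_nabla_regularity}) by \cite[Proposition 3.6]{CFG92}; this yields (9) and part of (11).

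Finally I would pass to the universal covering $\tilde q_i\colon\tilde V_{x_0,i}\to\hat V_{x_0,i}$, which, by contractibility of $V_{x_0}$, fibers over $V_{x_0}$ with fibers the universal covers of the $\hat f_i$-fibers, and lift $\hat g_{x_0,i},\hat\nabla_i^{\ast}$ to $\tilde g_{x_0,i},\tilde\nabla_i^{\ast}$; lifting being again local, the estimates (\ref{eqn: tilde_g_regularity}), (\ref{eqn: tilde_f_regularity}) and (\ref{eqn: tilde_nabla_regularity}) persist, the fiber fundamental group $\Gamma_i$ acts by affine isometries, and $G_{x_0}$ continues to act equivariantly, sending $\tilde f_i$-fibers to $\tilde f_i$-fibers and preserving $\Gamma_i$-orbits. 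Flatness plus parallel torsion of $\tilde\nabla_{i,\hat x}^{\ast}$ identifies each $\tilde f_i$-fiber with an $(m-n)$-dimensional simply connected nilpotent Lie group (via the left-invariant vector fields), giving (6), (10) and the remainder of (11); and Malcev's rigidity \cite[Theorem 3.7]{CFG92} forces every element of $G_{x_0}$ to act as an affine diffeomorphism between $\tilde f_i$-fibers, because it preserves the $\Gamma_i$-orbits, which contain cocompact lattices.

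The one place requiring genuine care — rather than a new idea — is the \emph{uniformity in $i$} of the scale $r_{x_0}$ and of the constants $C_{3.3}(l),C_{3.4}(l)$: they must depend only on $x_0$, on $X$, and on the bounds $C_{1.3}(l)$ of (\ref{eqn: regularity}). This holds because, once $U_{x_0}$ (hence $r_{x_0}$) is fixed small relative to the injectivity radius and curvature of $\hat g_X$, the Cheeger--Fukaya--Gromov fibration and smoothing theorems deliver $i$-independent bounds; the point to track is that each successive shrinking of $V_{x_0}$ can be performed independently of $i$, since the obstruction at every stage is a property of the limit $(X,d)$ rather than of the individual $(M_i,g_i)$.
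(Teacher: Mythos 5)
Your proposal is correct and follows the same route as the paper's own discussion in \S 3.1, which the proposition merely summarizes: the orbifold uniformization from \cite[Theorem 0.5]{Fukaya88} with the shrinkings via \cite[Lemma 5.5]{Fukaya88}, the lifted fibration from \cite[\S 7]{Fukaya89} with regularity via \cite[Theorem 2.6]{CFG92}, the canonical connection from the averaging of \cite[\S 3]{CFG92} with the bound in \cite[Proposition 3.6]{CFG92}, the passage to the universal covering, and Malcev rigidity \cite[Theorem 3.7]{CFG92} for the $G_{x_0}$-action. Your closing observation about the $i$-uniformity of $r_{x_0}$, $C_{3.3}(l)$, $C_{3.4}(l)$ correctly identifies the only point needing care, and your reason for it (the shrinkings are dictated by the limit $(X,d)$, not by the individual $(M_i,g_i)$) matches the paper's logic.
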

As an illustration, we have the following commutative diagram:
\begin{align}\label{eqn: diag_unwrapping}
\begin{xy}
<0em,0em>*+{\hat{V}_{x_0,i}}="x", <-7em, 0em>*+{\tilde{V}_{x_0,i}}="w",
<0em,-5em>*+{V_{x_0}}="v", <7em,0em>*+{f_i^{-1}(U_{x_0})}="y",
<7em,-5em>*+{U_{x_0}}="u", 
 "w";"x" **@{-} ?>*@{>}?<>(.5)*!/_0.5em/{\scriptstyle \tilde{q}_{i}}, 
 "w";"v" **@{-} ?>*@{>} ?<>(.5)*!/_0.5em/{\scriptstyle \tilde{f}_i}, 
 "y";"u" **@{-} ?>*@{>} ?<>(.5)*!/^0.5em/{\scriptstyle f_i}, 
 "v";"u" **@{-} ?>*@{>} ?<>(.5)*!/_0.5em/{\scriptstyle q_{x_0}},
 "x";"y" **@{-} ?>*@{>} ?<>(.5)*!/_0.5em/{\scriptstyle \hat{q}_i}, 
  "x";"v" **@{-} ?>*@{>} ?<>(.5)*!/_0.5em/{\scriptstyle \hat{f}_i}, 
\end{xy}
\end{align}
Here $\hat{q}_i$ and $\tilde{q}_i$ are covering maps, and $\hat{f}_i$ and 
$\tilde{f}_i$ are smooth $G_{x_0}$ equivariant fibrations. For all $i$
large enough, $f_i^{-1}(U_{x_0})\subset M_i$ is equipped with
$g_i|_{f_i^{-1}(U_{x_0})}$ and $f_i:f_i^{-1}(U_{x_0})\to U_{x_0}$ is a singular
fibration and a $\Psi(\delta)$ Gromov-Hausdorff approximation as well.

\begin{remark}\label{rmk: NaberTian}
The idea of unwrapping the infranil fibers around regular points is indeed very
natural, and in \cite{NaberTian4d, NaberTian}, such considerations have
already been taken; see especially \cite[Theorem 2.1]{NaberTian4d} and
\cite[Theorem 1.1]{NaberTian}. We notice that these theorems are proven for
$O(m)$ equivariant (smooth) fibrations over the regular part, and their
extensions to the general case rely on the frame bundle argument. In
comparison, Proposition \ref{prop: unwrapping} works directly on the orbifold
part; it is less general than the above mentioned results (as it does not give
the structure around the corner singularity), but works without involving any
auxiliary strucutre like the frame bundle --- a feature important in the setting
of Ricci flows; see also \cite[Per. 3 of Page 494]{Lott10}.
\end{remark}

\subsection{Controlled local trivialization around the orbifold points}
The above mentioned identification of each $\hat{f}$ fiber to a model space
$(N\slash \Gamma, \nabla^{can})$ via affine diffeomorphisms is realized by a
local trivialization of the fibration $\hat{f}$, as done in \cite[\S 4]{CFG92}.
Moreover, in order to consider the pointed Cheeger-Gromov limit of a sequence
of unwrapped neighborhoods with a uniform regularity control, we also need to
consider a fixed topological space on which the metrics and connections
converge in the $C^{\infty}$ sense, and this depends on the local
trivialization of the unwrapped fibration $\tilde{f}$. 
 Our discussion here closely follows \cite[\S 4]{CFG92}, and we include this
 part here for the convenience of readers. 
 
Fixing an orbifold point $x_0\in \tilde{\mathcal{R}}$, we will follow the
notations of Proposition~\ref{prop: unwrapping} and (\ref{eqn: diag_unwrapping})
to construct a controlled local trivialization of $\hat{f}:
\hat{V}_{x_0}\to V_{x_0}$. We pick a smooth section $s:V_{x_0}\to
\hat{V}_{x_0}$ and define a smooth trivialization of $\hat{V}_{x_0}$ by a
family of fiber-wise affine maps:
\begin{align*}
\phi_{x_0}: V_{x_0} \times \hat{f}^{-1}(o)\ &\to\ \hat{V}_{x_0}\\
(\hat{x},\ [\xi])\quad &\mapsto\ [\psi_{\hat{x}}(\xi)], 
\end{align*}
where $\forall \hat{x}\in V_{x_0}$ and $\forall \xi\in N_{\hat{x}}$,
$[\xi]=\Gamma\xi$ is the equivalence class of $\xi$ in $N_{\hat{x}}\slash
\Gamma$.
Here $\psi_{\hat{x}}: N_{o}\to N_{\hat{x}}$ is the unique affine
diffeomorphism (with respect to the affine structure defined respectively by
the fiber-wise flat connections $\tilde{\nabla}^{\ast}_{o}$ on $N_{o}$ and
$\tilde{\nabla}^{\ast}_{\hat{x}}$ on $N_{\hat{x}}$) determined by the conditions
\begin{align}\label{eqn: local_psi}
\psi_{\hat{x}}(\tilde{s}(o))\ =\ \tilde{s}(\hat{x})\quad
\text{and}\quad \forall \gamma\in L_L,\ \psi_{\hat{x}}\gamma\ =\
\gamma\psi_{\hat{x}}.
\end{align} 
Here recall that $L_L=N_L\cap \Gamma$ is a cocompact lattice in $N$ acting on
$(N_{\hat{x}},\tilde{\nabla}^{\ast}_{\hat{x}})$ by \emph{left} translations.
Also $\tilde{s}: V_{x_0}\to \tilde{V}_{x_0}$ is a lift of the local section $s$
to the universal covering $\tilde{V}_{x_0}$ of $\hat{V}_{x_0}$. The unique
existence of such affine homeomorphism is guaranteed by Malcev's rigidity
theorem, see \cite[Theorem 3.7]{CFG92}; in fact, the same rigidity also implies
that $\forall \gamma\in N_L$, $\gamma \psi_{\hat{x}}= \psi_{\hat{x}}\gamma$.
 
As a more detailed description, we notice that $\psi_{\hat{x}}$ is uniquely
determined by
\begin{align}\label{eqn: initial_psi}
\psi_{\hat{x}}(\tilde{s}(o))\ =\ \tilde{s}(\hat{x})\quad
\text{and}\quad D_{\tilde{s}(o)}\psi_{\hat{x}}:T_{\tilde{s}(o)}N_{x_0}\to
T_{\tilde{s}(\hat{x})}N_{\hat{x}}
\end{align} 
in the following manner: given $\xi\in N_{o}$, there is a unique
tangent vector vector $v\in T_{\tilde{s}(o)}N_{o}$ such that the integral
curve $c_{\xi}$ of the \emph{right} invariant vector field (determined by
$\tilde{\nabla}_{o}^{\ast}$) with initial data $(\tilde{s}(o), v)$ satisfies
$(c_{\xi}(0),\dot{c}_{\xi}(0))=(\tilde{s}(o),v)$ and $c_{\xi}(1)=\xi\in
N_{o}$; now the tangent vector $D_{\tilde{s}(o)}\psi_x\cdot v\in
T_{\tilde{s}(\hat{x})}N_{\hat{x}}$ determines a unique \emph{right} invariant
vector field on $N_{\hat{x}}$ (by the connection $\nabla_{\hat{x}}^{\ast}$ along
the fiber), and the integral curve $c^{\hat{x}}_{\xi}$ with initial data
$(\tilde{s}(\hat{x}), D_{\tilde{s}(o)} \psi_{\hat{x}}\cdot v)$ gives the desired
image $c^{\hat{x}}_{\xi}(1)=:\psi_{\hat{x}}(\xi)$.

With the local trivialization $\phi_{x_0}$, we now identify the model simply
connected nilpotent Lie group $N=(\tilde{f}^{-1}(o),
\tilde{\nabla}_{o}^{\ast}, \tilde{s}(o))$. The affine diffeomorphisms
$\psi_{\hat{x}}$ defined by (\ref{eqn: initial_psi}) provide the desired
identification between $(N,\nabla^{can})$ and $(N_{\hat{x}},
\tilde{\nabla}_{\hat{x}}^{\ast}, \tilde{s}(\hat{x}))$ for any $\hat{x}\in
V_{x_0}$. The action of $\Gamma$, $N_L$ and $N_R$ on each $N_{\hat{x}}$ are
defined accordingly for any $\hat{x}\in V_{x_0}$, and by (\ref{eqn: local_psi})
we know $\psi_{\hat{x}}$ descends to the identification of the affine
homogeneous spaces $(N\slash \Gamma,\nabla^{can})$ with
$(\hat{f}^{-1}(\hat{x}),\nabla^{\ast}_{\hat{x}})$ for each $\hat{x}\in V_{x_0}$.

With the above understanding, we know that choices of local sections canonically
determine local trivializations both for $\hat{f}$ and $\tilde{f}$. Therefore,
we do not need to stick to a specific trivialization, but rather we consider
$\hat{V}_{x_0}$, together with a smooth local section $s:V_{x_0}\to
\hat{V}_{x_0}$. We could define a local section $s$ by picking any base point
$s(o)\in \hat{f}^{-1}(o)$ and extend it in all directions via the normal
exponential map, and it can be lifted to a local section $\tilde{s}:
V_{x_0}\to \tilde{V}_{x_0}$ into the covering space. To check the regularity
of the section $s$, we notice that locally we can pick orthonormal tangent
vectors $E_1,\ldots, E_n \perp T_{s(o)}\hat{f}^{-1}(o)$ so that
$D_{s(o)}\hat{f}.E_1, \ldots, D_{s(o)}\hat{f}.E_n$ form a $\Psi(\delta)$-almost
orthonormal basis of $T_{o}V_{x_0}$. Then $s$ is essentially the composition of
the exponential maps $\exp_{x_0}^{-1}$ and $\exp_{s(o)}$:
\begin{align*}
s:\ \hat{x}=\exp_{x_0} \left(\sum_{j=1}^nt_jD_{s(o)}\hat{f}.E_j\right)\ \mapsto\
\exp_{s(o)}\left(\sum_{j=1}^nt_jE_j\right)\ =:\ s(\hat{x}).
\end{align*} 
The regularity of $s$ is then given by the regularity of $\exp_{x_0}$ and
$\exp_{s(o)}$, which are uniformly controlled by those of $\hat{g}_X$ and
$\hat{g}_{x_0}$, respectively, and thus
\begin{align}\label{eqn: s_regularity}
\forall l\in \mathbb{N},\quad |\nabla^l s|\ \le\ C_{3.11}(l)\quad
\text{and}\quad |\nabla^l \tilde{s}|\ \le\ C_{3.11}(l),
\end{align}
where $\nabla s=D s\in \Gamma(V_{x_0},s^{\ast}T\hat{V}_{x_0}\otimes
T^{\ast}V_{x_0})$ is a tensor field and $\nabla^{l+1}s=\nabla^l(Ds)$, and
$\tilde{s}$ is a lift of $s$ to the universal covering of $\hat{V}_{x_0}$. 

\subsection{Pointed Cheeger-Gromov limit of the unwrapped neighborhoods} Recall
that our purpose of unwrapping the singular fibration around orbifold points is
to locally obtain a unform lower bound of the injectivity radius, so that we
could take pointed Cheeger-Gromov limits. Based on the discussion in the last
sub-section, the pointed Cheeger-Gromov convergence of the unwrapped
neighborhoods will be realized by the controlled local trivialization.
Consequently, we will also study the limit structure and define the limit
central distribution.

 \subsubsection{The pointed Cheeger-Gromov convergence}
 Let us fix $x_0\in \tilde{\mathcal{R}}$, and let $U_{x_0}$ be the
 corresponding neighborhoods defined in the last sub-section --- notice that if
 $x_0\in \mathcal{R}$, then the construction in \S 3.2 for orbifold applies
 with $G_{x_0}= \{Id\}$, and the resulting structures and estimates
 coincide with those in \S 3.1. Now let $V_{x_0}$, $\tilde{V}_{x_0}$ and
 $\tilde{s}:V_{x_0}\to \tilde{V}_{x_0}$ denote, respectively, the orbifold
 covering, the (fiber-wise) universal covering of $\hat{V}_{x_0}$, and the lifted
 local section of the fibration  $\tilde{f}: \tilde{V}_{x_0} \to V_{x_0}$.
 Recall that such data canonically defines a local trivialization
 $\phi_{x_0}:V_{x_0}\times \hat{f}^{-1}(o)\to \hat{V}_{x_0}$; and in fact, by
 (\ref{eqn: local_psi}) and (\ref{eqn: initial_psi}), we know that the
 definition of $\phi_{x_0}$ canonically extends over the entire universal
 covering of $\hat{f}^{-1}(o)$:
 the fiber identifications $\psi_{\hat{x}}$ defined in (\ref{eqn: initial_psi})
 are indeed defined for the universal coverings of the $\hat{f}$ fibers. Let us
 denote by $\tilde{\phi}_{x_0}:V_{x_0}\times N_{x_0}\to \tilde{V}_{x_0}$ the
 corresponding covering trivialization. 
We also identify $N=(N_{o},\tilde{\nabla}_{o},\tilde{s}(o))$ as the simply
connected nilpotent Lie group acting on $\tilde{V}_{x_0}$ by \emph{left}
translations on each $\tilde{f}$ fibers, and regard the fundamental group of
$\hat{f}^{-1}(o)$ as $\Gamma \le Aff(N)$. Notice the simply connected
group $N$ is nothing but the pointed topological space
$(\mathbb{R}^{m-n},\vec{o})$ equipped with a group structure determined by
$\tilde{\nabla}^{\ast}_{o}$, and consequently we have the identification 
\begin{align*}
\left(V_{x_0}\times N_{o},(o,\tilde{s}(o))\right)\ \approx\ 
\left(V_{x_0}\times \mathbb{R}^{m-n},(o,\vec{o})\right)
\end{align*}
 as pointed topological manifolds. Denoting $W_{x_0}:=V_{x_0}\times
 \mathbb{R}^{m-n}$ and equip it with the pull-back metric  $\tilde{g}:=
 \phi_{x_0}^{\ast}\tilde{g}_{x_0}$, and the pull-back connection
 $\tilde{\nabla}:=\phi_{x_0}^{\ast}\tilde{\nabla}^{\ast}$, we recover
 $V_{x_0}\times N_{o}$ with the same pull-back metric $\tilde{g}$ and the
 product affine structure determined by extending $\tilde{\nabla}^{\ast}_{o}$
 trivially in the $V_{x_0}$ directions. Letting $p:W_{x_0}\to V_{x_0}$ denote
 the projection onto the first factor, we have  $N_{o}= \tilde{f}^{-1}(o)=
 p^{-1}(o)$ and
\begin{align}\label{eqn: restricted_metric_connection}
\tilde{g}|_{p^{-1}(o)}\ =\
 \tilde{g}_{x_0}|_{\tilde{f}^{-1}(o)}\quad \text{and}\quad
 \tilde{\nabla}|_{p^{-1}(o)}\ =\ \tilde{\nabla}^{\ast}_{o}.
\end{align}

The regularity of $\tilde{g}$ is determined not
only by the regularity (\ref{eqn: tilde_g_regularity}) of $\tilde{g}_{x_0}$,
but also by the regularity of the local section (\ref{eqn: s_regularity});
similarly, since $\tilde{\nabla}$ is nothing but $\tilde{\nabla}^{\ast}_{o}$
along each $p$ fiber, the estimate (\ref{eqn: tilde_nabla_regularity}) carries
over for $\tilde{\nabla}^{\ast}$.
Therefore, we have for any $l\in \mathbb{N}$,
\begin{align}
\sup_{W_{x_0}} |\nabla^l\Rm_{\tilde{g}}|_{\tilde{g}}\ &\le\
C_{3.13}(l),
\label{eqn: gpullback_regularity}\\
\text{and}\quad \sup_{W_{x_0}} |\nabla^l(\tilde{\nabla}
-\phi_{x_0}^{\ast}\nabla^{LC})|_{\tilde{g}}\ &\le\ C_{3.14}(l)\delta
r_{x_0}^{-2-l},
\label{eqn: nabla_pullback_regularity}
\end{align}
where $\tilde{\phi}_{x_0}^{\ast}\nabla^{LC}$ is the pull-back of the Levi-Civita
connection of $\tilde{\nabla}^{LC}$, which is the same as the Levi-Civita
connection of $\tilde{g}$. Especially, the
restricted metric $\tilde{g}|_{p^{-1}(o)}$ and connection
$\tilde{\nabla}|_{p^{-1}(o)}$ on the central $p$ fiber enjoy the same
estimates as before, given the uniform bound on
$|II_{\hat{f}^{-1}(o)}|_{\hat{g}_{x_0}}$.

With this understanding, we could now restore the index $i$ and take limits.
Recall that associated with the singular fibration
$\hat{f}_i:\hat{V}_{x_0,i})\to V_{x_0}$ there are controlled fibrations
of the unwrapped neighborhoods $\tilde{f}_i:\tilde{V}_{x_0,i}\to V_{x_0}$,
together with the metrics $\tilde{g}_{x_0,i}$ and connections
$\tilde{\nabla}^{\ast}_{i}$ on $\tilde{V}_{x_0,i}$. These structure, as
discussed above, can be tranplanted to $W_{x_0}$ for all $i$ sufficiently
large, with the help of the local trivialization $\phi_{x_0,i}$, and we get a
family of Riemannian metrics $\{\tilde{g}_i\}$ and connections
$\{\tilde{\nabla}_i\}$ on $W_{x_0}$, with the uniform (independent of $i$)
regularity control as (\ref{eqn: gpullback_regularity}) and (\ref{eqn:
nabla_pullback_regularity}). As it is easy to see that $\{\tilde{g}_i\}$ has
uniform injectivity radius lower bound (depending on $x_0$ but independent of
$i$, see also \cite[Lemma 2.5]{NaberTian4d}), the uniform regularity ensures
that we can extract sub-sequences of $\{\tilde{g}_i\}$ and
$\{\tilde{\nabla}_i\}$ that converge, in the $C^{\infty}_{loc}(W_{x_0})$
topology, to a limit Riemannian metric $\tilde{g}_{\infty}$ and a limit
connection $\tilde{\nabla}_{\infty}$.

Moreover, for the sequence of simply connected nilpotent Lie groups
$\{N_i:=(N_{o,i}, \tilde{\nabla}^{\ast}_{o,i}, \tilde{s}_i(o))\}$
equipped with the Riemannian metrics $\{\tilde{g}_{x_0,i}|_{N_{o}}\}$, by
(\ref{eqn: restricted_metric_connection}) we know that
$N_i=(\mathbb{R}^{m-n},\tilde{\nabla}_i|_{p^{-1}(o)},\vec{o})$, equipped with
the metric $\tilde{g}_i|_{p^{-1}(o)}$. Then the uniform regularity of the
metrics and connections ensures that as $i\to \infty$, possibly passing to a
sub-sequence, the pointed Cheeger-Gromov convergence 
\begin{align*}
N_i\ \xrightarrow{pCG}\
N_{\infty}:=(\mathbb{R}^{m-n},\tilde{\nabla}_{\infty}|_{p^{-1}(o)},\vec{o}),
\end{align*}
where the pointed Cheeger-Gromov map is the identity map. Moreover, on
$N_{\infty}$ the limit Riemannian metric is $\tilde{g}_{\infty}|_{p^{-1}(o)}$.
Just as before taking limit, $N_{\infty}$ acts on all $p$ fibers by \emph{left}
translations --- in fact, $\forall \hat{x}\in V_{x_0}$, since all
$\tilde{s}_i(\hat{x})$ are identified as $\vec{o}\in \mathbb{R}^{m-n}$, we have
$(p^{-1}(\hat{x}), \tilde{\nabla}_{\infty}|_{p^{-1}(o)},\vec{o})$ isomorphic to
$N_{\infty}$ as Lie groups, with isomorphisms provided by
$\psi_{\hat{x},\infty}$, the limit of $\{\psi_{\hat{x},i}\}$, see (\ref{eqn:
initial_psi}).

Since $G_{x_0}$ is finite and acts by isometries with respect to the 
pull-back metrics $\{\phi_{x_0,i}^{\ast}\tilde{g}_{x_0,i}\}$, the limit metric
$\tilde{g}_{\infty}$ remains invariant under the $G_{x_0}$ action; the
same reasoning ensures that the $G_{x_0}$ equivariance of the connections
$\{\phi_{x_0,i}^{\ast}\tilde{\nabla}^{\ast}_i\}$ passes to the equivariance of
the limit connection $\tilde{\nabla}_{\infty}$.

Recall that before taking limits, the fundamental group $\Gamma_i$ acts on each
$p$ fiber, and each $L_i=N_i\cap \Gamma_i$ is a cocompact lattice sub-group
acting on the $p$ fibers by \emph{left} translations.  Since the covering
metric $\tilde{g}_{x_0}$ is invariant under the left translation by $L_i$
(therefore $L_i$ being uniformly locally bounded and $1$-Lipschitz), by
Arzela-Ascoli's theorem we have $L_i$ converges in the $C^{\alpha}$ topology to
some group $G_{\infty}$ acting fiber-wise on $(W_{x_0},\tilde{g}_{\infty})$ by
isometries. On the one hand, by \cite[Lemma 3.1]{Fukaya88} we know that
$G_{\infty}$ is actually a Lie group. Since $L_i$ are close sub-groups of
$N_i$, the limit Lie group $G_{\infty}$ becomes a closed Lie sub-group of
$N_{\infty}$. But on the other hand, since $L_i\cap N_i$ is $\Psi(\delta_i)$
dense in $N_i$, which is equipped with the metric $\tilde{g}_i|_{p^{-1}(o)}$
(see (\ref{eqn: restricted_metric_connection})), we know that as $i\to \infty$,
\begin{align*}
\left(L_i,\tilde{g}_{i}|_{p^{-1}(o)}, \vec{o}\right)\ \xrightarrow{pGH}\ 
\left(N_{\infty}, \tilde{g}_{\infty}, \vec{o}\right).
\end{align*} 
Therefore, $G_{\infty} =N_{\infty}$, and consequently we know that
$\tilde{g}_{\infty}$ is invariant under the \emph{left} translations by
$N_{\infty}$, and consequently, the \emph{right} invariant vector fields along
the $N_{\infty}$ fibers are Killing vector fields.

\subsubsection{The limit central distribution and its density}
Let us now consider a Riemannian foliation structure of the fibration
$p:(W_{x_0},\tilde{g}_{\infty})\to (V_{x_0},\hat{g}_X)$. Among all Killing
vector fields tangent to the $p$ fibers, there is a communiting family
$\mathfrak{C}$, called the \emph{limit central distribution}, essentially
consisting of the limit of the Lie algebra of the center sub-groups.
More specifically, recall that each element in $C(N_i)\triangleleft N_i$ is
characterized by the vanishing of the commutator, therefore the smooth
convergence of the pull-back connections ensure that $C(N_i)$ accumulates to be
a closed sub-group $Z$ of $C(N_{\infty})$, whence a Lie sub-group. The limit
central distribution is then the collection $\mathfrak{C}
=\{X_1,\ldots,X_{k_0}\}$ of \emph{right} invariant vector fields along the
fibers, determined by the Lie algebra of $Z$ as following: let
$X_1(o,\vec{o}),\ldots,X_{k_0}(o,\vec{o})$ be an orthonormal basis of the Lie
algebra of $Z$, then they determine a collection of \emph{right} invariant vector
fields along the central fiber $p^{-1}(o)$; for any other $\hat{x}\in
V_{x_0}$, consider the collection of tangent vectors
$D\psi_{\hat{x},\infty}\cdot X_1(o,\vec{o}), \ldots, D\psi_{\hat{x},\infty}\cdot
X_{k_0}(o,\vec{o})$ and let them generate \emph{right} invariant vector fields
along the fiber $p^{-1}(\hat{x})$. Since $\psi_{\hat{x},\infty}$ varies
smoothly with respect to $\hat{x}\in V_{x_0}$, we obtain $X_1,\ldots,X_{k_0}$
as a family of smooth \emph{right} invariant vector fields along the $p$
fibers. Since $Z\le C(N_{\infty})$, we see that vector fields in $\mathfrak{C}$
are also \emph{left} invariant and that $\mathfrak{C}$ is a commutative,
therefore being an integrable distribution. Moreover, since each vector field
in $\mathfrak{C}$ is Killing, it actually provides a regular Riemannian
foliation of $(W_{x_0},\tilde{g}_{\infty})$. Along each leaf $L$ by integrating
$\mathfrak{C}$, we can consider the restricted metric
$H=\tilde{g}_{\infty}|_L$, and the \emph{density} of the limit central
distribution is defined to be
\begin{align*}
\det\ [H(X_a,X_b)]_{k_0\times k_0}\ :=\ \det\ 
[\tilde{g}_{\infty}(X_a,X_b)]_{k_0\times k_0}\ =\ |X_1\wedge \cdots \wedge
X_{k_0}|_{\tilde{g}_{\infty}}^2.
\end{align*}
Clearly, this density is independent of the choice of the vector fields
$X_1,\ldots,X_{k_0}$, and is constant along the leaves of $\mathfrak{C}$.
Moreover, we notice that since $(N_{\infty})_L$ acts by isometries, $|X_1\wedge
\cdots \wedge X_{k_0}|_{\tilde{g}_{\infty}}$ is not just constant on each leaf,
but also constant along the entire fiber $p^{-1}(\hat{x})$, for each $\hat{x}\in
V_{x_0}$. As a consequence, $|X_1\wedge \cdots \wedge
X_{k_0}|_{\tilde{g}_{\infty}}$ descends to a smooth function on $V_{x_0}$, which
is equal to $1$ at $x_0$.

Finally, we notice that the restricted metrics $(\tilde{\phi}_{x_0,i}^{\ast}
\tilde{g}_{x_0,i})|_{C(N_i)} =\tilde{g}_{x_0,i}|_{C(N_i)}$, smoothly converges
to the metric $\tilde{g}_{\infty}|_{Z}$, therefore $H$ can be
directly realized as the limit of the restricted metrics to the center: 
\begin{align}\label{eqn: H_convergence}
\left(C(N_i),\tilde{g}_{x_0,i}|_{C(N_i)}\right)\ \xrightarrow{pCG}\ (Z,H(x_0))
\end{align}
as $i\to \infty$, by the uniform regularity control of $\tilde{g}_{x_0,i}$ and
the uniform bound of the second fundamental form $|II_{p^{-1}(x_0)}|$ of the
central fiber with respect to the metrics $\tilde{g}_{x_0,i}$.

\section{Collapsing and convergence of integrals}
In this section we prove Theorem \ref{thm: integral}, which concerns about the
convergence of integrals over a sequence of collapsing Riemannian manifolds, and
is needed later to extract elliptic equations on the limit unwrapped
neighborhoods via the asymptotic behavior of certain functionals along the
Ricci flow. We will begin with proving Proposition \ref{prop:
measure_collapsing}, a generalization of the measured Gromov-Hausdorff
convergence theorem \cite[Theorem 0.6]{Fukaya87} to a wider family of measure
densities, then prove the convergence of integrals against suitable probability
measures in Proposition \ref{prop: collapsing_integral}, and finally we prove
Proposition\ref{prop: local_convergence}, which gives a nice representation of
the limit function on a limit unwrapped neighborhood around an orbifold point.
The proof of Theorem \ref{thm: integral} is then a direct conseqeunce of these
propositions. 

Recall that in \cite{Fukaya87}, a family of collapsing Riemannian manifolds
$\{(M_i,g_i)\}$ is naturally associated to a sequence of metric measure spaces
$\{(M_i,g_i,|M_i|_{g_i}^{-1}\dvol_{g_i})\}$, which converges to a limit metric
measure space $(X,d,\mu_X)$ in the measured Gromov-Hausdorff topology.
Especially, $\mu_X$ is absolutely continuous with respect to the limit
Riemannian metric on the regular part of $X$, with a density function
$\chi_X\ge 0$. Noticing that $|M_i|_{g_i}^{-1}\dvol_{g_i}$ is nothing but a
sufficiently regular probability measure on each $(M_i,g_i)$, we can actually
consider a more generalized setting and prove the following
\begin{proposition}\label{prop: measure_collapsing}
Assume $\{(M_i,g_i)\}$ collapses to $(X,d_X)$ with bounded curvature and
diameter, and let  $f_i:M_i\to X$ denote the singular fibration maps discussed
in \S 2.1.2.
Suppose there are $C^1$ functions $\rho_i>0$ on $M_i$ satisfying 
 \begin{align}\label{eqn: rho_condition}
\sup_{M_i} |\ln (\rho_i|M_i|_{g_i})|\ \le\ C_{4.1}\quad \text{and}\quad
 \sup_{M_i}|\nabla \rho_i||M_i|_{g_i}\ \le\ C_{4.1}',
 \end{align}
 for some uniform constants $C_{4.1}, C_{4.1}'>0$, then there is a continuous 
 density $\rho_X:X\to [e^{-C_{4.1}},e^{C_{4.1}}]$
 , such that for any open subset $U\subset X$,
\begin{align*}
\lim_{i\to \infty}\int_{f_i^{-1}(U)}\rho_i\ \dvol_{g_i}\ =\ \int_U\rho_X\ 
\dmu_{X},
\end{align*}
whre $\mu_{X}$ is the limit measure defined in (\ref{eqn: chi_X_regular}) and
(\ref{eqn: chi_X_integral}) (see also \cite[Theorem 0.6]{Fukaya87}), as a
consequence of the measured Gromov-Hausdorff convergence.
\end{proposition}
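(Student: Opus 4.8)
The plan is to reduce the statement to the known measured Gromov--Hausdorff convergence theorem \cite[Theorem 0.6]{Fukaya87} by treating $\rho_i \dvol_{g_i}$ as a perturbation of the reference probability measure $|M_i|_{g_i}^{-1}\dvol_{g_i}$. First I would normalize: set $\bar\rho_i := \rho_i|M_i|_{g_i}$, so that by (\ref{eqn: rho_condition}) one has $e^{-C_{4.1}}\le \bar\rho_i\le e^{C_{4.1}}$ on $M_i$ and $|\nabla\bar\rho_i|_{g_i}\le C_{4.1}'$ uniformly; thus $\rho_i\dvol_{g_i} = \bar\rho_i\,(|M_i|_{g_i}^{-1}\dvol_{g_i})$ is a family of measures with uniformly bounded, uniformly Lipschitz densities against the normalized volume measures. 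The task is to show that these measures converge, in the measured Gromov--Hausdorff sense, to an absolutely continuous measure $\rho_X\,\dmu_X$ on $X$ with $\rho_X$ continuous and valued in $[e^{-C_{4.1}},e^{C_{4.1}}]$.

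The key technical step is to produce the limit density $\rho_X$ by fiberwise averaging. For each $x$ in the regular part $\mathcal R$ (which is dense in $X$), I would define
\begin{align*}
\rho_X(x)\ :=\ \lim_{i\to\infty}\ \frac{1}{|f_i^{-1}(x)|_{g_i}}\int_{f_i^{-1}(x)}\bar\rho_i\ \darea_{f_i^{-1}(x)},
\end{align*}
i.e.\ the asymptotic fiber-average of $\bar\rho_i$. The existence of this limit (along a subsequence) and its continuity on $\mathcal R$ follow from the uniform Lipschitz bound on $\bar\rho_i$ together with the extrinsic diameter control on the $f_i$-fibers (bounded by $3\delta_i\to 0$, see \S 2.1.2): the oscillation of $\bar\rho_i$ on each fiber is $O(\delta_i)$, so the fiber-average is $\Psi(\delta_i)$-close to the value of $\bar\rho_i$ at any chosen point of the fiber, and the local near-product (``almost Riemannian submersion'') structure of $f_i$ over $\mathcal R_\iota$ from \cite{CFG92} gives equicontinuity of these averages in $x$; an Arzel\`a--Ascoli argument then produces a continuous limit on each $\mathcal R_\iota$, hence on $\mathcal R$, with values trapped in $[e^{-C_{4.1}},e^{C_{4.1}}]$. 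One then extends $\rho_X$ continuously to all of $X$: since $\mathcal S = X\setminus\mathcal R$ has codimension $\ge 1$ and $\rho_X$ is bounded and (locally) uniformly continuous on $\mathcal R$, it extends uniquely by continuity (the extension near corner singularities can be checked using the orbifold chart $(V,\hat g)\slash G_x$ and the equivariance of the construction).

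With $\rho_X$ in hand, the convergence of integrals over $f_i^{-1}(U)$ is obtained by a Fubini-type decomposition relative to the reference measure: write $\int_{f_i^{-1}(U)}\rho_i\,\dvol_{g_i} = \int_{f_i^{-1}(U)}\bar\rho_i\,(|M_i|_{g_i}^{-1}\dvol_{g_i})$, and compare $\bar\rho_i$ with its fiber-average $\bar\rho_i^{\mathrm{av}}\circ f_i$, where $\bar\rho_i^{\mathrm{av}}(x)$ denotes the average of $\bar\rho_i$ over $f_i^{-1}(x)$. The error $\|\bar\rho_i - \bar\rho_i^{\mathrm{av}}\circ f_i\|_{L^\infty}$ is $O(\delta_i)$ by the Lipschitz bound and fiber diameter bound, so it contributes $\Psi(\delta_i)\cdot(|M_i|_{g_i}^{-1}|f_i^{-1}(U)|_{g_i}) \to 0$ to the integral. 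For the main term, $\int_{f_i^{-1}(U)}(\bar\rho_i^{\mathrm{av}}\circ f_i)\,(|M_i|_{g_i}^{-1}\dvol_{g_i}) = \int_U \bar\rho_i^{\mathrm{av}}\,\dmu_{X,i}$ where $\dmu_{X,i} := (f_i)_*(|M_i|_{g_i}^{-1}\dvol_{g_i})$ is precisely the pushforward appearing in Fukaya's theorem, which converges weakly to $\dmu_X$; combined with the uniform convergence $\bar\rho_i^{\mathrm{av}}\to\rho_X$ on $\overline U\cap\mathcal R_\iota$ (and the fact that $\mu_X(\mathcal S)=0$, so the contribution near $\mathcal S$ is controlled by the boundedness of $\bar\rho_i^{\mathrm{av}}$ and smallness of $\mu_{X,i}$ of a neighborhood of $\mathcal S$), we get $\int_U\bar\rho_i^{\mathrm{av}}\,\dmu_{X,i}\to\int_U\rho_X\,\dmu_X$. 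This completes the argument.

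The main obstacle I anticipate is the behavior near the corner singularities $\tilde{\mathcal S}$: one must ensure that (i) the fiber-averages $\bar\rho_i^{\mathrm{av}}$ genuinely extend continuously across $\mathcal S$ despite the fibers there being lower-dimensional quotients $F_i\slash G_x$ of the model fibers, and (ii) the tail contribution $\int_{f_i^{-1}(N_\epsilon(\mathcal S))}\rho_i\,\dvol_{g_i}$ is negligible uniformly in $i$ --- this needs $\mu_{X,i}(N_\epsilon(\mathcal S))$ to be small uniformly in $i$, which follows from $\mu_X(\mathcal S)=0$ together with the weak convergence $\mu_{X,i}\to\mu_X$ applied to a slightly enlarged neighborhood, but requires care since $N_\epsilon(\mathcal S)$ depends on $\epsilon$ and one must interchange the $\epsilon\to 0$ and $i\to\infty$ limits. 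For (i), the uniform Lipschitz bound on $\bar\rho_i$ (in the ambient metric $g_i$, not just along fibers) is exactly what saves us: it gives a modulus of continuity for $\bar\rho_i$ that is independent of $i$ and transverse to the fibers, so the extension is forced. I would handle both points by working on the exhaustion $\mathcal R_\iota \nearrow \mathcal R$ and sending $\iota\to 0$ at the end.
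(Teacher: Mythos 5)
Your proposal is correct and arrives at the same conclusion, but it takes a genuinely different route around the singular set. The paper deals with the possible presence of $\tilde{\mathcal{S}}$ (and orbifold points) by lifting everything to the orthogonal frame bundles $\pi_i:FM_i\to M_i$: there the collapsing limit $Y$ is a \emph{smooth manifold} and the fibrations $\bar f_i:FM_i\to Y$ are genuine almost-Riemannian submersions, so the nonsingular case of the proposition applies cleanly after replacing $\rho_i$ by $\pi_i^*\rho_i$; the $O(m)$-invariance of the resulting limit density $\rho_Y$ then lets one descend to $\rho_X$ on $X$. You instead work directly on $X$, defining $\rho_X$ as a fiber-average of $\bar\rho_i=\rho_i|M_i|_{g_i}$ on the regular part, extending it continuously across $\mathcal{S}$ via the \emph{ambient} Lipschitz bound on $\bar\rho_i$, and controlling the contribution of $f_i^{-1}(N_\epsilon(\mathcal{S}))$ by an exhaustion $\mathcal{R}_\iota\nearrow\mathcal{R}$ together with $\mu_X(\mathcal{S})=0$ and the Portmanteau estimate $\limsup_i\mu_{X,i}(\overline{N_\epsilon(\mathcal{S})})\le\mu_X(\overline{N_\epsilon(\mathcal{S})})\to 0$. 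Both strategies work; the frame-bundle reduction in the paper is cleaner because the tail-estimate near $\mathcal{S}$ and the continuous extension across it are subsumed in the smooth fibration structure on $Y$, whereas your direct route is more elementary (no auxiliary bundle) but must carry out the $\epsilon\to 0$ / $i\to\infty$ interchange by hand. One further small difference worth noting: your equicontinuity argument compares each fiber-average to the pointwise value of $\bar\rho_i$ at a single fiber point (using the $O(\delta_i)$ oscillation bound), which is slightly simpler than the paper's estimate (\ref{eqn: rho_equicontinuity}), which also needs the fiber-volume ratio control from \cite[(3.5)]{Fukaya87}. The one place you should be careful is the phrasing ``local near-product structure over $\mathcal{R}_\iota$ gives equicontinuity'': in fact the modulus of continuity you derive from the ambient Lipschitz bound and the $3\delta_i$ extrinsic fiber-diameter bound is valid on \emph{all} of $X$, including near $\mathcal{S}$, so the continuous extension is automatic and does not actually rely on the almost-Riemannian-submersion structure — stating this explicitly would simplify and strengthen the write-up.
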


\begin{proof}
We recall the notations $\delta_i:=d_{GH}(M_i,X)$, and that $g_X$ denoting the
Riemannian metric on the regular part $\mathcal{R}\subset X$. If $X$ has no
singularity and (therefore) $f_i$ are genuine almost Riemannian sub-mersions, 
the proof follows from the co-area formula. Since $f_i$ is an almost Riemannian
submersion, \cite[(0-1-3)]{Fukaya87ld} implies that
$\sup_{M_i} ||J_n(f_i)|_{g_i}- 1|\le \Psi(\delta_i)$, where $|J_n(f_i)|$ is
the Jacobian of $f_i$. For any open $U\subset X$, we have, with $\chi_i(x)
:=\frac{|f_i^{-1}(x)|_{g_i}}{|M_i|_{g_i}}$, the following estimate:
\begin{align*}
\int_{f_i^{-1}(U)}\rho_i\ \dvol_{g_i}\ =\ 
&\int_{f_i^{-1}(U)}\rho_i|J_n(f_i)|_{g_i}\
\dvol_{g_i}\pm \Psi(\delta_i)\\
=\ &\int_U\left(\fint_{f_i^{-1}(x)}\rho_i|M_i|_{g_i}\
\darea_i(x)\right)\ \chi_i(x)\
\dvol_{g_X}(x)\pm \Psi(\delta_i).
\end{align*}

Denoting $\rho_{X,i}(x):=\fint_{f_i^{-1}(x)}\rho_i|M_i|_{g_i}\
\darea_i(x)$, we notice that the family $\{\rho_{X,i}\}$ are equicontinuous on
$X$: by \cite[\S 3 and \S 4]{Fukaya87ld} and \cite[(3.5)]{Fukaya87}, we know
that there is a constant $\rho>0$ which only depends on $X$ (and espeically, is
independent of $i$), and a positive function $\Psi_X$ determined only by $X$,
such that whenever $d_X(x,y)<\rho$, we have 
\begin{align*}
|\rho_{X,i}(x)-\rho_{X,i}(y)|\ \le\
&\frac{1}{|f_i^{-1}(x)|}\left|\int_{f_i^{-1}(x)} \rho_i|M_i|_{g_i}\ \darea_i(x)
-\int_{f_i^{-1}(y)} \rho_i|M_i|_{g_i}\ \darea_i(y) \right|\\
&+\left|1-\frac{|f_i^{-1}(y)|}{|f_i^{-1}(x)|}\right|
\fint_{f_i^{-1}(y)}\rho_i|M_i|_{g_i}\ \darea_i(y);
\end{align*}
now picking any $x_i\in f^{-1}_i(x)$ and $y_i\in f_i^{-1}(y)$, we know that
$d(x_i,y_i)\le \rho+3\delta_i$, and can proceed as 
\begin{align}\label{eqn: rho_equicontinuity}
\begin{split}
 |\rho_{X,i}(x)-\rho_{X,i}(y)|\ \le\ &\left|\rho_i(x_0)
-\rho_i(y_0)\frac{|f_i^{-1}(y)|}{|f_i^{-1}(x)|}\right||M_i|_{g_i}
+(6C_{4.1}'\delta_i+e^{C_{4.1}})\left|1-\frac{|f_i^{-1}(y)|}{|f_i^{-1}(x)|}\right|\\
\le\ &|\rho_i(x_0)-\rho_i(y_0)||M_i|_{g_i}
+(6C_{4.1}'\delta_i+2e^{C_{4.1}})\left|1-\frac{|f_i^{-1}(y)|}{|f_i^{-1}(x)|}\right|\\
\le\ &6C_{4.1}'(d_X(x,y)+\delta_i)+(6C_{4.1}'\delta_i+2e^{C_{4.1}})
O\left(\Psi_X(d_X(x,y))+\delta_i\right),
\end{split}
\end{align}
where $O\left(\Psi_X(d_X(x,y))+\delta_i\right)=\sum_{n=1}^{\infty}
\frac{1}{n!}\left(\Psi_X(d_X(x,y))+\delta_i\right)^n$, whence the desired
equicontinuity. By the uniform boundedness assumption (\ref{eqn: rho_condition})
and the equicontinuity, Arzela-Ascoli's Theorem guarantees the existence of some
$\rho_X$ such that (after possibly passing to a sub-sequence) $\rho_{X,i}\to
\rho_X$ uniformly on $X$, as $i\to \infty$. Noice also that throughout $X$,
$\rho_X$ only takes values in $[e^{-C_{4.1}},e^{C_{4.1}}]$. Now we could
estimate for all $i$ sufficiently large, that 
\begin{align*}
\left|\int_{f_i^{-1}(U)}\rho_i\ \dvol_{g_i}-\int_U\rho_X\ \dmu_X\right|\
 \le\ &\left|\int_U\rho_X\ \chi_i\ \dvol_{g_X} -\int_U\rho_X\
 \dmu_X\right|+2\mu_X(U)\sup_X |\rho_{X,i}-\rho_X| 
 +\Psi(\delta_i),
\end{align*}
and by \cite[Lemma 3.8]{Fukaya87} we get the desired convergence.

In the general case, the usual frame bundle argument would finish the proof.
More specifically, we first notice that for the canonical Riemannian submersions
$\pi_i:(FM_i,\bar{g}_i)\to (M_i,g_i)$, we have $\pi_i^{\ast}\rho_i$
being constant along all $O(m)$ fibers. Here recall that each $O(m)$ fiber is
equipped with its canonical metric. Therefore, by condition (\ref{eqn:
rho_condition}), we have the following estimates on $FM_i$:
\begin{align*}
e^{-C_{4.1}}|O(m)|\ \le\ (\pi_i^{\ast}\rho_i)|FM_i|_{\bar{g}_i}\ \le\
e^{C_{4.1}}|O(m)|\quad\text{and}\quad \sup_{FM_i}|\nabla
\pi_i^{\ast}\rho_i||FM_i|_{\bar{g}_i}\ \le\ C_{4.1}'|O(m)|.
 \end{align*} 
  There is also an $(n+\frac{m(m-1)}{2})$-dimensional Riemannian manifold
  $(Y,g_Y)$, on which $O(m)$ acts by isometries, with quotient isometric to
  $(X,d_X)$.
  Moreover, there are almost Riemannian submersions $\bar{f}_i:FM_i\to Y$,
  such that $\pi_Y\circ \bar{f}_i=F_i\circ \pi_i$, with $\pi_Y: Y\to X$ denoting
  the quotient map of the $O(m)$ actions; recall the discussions in \S 2.2.

The key point here is that for any open $U\subset X$,
$f_i^{-1}(U)=\pi_i(\bar{f}_i^{-1}(\pi_Y^{-1}(U)))$. Therefore, applying the
previous case to $\pi_Y^{-1}(U)\subset Y$, we see
\begin{align*}
\lim_{i\to \infty}\int_{\bar{f}_i^{-1}(\pi_Y^{-1}(U))}\pi_i^{\ast}\rho_i\
\dvol_{\bar{g}_i}\ =\ \int_{\pi_Y^{-1}(U)} \rho_Y\ \dmu_Y.
\end{align*}
Notice that $\rho_Y$ is constant on each $O(m)$ orbit in $Y$, and
therefore taking $O(m)$ quotients on both sides of the equation gives the
desired equality for $\rho_X$ on $U$. In fact, we define $\rho_X$ such that 
$\rho_Y|O(m)|^{-1}=\pi_Y^{\ast}\rho_X$ --- in this way, we have on the one
hand for all $i$ sufficiently large,
\begin{align*}
\int_{\bar{f}_i^{-1}(\pi_Y^{-1}(U))}\pi_i^{\ast}\rho_i\ \dvol_{\bar{g}_i}\ &=\
|O(m)|\int_{f_i^{-1}(U)}\rho_i\ \dvol_{g_i},
\end{align*}
 and on the other hand, by (\ref{eqn: chi_X_integral}) we have 
\begin{align*}
\int_{\pi_Y^{-1}(U)}\rho_Y\ \dmu_Y\ =\ &\int_U\left(\int_{\pi_Y^{-1}(x)}\rho_Y\
\darea_{\pi_Y^{-1}(x)}\right)\ \dvol_{g_X}\\
=\ &\int_U\rho_X|O(m)|\ \chi_X\ \dvol_{g_X}\\
=\ &|O(m)|\int_U\rho_X\ \dmu_X.
\end{align*} 
This implies the desired convergence, as well as the desired value bounds for
$\rho_X$.
\end{proof}

In our later applications to Ricci flows as discussed in the last sub-section,
we would let $\rho_i=u(t_i)$, the conjugate heat density solving
$\square^{\ast}u=0$ at various time instances $t_i\to\infty$. Notice that the
total heat is always a constant, i.e. $\int_Mu(t_i)\ \dvol_{g(t_i)}=1$ for any
$i$.

It is therefore natural to think of $\rho_i\dvol_{g_i}$ in
Proposition~\ref{prop: measure_collapsing} as a sequence of measure densities
with uniformly bounded and positive total mass and certain regularity assumptions,
and consider the collapsing and convergence about integrating a family of
functions against such measures:
\begin{prop}\label{prop: collapsing_integral}
Let $(M_i,g_i,\rho_i)\xrightarrow{mGH} (X,d_X,\rho_X)$ be the data described
in Proposition~\ref{prop: measure_collapsing}, with the singular fibration 
structures $f_i:M_i\to X$ as described in \S 2.1.2. We further assume that each
$g_i$ satisfies (\ref{eqn: regularity}). Suppose there are $w_i\in
C^{\infty}(M_i)$ satisfying the uniform $C^1$ control
\begin{align}\label{eqn: w_i_C2}
\sup_{M_i}\left(|w_i|+|\nabla_{g_i}w_i|\right)\ \le\ C_{4.2},
\end{align}
for some constant $C_{4.2}>0$. Then there is a subsequence, still denoted by
$\{w_i\}$, and a continuous function $w_X$ on $X$ such that for any open
set $U\subset X$,
 \begin{align}\label{eqn: integral_limit}
  \lim_{i\to \infty}\int_{f_i^{-1}(U)} w_i\ \rho_i\ \dvol_{g_i}\ =\ \int_Uw_X\
  \rho_X\ \dmu_X.
  \end{align}
\end{prop}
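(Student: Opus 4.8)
The plan is to follow the proof of Proposition~\ref{prop: measure_collapsing} almost verbatim, the only new point being a fiber-averaging of the $w_i$ together with the observation that, since each $w_i$ is \emph{genuinely} bounded (rather than bounded only after multiplication by $|M_i|_{g_i}$), it oscillates by at most $3C_{4.2}\delta_i$ along each $f_i$-fiber, whose extrinsic diameter is at most $3\delta_i$ by \cite[Theorem 0.12]{Fukaya88}. As in Proposition~\ref{prop: measure_collapsing} I would first treat the case in which $X$ has no singularity, so that the $f_i$ are honest almost Riemannian submersions; the general case then follows by the frame bundle argument, pulling $w_i$ back to $\pi_i^{\ast}w_i$ on $(FM_i,\bar{g}_i)$ --- which still obeys (\ref{eqn: w_i_C2}) and is constant along the $O(m)$ fibers --- applying the non-singular case to $\bar{f}_i:FM_i\to Y$, noting that the resulting limit is $O(m)$ invariant because $\bar{f}_i$ is $\Psi(\delta_i)$-almost equivariant, and descending it through $\pi_Y:Y\to X$ exactly as $\rho_Y$ was descended to $\rho_X$.

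For the non-singular case, set $w_{X,i}(x):=\fint_{f_i^{-1}(x)}w_i\ \darea_i$. Repeating the estimate (\ref{eqn: rho_equicontinuity}), now with $\sup_{M_i}(|w_i|+|\nabla_{g_i}w_i|)\le C_{4.2}$ in place of the bounds on $\rho_i|M_i|_{g_i}$ and using the fibration-regularity comparison $\left|1-|f_i^{-1}(y)|/|f_i^{-1}(x)|\right|=O(\Psi_X(d_X(x,y))+\delta_i)$ from \cite[\S 3 and \S 4]{Fukaya87ld} and \cite[(3.5)]{Fukaya87}, shows that $\{w_{X,i}\}$ is uniformly bounded and equicontinuous on $X$. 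By Arzela--Ascoli I may pass to a further subsequence along which $w_{X,i}\to w_X$ uniformly on $X$, with $w_X$ continuous; along it $\rho_{X,i}\to\rho_X$ uniformly as well, as already established in the proof of Proposition~\ref{prop: measure_collapsing}.

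Then, by the co-area formula together with $\sup_{M_i}||J_n(f_i)|_{g_i}-1|\le\Psi(\delta_i)$ (\cite[(0-1-3)]{Fukaya87ld}), for any open $U\subset X$,
\begin{align*}
\int_{f_i^{-1}(U)}w_i\,\rho_i\,\dvol_{g_i}\ =\ \int_U\left(\fint_{f_i^{-1}(x)}w_i\,\rho_i\,|M_i|_{g_i}\ \darea_i\right)\chi_i(x)\ \dvol_{g_X}(x)\ \pm\ \Psi(\delta_i),
\end{align*}
where $\chi_i(x)=|f_i^{-1}(x)|_{g_i}/|M_i|_{g_i}$. On $f_i^{-1}(x)$ the function $w_i$ differs from $w_{X,i}(x)$ by at most $3C_{4.2}\delta_i$ while $\rho_i|M_i|_{g_i}\le e^{C_{4.1}}$, so the inner average factors as $w_{X,i}(x)\,\rho_{X,i}(x)+O(\delta_i)$ uniformly in $x$, with $\rho_{X,i}(x)=\fint_{f_i^{-1}(x)}\rho_i|M_i|_{g_i}\,\darea_i$. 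Since $w_{X,i}\rho_{X,i}\to w_X\rho_X$ uniformly and $\int_X\chi_i\,\dvol_{g_X}$ stays bounded, $\int_U w_{X,i}\rho_{X,i}\chi_i\,\dvol_{g_X}$ and $\int_U w_X\rho_X\chi_i\,\dvol_{g_X}$ have the same limit, and the latter converges to $\int_U w_X\rho_X\,\dmu_X$ by the weak convergence $\chi_i\,\dvol_{g_X}\to\dmu_X$ of \cite[Lemma 3.8]{Fukaya87} applied to the continuous function $w_X\rho_X$. This gives (\ref{eqn: integral_limit}).

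The steps above are all routine once Proposition~\ref{prop: measure_collapsing} is in hand; the only points I expect to require genuine care are, as there, verifying the equicontinuity of the fiber averages $\{w_{X,i}\}$ and checking that the various error terms --- in particular the factorization of the fiber average of the product $w_i\rho_i$ --- are $\Psi(\delta_i)$ \emph{uniformly} in $x$, which is exactly the type of estimate already carried out for $\{\rho_{X,i}\}$.
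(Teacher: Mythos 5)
Your proposal follows the paper's argument in all essentials: pass to the frame bundle, take fiber averages $\bar w_i(y)=\fint_{\bar f_i^{-1}(y)}\pi_i^\ast w_i\,\darea$, establish equicontinuity exactly as for $\rho_{X,i}$ and invoke Arzel\`a--Ascoli, use the estimate that $w_i$ oscillates by at most $3C_{4.2}\delta_i$ on each fiber to factor the fiber average of $w_i\rho_i$ as $w_{X,i}\rho_{X,i}+O(\delta_i)$ (the paper's analogue is (\ref{eqn: bar_w_i_fiber_value})), and then descend through the $O(m)$ quotient. The only difference is organizational --- you spell out the nonsingular case before invoking the frame bundle, whereas the paper goes directly to $FM_i$ --- but this has no mathematical content.
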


\begin{proof}
We appeal to the frame bundle argument again. Let $\pi_i:FM_i\to M_i$ be the
frame bundle over $M_i$, as discussed in the proof of Proposition~\ref{prop:
measure_collapsing}. Notice that $\pi_i^{\ast}w_i$ are constant along the
$O(m)$ fibers, and therefore we have the estimates
\begin{align}\label{eqn: pi_hi_C2}
\sup_{FM_i}\left(|\pi_i^{\ast}w_i|
+|\nabla_{\bar{g}_i}(\pi_i^{\ast}w_i)|\right)\ \le\ \bar{C}_{4.2}(C_{4.2},m).
\end{align}
Also recall that $(FM_i,\bar{g}_i)\xrightarrow{eGH}(Y,g_Y)$ with $O(m)$
equivariant fibraions $\bar{f}_i:FM_i\to Y$, which are also
$\Psi(\delta_i)$ almost Riemannian submersions --- here remember that 
$\delta_i=d_{GH}(M_i,X)$.


Now by (\ref{eqn: pi_hi_C2}) and the same argument leading to (\ref{eqn:
rho_equicontinuity}), we know that the functions $\bar{w}_i$ defined on $Y$ as
\begin{align*}
\bar{w}_i(y)\ :=\ \fint_{\bar{f}_i^{-1}(y)}\pi_i^{\ast}w_i\
\darea_{\bar{f}_i^{-1}(y)}
\end{align*}
are uniformly bounded and equicontinuous. Therefore there is a limit continuous
function $w_Y$ to which a sub-sequence, still denoted by $\{\bar{w}_i\}$,
converges uniformly. 

Notice that $w_Y$ is constant along the $O(m)$ orbits in $Y$, since so are all
$\pi_i^{\ast}w_i$ and consequently all $\bar{w}_i$. Therefore, $w_Y$ naturally
descends to a continuous function $w_X$ defined on $X$.

Moreover, it is easy to see, by the co-area formula, that for any $U\subset X$
open, with $\pi_Y^{-1}(U)\subset  Y$, we have
\begin{align*} 
\lim_{i\to \infty}\int_{\bar{f}_i^{-1}(\pi_Y^{-1}(U))}
(\pi_i^{\ast}w_i)(\pi_i^{\ast}\rho_i)\ \dvol_{\bar{g}_i}\ &=\
\lim_{i\to\infty}\int_{\pi_Y^{-1}(U)} \left(\int_{\bar{f}_i^{-1}(y)}
(\pi_i^{\ast}w_i)(\pi_i^{\ast}\rho_i)\ \darea_{\bar{f}_i^{-1}(y)}\right)\
\dvol_{g_Y}(y)\\
&=\ \lim_{i\to \infty}\int_{\pi_Y^{-1}(U)} \bar{w}_i(y)\ \rho_{Y,i}(y)\
\dmu_Y(y)\\
&=\ \int_{\pi_Y^{-1}(U)}w_Y\ \rho_Y\ \dmu_Y,
\end{align*}
where for the second equality we need the estimate
\begin{align}\label{eqn: bar_w_i_fiber_value}
\sup_{z\in FM_i}\left|(\pi_i^{\ast}w_i)(z)-\bar{w}_i(\bar{f}_i(z))\right|\ \le\
3\bar{C}_{4.2}\Psi(\delta_i),
\end{align}
which easily follows from (\ref{eqn: pi_hi_C2}) and the definition of
$\bar{w}_i$, as
\begin{align*}
|\pi_i^{\ast}w_i(z)-\pi_i^{\ast}w_i(z')|\ \le\ \bar{C}_{4.2}d_{\bar{g}_i}(z,z')
\end{align*}
for any $z,z' \in \bar{f}_i^{-1}(y)$ and the \emph{extrinsic}
diameter of $\bar{f}_i^{-1}(y)$ is bounded above by $\Psi(\delta_i)$ for any
$y\in Y$. Now by the $O(m)$ invariance in taking the previous limit, we see that
\begin{align*}
\lim_{i\to \infty}\int_{f_i^{-1}(U)}w_i\ \rho_i\ \dvol_{g_i}\ =\
\int_Uw_X\ \rho_X\ \dmu_X,
\end{align*}
which is the desired inetegral equality. Especially, $\pi_Y^{\ast}w_X=w_Y$.
\end{proof}


We now finish the proof of Theorem \ref{thm: integral} by establishing the
following representation of the limit function around an orbifold point:
\begin{prop}\label{prop: local_convergence}
Besides the assumptions in Proposition \ref{prop: collapsing_integral}, assume
that $\sup_{M_i}\|w_i\|_{C^k}\le C_{4.3}$ for some $k\ge 2$, and fix any $x_0\in
\tilde{\mathcal{R}}$. Let $U_{x_0}\subset \tilde{\mathcal{R}}$, $V_{x_0}$, and 
$\{(\tilde{V}_{x_0,i}, \tilde{g}_{x_0,i})\}$ denote the data obtained
from Proposition \ref{prop: unwrapping}, and let $(W_{x_0},\tilde{g}_{\infty})$
denote the limit unwrapped neighborhood, with the limit Riemannian submersion 
$p:W_{x_0}\to V_{x_0}$, as obtained in Theorem \ref{thm:
canonical_nbhd}.  Denoting $\tilde{w}_i:= \tilde{q}_i^{\ast} \hat{q}_i^{\ast}
w_i|_{f_i^{-1}(U_{x_0})}$ as the pull-back of $w_i$ to $\tilde{V}_{x_0, i}$ by
the covering maps $\hat{q}_i$ and $\tilde{q}_i$, see diagram (\ref{eqn:
diag_unwrapping}), then there is a function $\tilde{w}_{\infty}\in
C^{k-1,\alpha}(W_{x_0})$, to which $\tilde{w}_i$ converges in the
$C^{k-1,\alpha}$ topology for any $\alpha\in (0,\alpha')$ and any $\alpha'<1$.
Morevoer, $\tilde{w}_{\infty}$ is constant along the $p$ fibers, so
that $p^{\ast}q_{x_0}^{\ast}\left(w_X|_{U_{x_0}}\right)=\tilde{w}_{\infty}$.
\end{prop}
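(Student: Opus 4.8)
The plan is to pull everything back to the fixed model space $W_{x_0} = V_{x_0}\times \mathbb{R}^{m-n}$ and run a compactness-plus-elliptic-regularity argument, leveraging the fact that the covering maps $\hat q_i, \tilde q_i$ are local isometries so that $\tilde w_i = \tilde q_i^\ast\hat q_i^\ast(w_i|_{f_i^{-1}(U_{x_0})})$ inherits the $C^k$ bound $\sup_{W_{x_0}}\|\tilde w_i\|_{C^k(\tilde g_i)}\le C_{4.3}$ from the hypothesis $\sup_{M_i}\|w_i\|_{C^k}\le C_{4.3}$. By Proposition~\ref{prop: unwrapping} (and the limit discussion in \S 3.3), the metrics $\tilde g_i$ converge in $C^\infty_{loc}(W_{x_0})$ to $\tilde g_\infty$, with a uniform-in-$i$ injectivity radius lower bound; hence the uniform $C^k$ bound on $\tilde w_i$, together with Arzel\`a--Ascoli on every compact exhaustion of $W_{x_0}$ and a diagonal argument, extracts a subsequential limit $\tilde w_\infty\in C^{k-1,\alpha}_{loc}(W_{x_0})$ with $\tilde w_i\to\tilde w_\infty$ in $C^{k-1,\alpha}_{loc}$ for every $\alpha<1$. (Here I record that $C^{k-1,\alpha}$ closure of a $C^k$ bounded family is automatic; no equation is needed yet. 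If one wants the $C^{k-1,1}$ statement of Theorem~\ref{thm: integral}, it follows by weak-$\ast$ compactness of $W^{k,\infty}$ on each relatively compact piece.)

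The second and more substantive point is that $\tilde w_\infty$ is constant along the $p$ fibers. The mechanism is that the $p$ fibers collapse: each $\tilde f_i$ fiber $N_i = p^{-1}(\hat x)$, when given $\tilde g_i$, carries the increasingly dense action of the cocompact lattice $L_i\le N_i$ by left translations, and $\tilde w_i$ descends from $M_i$, so $\tilde w_i$ is $\Gamma_i$-invariant, in particular $L_i$-invariant. By the $\Psi(\delta_i)$-density of $L_i$ in $N_i$ with respect to $\tilde g_i|_{p^{-1}(\hat x)}$ established in \S 3.3 (the same convergence $(L_i,\tilde g_i|_{p^{-1}(o)},\vec o)\xrightarrow{pGH}(N_\infty,\tilde g_\infty,\vec o)$ used there), and the uniform $C^1$ bound on $\tilde w_i$, the oscillation of $\tilde w_i$ along any $p$ fiber is $O(\Psi(\delta_i))$; passing to the limit, $\tilde w_\infty$ is constant on each $p^{-1}(\hat x)$. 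Concretely, for $\xi,\xi'\in p^{-1}(\hat x)$ one picks $\gamma\in L_i$ with $d_{\tilde g_i}(\gamma\xi,\xi')\le \Psi(\delta_i)$ and uses $L_i$-invariance plus the Lipschitz bound to get $|\tilde w_i(\xi)-\tilde w_i(\xi')|\le C_{4.3}\,\Psi(\delta_i)$; letting $i\to\infty$ finishes it. Hence $\tilde w_\infty$ factors through $p$, i.e.\ $\tilde w_\infty = p^\ast h$ for some $h$ on $V_{x_0}$, and $h\in C^{k-1,\alpha}(V_{x_0})$ since $p$ is a Riemannian submersion.

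It remains to identify $h$ with $q_{x_0}^\ast(w_X|_{U_{x_0}})$. This is a consistency check with Proposition~\ref{prop: collapsing_integral}: $w_X$ there is characterized (via the frame-bundle construction, or equivalently via fiber-averaging) by $w_X(x) = \lim_{i\to\infty}\fint_{f_i^{-1}(x)}w_i$ for $x$ in the dense regular set $\mathcal R\cap U_{x_0}$. Lifting through $\hat q_i$ and the trivialization $\phi_{x_0,i}$, the value $h(\hat x)$ for $\hat x\in q_{x_0}^{-1}(x)$ equals $\lim_i \tilde w_i(\tilde s_i(\hat x))$, which by the fiber-constancy just proven equals $\lim_i \fint_{\tilde f_i^{-1}(\hat x)}\tilde w_i = \lim_i\fint_{f_i^{-1}(x)}w_i = w_X(x)$ (using that $\hat f_i^{-1}(\hat x)$ is a finite, hence volume-normalized identical, cover of $f_i^{-1}(x)$). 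So $h = q_{x_0}^\ast w_X$ on the dense set $\mathcal R\cap U_{x_0}$, hence everywhere by continuity, and $p^\ast q_{x_0}^\ast(w_X|_{U_{x_0}}) = \tilde w_\infty$.

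The main obstacle I expect is the second step done carefully: making the $L_i$-invariance of $\tilde w_i$ interact cleanly with the $\Psi(\delta_i)$-density of $L_i$ in the \emph{variable} fibers $p^{-1}(\hat x)$ uniformly over $\hat x\in V_{x_0}$ — one must control the density constant uniformly in $\hat x$, which comes from the uniform regularity of $\hat g_{x_0,i}$ and the uniform second-fundamental-form bound on the fibers (as in \S 3.3), but it should be spelled out rather than asserted. The elliptic/Arzel\`a--Ascoli part is routine given the machinery already set up; the identification with $w_X$ is a bookkeeping argument with the commutative diagram \eqref{eqn: diag_unwrapping}.
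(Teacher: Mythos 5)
Your argument is correct, and it differs from the paper's on the key ``fiber-constancy'' step. The paper establishes that $\tilde w_\infty$ is constant along the $p$ fibers by first proving a $\Psi(\delta_i)$ bound on the \emph{tangential gradient} of $w_i$ along the $f_i$ fibers, via Fukaya's Key Lemma (the estimate that $\exp_z(tv)$ stays within $10\delta_i$ of $z$ for $t\le t_0$ when $v$ is tangent to the fiber, followed by a Taylor-type integration of $\nabla w_i$ along that geodesic), and then passing this gradient estimate to the pulled-back $\tilde w_i$; see (\ref{eqn: hi_constant_fiber}) and (\ref{eqn: wi_constant_fiber}). You instead exploit the $\Gamma_i$-invariance of $\tilde w_i$ (being a pull-back under the covering $\tilde q_i\circ$ ...) together with the $\Psi(\delta_i)$-density of the lattice orbits in $p^{-1}(\hat x)$, which converts a zero oscillation over $\Gamma_i$ (or $L_i$) orbits plus a uniform Lipschitz bound into a $\Psi(\delta_i)$ oscillation bound along the whole fiber. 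Both routes are sound. Yours is more algebraic and shorter, reusing the lattice-density fact already in \S 3.3; the paper's yields a pointwise tangential gradient estimate, which is stronger than mere oscillation control and slots naturally alongside the co-area / fiber-average bookkeeping it runs in parallel.

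Two points to tighten. First, you should quantify the density claim once rather than assert it: the $\Gamma_i$-orbit of any $\xi\in p^{-1}(\hat x)$ is $\Psi(\delta_i)$-dense in $d_{\tilde g_i}$ because the fundamental domain of $\Gamma_i$ acting on $p^{-1}(\hat x)$ is a copy of $\hat f_i^{-1}(\hat x)$, whose extrinsic diameter is $\le 3\delta_i$ and whose intrinsic diameter is comparable by the uniform second-fundamental-form bound (\ref{eqn: hat_f_regularity}); since $\tilde w_i$ is even $\Gamma_i$-invariant (not merely $L_i$-invariant), using $\gamma\in\Gamma_i$ avoids any discussion of the index $[\Gamma_i:L_i]$. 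Second, the line $\lim_i\fint_{\tilde f_i^{-1}(\hat x)}\tilde w_i$ is not literally an average over a compact set; you mean the average over $\hat f_i^{-1}(\hat x)$ (which maps onto $f_i^{-1}(x)$ with a fiber-independent sheet number), and that average is within $\Psi(\delta_i)$ of $\tilde w_i(\tilde s_i(\hat x))$ by the oscillation bound just proven — this is the same bookkeeping the paper does with (\ref{eqn: zz'}) at the end.
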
 
\begin{proof} 
Since $x_0$ is an orbifold point, the singular fibration structure
$f_i:f_i^{-1}(U_{x_0})\to U_{x_0}$ is relatively simple --- especially
(\ref{eqn: chi_X_integral}) holds around on $U_{x_0}$, and we could express
$w_X$ as the asymptotic average of values of $w_i$ on the $f_i$ fibers directly,
rather than ivoking the frame bundle strcture: by (\ref{eqn: w_i_C2}) and the
smallness of the fibers of the singular fibration $f_i:f_i^{-1}(U_{x_0})\to
U_{x_0}$, we know that
\begin{align}\label{eqn: zz'}
\forall x\in U_{x_0},\  \forall z,z'\in f_i^{-1}(x),\quad |w_i(z)-w_i(z')|\
\le\ 3C_{4.2}\delta_i,
\end{align}
and the limit $\lim_{i\to\infty} \fint_{f_i^{-1}(x)}w_i\
\darea_{f_i^{-1}(x)}$ exists for every $x\in U_{x_0}$ with uniform convergence;
on the other hand, recalling the definition of $\rho_{X,i}$ in the proof of
Proposition \ref{prop: measure_collapsing}, by (\ref{eqn: chi_X_integral}) and
(\ref{eqn: integral_limit}), we have for any $B(x,r)\subset U_{x_0}$,
\begin{align*}
\fint_{B(x,r)} w_X\ \rho_X\ \dmu_X\ =\ &\lim_{i\to \infty}
\fint_{B(x,r)}\left(\fint_{f_i^{-1}(x)}w_i\ \darea_{f_i^{-1}(x)}\right)\
\rho_{X,i}\ |M_i|_{g_i}^{-1} \dvol_{g_i}\\
=\ &\fint_{B(x,r)} \left(\lim_{i\to\infty} \fint_{f_i^{-1}(x)}w_i\
\darea_{f_i^{-1}(x)}\right)\ \rho_X\ \dmu_X;
\end{align*} 
by the continuity of $w_X$ and $\rho_X\dmu_X$, we see that 
\begin{align}\label{eqn: w_X_limit}
w_X(x)\ =\ \lim_{i\to\infty}\fint_{f_i^{-1}(x)}w_i\ \darea_{f_i^{-1}(x)}.
\end{align}

On the other hand, by the proof of the Key Lemma~\cite[Lemma 4.3]{Fukaya87}, we
could show that $w_i$ are alomst constant along the fiber $f_i^{-1}(x)\subset
M_i$. To see this, fix any $z\in f_i^{-1}(x)$, and any $v\in T_zf_i^{-1}(x)$ of
unit length, and we could consider the curve $c(t):=\exp_z tv$. By \cite[Lemma
4-8]{Fukaya87ld}, we know that there is a uniform (independent of $\delta_i$)
$t_0>0$, such that
 \begin{align}
 \sup_{t\in [0,t_0]}d_{g_i}(c(t), z)\ \le\ 10\delta_i.
 \end{align}
Therefore, (\ref{eqn: pi_hi_C2}) implies the following estimate:
 \begin{align*}
 \sup_{t\in [0,t_0]}|w_i(c(t))-w_i(z)|+\left|\langle
 \nabla_{g_i}w_i,\dot{c}(t)\rangle- \langle \nabla_{g_i}w_i(z),v\rangle\right|\ \le\
 10C_{4.3}'(C{4.2},m)\delta_i.
 \end{align*}
 This estimate implies that
 \begin{align*}
 \left|t_0\langle \nabla_{g_i}w_i(z),v\rangle\right|\ \le\ 
  &\left|\int_0^{t_0}\langle \nabla_{g_i}w_i,\dot{c}(t)\rangle\
  \text{d}t\right|+\left|\int_0^{t_0}\int_0^t\frac{\text{d}}{\text{d}s}\langle
 \nabla_{g_i}w_i,\dot{c}(s)\rangle\ \text{d}s\
 \text{d}t\right|\\
 \le\ &\left|w_i(c(t_0))-w_i(z)\right|+10C_{4.3}\delta_it_0\\
 \le\ &10(1+t_0)C_{4.3}'\delta_i,
 \end{align*}
 and consequently
 \begin{align}\label{eqn: hi_constant_fiber}
 \left|\langle \nabla_{g_i}w_i(z),v\rangle\right|\ \le\
 10(1+t_0^{-1})C_{4.3}'\delta_i.
 \end{align}
 
 We now pull every thing back to $\tilde{V}_{x_0,i}$. Since the magnitude of the
 tangential derivatives of $\tilde{w}_i$ along the $\tilde{f}_i$ fibers are
 measured with respect to the pull-back mertrics $\tilde{g}_{x_0,i}$,
 (\ref{eqn: hi_constant_fiber}) still holds trivially:
 \begin{align}\label{eqn: wi_constant_fiber}
 \sup_{\tilde{V}_{x_0,i}}|\tilde{\nabla}^T\tilde{w}_i|_{\tilde{g}_{x_0,i}}\ \le\
 \Psi(\delta_i),
 \end{align}
 where $\tilde{\nabla}^T$ denotes $\tilde{\nabla}^{LC} \tilde{w}_i$ orthogonally
 projected to the direction $T_z\tilde{f}_i^{-1}(\hat{x})$ for some $\hat{x}\in
 V_{x_0}$ and $z\in \tilde{f}_i^{-1}(\hat{x})$.
 
 Moreover, with respect to the pull-back metrics $\tilde{g}_{x_0,i}$, the
 estimates $\sup_{M_i}\|w_i\|_{C^k}\le C_{4.3}$ implies that
 $\sup_{\tilde{V}_{x_0,i}}\|\tilde{w}_i\|_{C^k}\le C_{4.3}$, and we can take a
  subsequence of $\{\tilde{w}_i\}$ converging to certain $\tilde{w}_{\infty}\in
 C^{k-1,\alpha}(W_{x_0})$, along with the convergence $(\tilde{V}_{x_0,i},
 \tilde{g}_{x_0,i}) \xrightarrow{pCG} (W_{x_0},\tilde{g}_{\infty})$ as $i\to
 \infty$. Moreover, the constancy of $\tilde{w}_{\infty}$ along each $\tilde{f}$
 fiber is guaranteed by (\ref{eqn: wi_constant_fiber}).
 
 We now show that $\tilde{w}_{\infty}=\tilde{f}^{\ast}w_X$.
 To see this, we first notice that the fiber $(\tilde{f}_i^{-1}(\hat{x}),
 \tilde{g}_{x_0,i}|_{\tilde{f}_i^{-1}(\hat{x})})$ has its intrinsic distance
 comparable to the extrinsic distance, uniformly on compact subsets of
 $\tilde{V}_{x_0}$, by a factor depending on the regularity control (\ref{eqn:
 hat_g_regularity}) on $g_i$ and the uniform control (\ref{eqn:
 hat_f_regularity}) of the second fundamental form of the $\tilde{f}_i$ fibers.
 Consequently, for any $x\in U_{x_0}$ and any $z\in \tilde{f}_i^{-1}
 (q_{x_0}^{-1}(x))$, we could estimate by (\ref{eqn: wi_constant_fiber}) with
 any fixed $z_0\in \tilde{f}_i^{-1} (q_{x_0}^{-1}(x))$ that
 \begin{align*}
 \left|\tilde{w}_i(z)-\fint_{f_i^{-1}(x)}w_i\ \darea_{f_i^{-1}(x)}\right|\ \le\
 &\ |\tilde{w}_i(z)-\tilde{w}_i(z_0)|
 +\left|\tilde{w}_i(z_0)-\fint_{f_i^{-1}(x)}w_i\ \darea_{f_i^{-1}(x)}\right|\\
\le\ &d_{\tilde{g}_{x_0,i}}(z,z_0)\Psi(\delta_i|r_{x_0}^{-1}) 
+\left|w_i(\hat{q}_i(\tilde{q}_i(z_0)))-\fint_{f_i^{-1}(x)}w_i\
\darea_{f_i^{-1}(x)}\right|,
 \end{align*}
 which asymptotically vanishes as $i\to\infty$, in view of (\ref{eqn: zz'}).
 Since $z\in \tilde{f}_i^{-1}(q_{x_0}^{-1}(x))$ is arbitrary, the proposition
 then follows from (\ref{eqn: w_X_limit}).
\end{proof}


\begin{remark}
If we only assume uniform bounds on the sectional curvature of $g_i$, then the
same conclusion should still hold for uniformly $C^2$ bounded functions, but we
will not need to push the regularity estimate to this level --- in our later
applications, the Ricci flow will provide the desired extra regularity of the
metric.
\end{remark}

\section{Density of the limit central distribution}
In this section, we will prove Theorem~\ref{thm: limit_central_density}.
Heuristically, we could think of central sub-fibrations $c_i:CM_i\to X$ of the
singular fibration $f_i:M_i\to X$, with each $c_i$ fiber being a torus orbit
lying in an $f_i$ fiber, and the limit central density function $\chi_C$ can be
defined as the asymptotic relative volume distribution of the $c_i$ fibers at
the limit; but such central sub-fibration structure cannot be constructed
globally over $X$, because to identify a single torus orbit in each $f_i$
fiber, we need to specify a base point of the fiber, but the possible occurance
of the corner singularity prevents us from consistently and smoothly choosing
such base points of the $f_i$ fibers over the entire $X$. 

In order to define the limit central density function $\chi_C$ globally over
$X$, we need to instead consider the quotient fibration and define a continuous
quotient density $\chi_Q:X\to (0,\infty)$, so that $\chi_C$ is defined as
$\chi_X\slash \chi_Q$; this construction, to be carried out in the first
sub-section, relies on the frame bundle argument in \cite[\S 3]{Fukaya87} and
the construction of the invariant metric in \cite[\S 4]{CFG92}, see also
Proposition \ref{prop: invariant_metric}.

Around an orbifold point $x\in \tilde{\mathcal{R}}$, however, $\chi_C$ admits
much better representation: in fact, locally on $U_x$, the desired central
sub-fibration $c_i:C_iU_x\to U_x$ can be constructed, and we will show in the
second sub-section that $\chi_C$ is indeed a constant multiple of the asymptotic 
relative volume distribution of the $c_i$ fibers. Consequently, in the last
sub-section we will follow the argument of \cite[Lemma 2-5]{Fukaya89} to show
that $\chi_C$ can be expressed geometrically as a constant multiple of the
volume form of the limit central distribution $\mathfrak{C}$, as discussed in
\S 3.3.2.

\subsection{Defining the limit central density function}
Consider the frame bundles $(FM_i,\bar{g}_i)$, the collapsing gives almost
Riemannian submersions $\bar{f}_i:FM_i\to Y$ with $(Y,g_Y)$ being
some lower dimensional Riemannian manifold. Recall that by \cite[Theorem
2.6]{CFG92} we can choose $\bar{f}_i$ so that they are $\Psi(\delta_i)$-almost
$O(m)$ equivariant $\Psi(\delta_i)$ Gromov-Hausdorff approximations, and that
they satisfy the uniform regularity control for any $l\in \mathbb{N}$,
\begin{align}\label{eqn: barf_regularity}
\sup_{FM_i}|\nabla^l\bar{f}_i|_{\bar{g}_i}\ \le\ C_{4.1}(l,Y),\quad
\text{and}\quad \sup_Y |II_{\bar{f}_i^{-1}(y)}|_{\bar{g}_i}\ \le\ C_{4.1}(Y). 
\end{align} 
Also recall that by the existence of a global fiber-wise flat connection, each
fiber of $\bar{f}_i$ is affine diffeomorphic to a nilmanifold $N_i\slash L_i$
with $N_i$ being some simply connected nilpotent Lie group and $L_i$ being a
cocompact lattice subgroup of $N_i$, see \cite[(6.1.10)]{CFG92}. Notice that
there is a free action by the centeral torus $\mathbb{T}_i:=C(N_i)\slash
C(L_i)$ on each $\bar{f}_i$. We further recall that by \cite[Theorem
0.6]{Fukaya87}, the collapsing sequence $\{(FM_i,\bar{g}_i)\}$ gives, after
possibly passing to a subsequence still denoted by the original one, a limit
density function $\bar{\chi}$ on $Y$, which is continuous, strictly positive
and constant along the $O(m)$ orbits, and a key property of $\chi_X$, as
illustrated by \cite[(0.7.3)]{Fukaya87}, is
\begin{align}
\chi_X^{-1}(0)\ =\ \tilde{\mathcal{S}}.
\end{align}

The limit central density function $\chi_C$ to be defined on $X$, will be
another continuous function whose zero locus also captures the entire
$\tilde{\mathcal{S}}$. This function will be constructed as the quotient of
$\chi_X$ by some continuous positive function $\chi_Q$ on $X$, which is defined
by some $O(m)$ invariant positive continuous function $\bar{\chi}_Q$ on $Y$.

To define $\bar{\chi}_Q$ on $Y$, we start with a topological consideration:
we notice that for each fiber bundle $\bar{f}_i:FM_i\to Y$, the central torus
$\mathbb{T}_i=C(N_i)\slash C(L_i)$ acts on the $N_i\slash L_i$ fibers freely by
the quotient action of the \emph{left} translations, and therefore we could
form a topological quotient $FM_i\slash \mathbb{T}_i$; since the free action is
fiber-wise, the quotient still furnishes a fiber bundle $[\bar{f}_i]:FM_i\slash
\mathbb{T}_i\to Y$ with fibers being nilmanifold affine diffeomorphic to
$(N_i\slash C(N_i))\slash (L_i\slash C(L_i))$. This bundle is still $O(m)$
equivariant: by \cite[Proposition 4.3]{CFG92}, the infinitesimal action of $N_i$
commutes with the $O(m)$ action, and as a consequence, $O(m)$ sends
$\mathbb{T}_i$ orbits to $\mathbb{T}_i$ orbits.

Notice that the Riemannian manifold $(FM_i,\bar{g}_i)$ is $\Psi(\delta_i)$ close
to $(Y,g_Y)$, with $\bar{f}_i$ being the actual Gromov-Hausdorff approximation.
Suppose that the $\mathbb{T}_i$ action is isometric, the correpsonding
quotient $FM_i\slash \mathbb{T}_i$, equipped with the quotient metric, should
be still $\Psi(\delta_i)$ close to $Y$ in the $O(m)$ equivariant
Gromov-Hausdorff sense; if we further have a uniform sectional curvature bound
of the quotient manifold $FM_i\slash \mathbb{T}_i$, we could then define the
limit density $\bar{\chi}_Q$, in the same way just as defining $\bar{\chi}$, for
the collapsing fibrations $[\bar{f}_i]: FM_i\slash \mathbb{T}_i\to Y$.

However, although the $\mathbb{T}_i$ action, being affine in each fiber, is
close to being isometric, it is \emph{not} necessarily the case. In order to
overcome this difficulty, we recall that by averaging the metrics $\bar{g}_i$
along the the $\bar{f}_i$ fibers as \cite[(4.8)]{CFG92}, there are $(N_i)_L$
invariant metrics $\bar{g}^1_i$, as stated in Proposition~\ref{prop:
invariant_metric}, satisfying
\begin{align}\label{eqn: FM_invariant_metric}
\sup_{FM_i}|\nabla^l(\bar{g}_i-\bar{g}^1_i)|\ \le\ C_{5.3}(l)\Psi(\delta_i),
\end{align}
since the collapsing limit $Y$ is a smooth manifold and each $\bar{f}_i:FM_i\to
Y$ is a smooth fiber bundle. Besides their invariance under the $\mathbb{T}_i$
actions, we also notice that the metrics $\bar{g}_i^1$ are invariant under the
$O(m)$ actions. Consequently, each $O(m)$ orbit in $FM_i$ has volume
approximately equal to $|O(m)|$ in the standard metric, as (\ref{eqn:
O(m)_approximate_volume}) shows, and by (\ref{eqn: limit_approximate_metric}) 
we also know that
\begin{align}\label{eqn: chi_Y_invariant_metric}
\forall y\in Y,\quad \chi_Y(y)\ =\ \lim_{i\to \infty}
\frac{|\bar{f}_i^{-1}(y)|_{\bar{g}_{i|}^1}}{|FM_i|_{\bar{g}_i^1}}.
\end{align}

By the $\mathbb{T}_i$ invariance, each metric $\bar{g}_i^{1}$ indeed descend to
a quotient metric $[\bar{g}_i^{1}]$ on $FM_i\slash \mathbb{T}_i$, which remains
to be $O(m)$ equivariant. Notice that since a quotient map does not increase the
distance, we have $(FM_i\slash \mathbb{T}_i,[\bar{g}_i^{1}])$ being
$\Psi(\delta_i)$ close to $(Y,g_Y)$ in the Gromov-Hausdorff topology, with the
 $[\bar{f}_i]$ providing the $O(m)$ equivariant Gromov-Hausdorff
approximation.

Morevoer, the sectional curvature of $(FM_i\slash \mathbb{T}_i,[\bar{g}_i^1])$
can be uniformly (independent of $i$) controlled accordingly. Since
$\mathbb{T}_i$ is abelian, each $\mathbb{T}_i$ orbit, equipped with the
invariant metric which is the restriction of $\bar{g}_i^{1}$, is actually flat.
Therefore, by the approximation (\ref{eqn: FM_invariant_metric}), the uniform
regularity of $\bar{g}_i$ (\ref{eqn: gbar_regularity}) and O'Neill's formula,
we could uniformly bound the sectional curvature of the quotient metric
$[\bar{g}_i^{1}]$ provided that we have have a uniform estimate on the second
fundamental form of each $\mathbb{T}_i$ orbit. To obtain such estimate, we
notice that each $\mathbb{T}_i$ orbit sits in some $\bar{f}_i$ fiber, and the
$(N_i)_L$ invariance of the metric $\bar{g}_i^1$ tells that the estimate
consists of two parts: those directions orthogonal to the $\bar{f}_i$ fibers,
and those directions within each $\bar{f}_i$ fiber but orthogonal to the
$\mathbb{T}_i$ orbits. Along the directions orthogonal to the $\bar{f}_i$
fibers, since $(Y,g_Y)$ is a closed Riemannian manifold, there is a uniform
(independent of $i$) lower bound of the injectivity radius for each point in
$Y$, whence a uniform (independent of $i$) upper bound of the second fundamental
form in the directions perpendicular to the $\bar{f}_i$ fibers, controlled by
(\ref{eqn: barf_regularity} and (\ref{eqn: FM_invariant_metric}). In the
directions within the $\bar{f}_i$ fiber but perpendicular to a given
$\mathbb{T}_i$ orbit, by (\ref{eqn: FM_invariant_metric}), we notice that
$\bar{\nabla}^{LC1}_i$, the Levi-Civita connection for $\bar{g}^1_i$, is close
to $\bar{\nabla}^{LC}_i$, the Levi-Civita connection of $\bar{g}_i$, and the
connection $\bar{\nabla}^{\ast}_i$, which is fiber-wise flat with parallel
tortion and defines the fiber-wise affine structure, is also $C^{\infty}$ close
to $\bar{\nabla}^{LC}$, see \cite[Proposition 3.6]{CFG92}; therefore, since
tagent vectors tangent to the $\mathbb{T}_i$ orbits generate \emph{left}
invariant vector fields along the $\mathbb{T}_i$, they are
$\bar{\nabla}^{\ast}_{i}$ parallel, and consequently, the
$\bar{\nabla}^{LC1}_{i}$-covariant derivative of \emph{left} invariant vector
fields along the $\mathbb{T}_i$ orbits are of uniformly controlled size.
Summarizing, we see that
\begin{align}\label{eqn: II_form_T}
\sup_{z\in FM}|II_{\mathbb{T}_i(z)}|\ \le\ C_{5.5},
\end{align}
whence a desired uniform (independent of $i$) curvature bound of $(FM_i\slash
\mathbb{T}_i,[\bar{g}_i^1])$.

 Therefore, $\{(FM_i\slash \mathbb{T}_i,[\bar{g}_i^1])\}$ is a sequence
 collapsing to $(Y,g_Y)$ with bounded curvature and diameter, and we could
 define a limit weighted density function $\bar{\chi}_Q$ on $Y$ as
\begin{align*}
\forall y\in Y,\quad \bar{\chi}_Q(y)\ =\
\lim_{i\to \infty}
\frac{\left|[\bar{f}_i]^{-1}(y)\right|_{[\bar{g}_i^{1}]}}{ |FM_i\slash
\mathbb{T}_i|_{[\bar{g}_i^{1}]}}.
\end{align*}
Notice that here we may have passed to a sub-sequence. By \cite[Theorem
0.6]{Fukaya87}, $\bar{\chi}_Q$ is positive and continuous on $Y$, and is
constant along the $O(m)$ orbits. 
Consequently, $\bar{\chi}_Q$ naturally descends to a continous and positive
function $\chi_Q$ on $X$, with $\bar{\chi}_Q =\pi^{\ast}_Y\ \chi_Q$, where
$\pi_Y: Y\to X$ is the quotient map.
Now we define the \emph{limit central density function} $\chi_C:=\chi_X\slash
\chi_Q$. Clearly $\chi_C$ is a continuous and non-negative function on $X$, and
for any $x\in \mathcal{R}$, by the constancy of $\bar{\chi}_Q$ on $O(m)$
orbits, we have
\begin{align}\label{eqn: chi_bar_chi}
\chi_C(x)\ =\ \int_{\pi_Y^{-1}(x)}\frac{\bar{\chi}}{\bar{\chi}_Q}\
\darea_{\pi^{-1}(x)}.
\end{align}
Consequently, $\chi_C$ also characterizes
$\tilde{\mathcal{S}}$ as its zero locus:
\begin{align}
\chi_C^{-1}(0)\ =\ \tilde{\mathcal{S}}.
\end{align}

\subsection{Local representation over the orbifold regular part}
The limit central density function $\chi_C$ we just defined is global, 
continuous and characterizes $\tilde{\mathcal{S}}$ as its zero locus; however,
the definition via taking quotient rarely provides any insight into the local
geometry. In this sub-section, we would like to further understand the local
behavior of $\chi_C$ on $\tilde{\mathcal{R}}$ by taking locally defined central
sub-bundles of the fibrations $f_i:M_i\to X$, which is the $O(m)$ equivariant
quotient of $\bar{f}_i:FM_i\to Y$.

\subsubsection{A local central sub-bundle} To begin with, we first consider a
fixed $i$ and omit writing the index $i$ for a while. We also fix some $x_0\in
\tilde{\mathcal{R}}$, i.e. $x_0$ may be a regular point or an orbifold singular
point in $X$. 
Recall that by Proposition \ref{prop: unwrapping} we have the following data: an
orbifold neighborhood $U_{x_0}$ with an orbifold covering $q_{x_0}:V_{x_0}\to
U_{x_0}$, a finite group $G_{x_0}$ acting on $V_{x_0}$ giving the quotient, a
$G_{x_0}$ invariant metric $\hat{g}_{x_0}$ on $V_{x_0}$ that descends to
the metric on $X$, a $G_{x_0}$ equivariant fiber bundle
$\hat{f}:\hat{V}_{x_0}\to V_{x_0}$ with infranil fibers, and a local section
$\hat{s}^{\ast}: V_{x_0}\to \hat{V}_{x_0}$. Moreover, there is a $G_{x_0}$
equivariant connection $\hat{\nabla}^{\ast}$ whose restriction
$\hat{\nabla}^{\ast}_{\hat{x}}$ to $\hat{f}^{-1}(\hat{x})$ is flat with
parallel torsion, for each $\hat{x}\in V_{x_0}$.

The local section $\hat{s}^{\ast}$ helps us construct the local central
sub-bundle $\hat{c}:C\hat{V}_{x_0}\to V_{x_0}$ which is $G_{x_0}$ equivariant.
More specifically, since $\hat{\nabla}^{\ast}$ makes each $\hat{f}$ fiber into
a homogeneous space $N\slash \Gamma$, on which $N_L$ acts, we may focus on the
action of the center sub-group $C(N)$. At each point in a $\hat{f}$ fiber, the
orbit of the $C(N)$ action is nothing but a $k_0$-dimensional torus
$\mathbb{T}=C(N)\slash (C(N)\cap \Gamma)$. Now for each $\hat{x}$, we specify a
torus orbit that passes through $\hat{s}^{\ast}(\hat{x})$, denoted by
$\mathbb{T}(\hat{s}^{\ast}(\hat{x}))$.
Letting $\hat{x}\in V_{x_0}$ vary, we can form a subset of $\hat{V}_{x_0}$:
\begin{align*}
C\hat{V}_{x_0}\ :=\ \bigcup_{\hat{x}\in V_{x_0}}\mathbb{T}
(\hat{s}^{\ast}(\hat{x})).
\end{align*}
Notice that each $\mathbb{T}(\hat{s}^{\ast}(\hat{x}))$ is affine diffeomorphic
to $C(N)\slash C(L)$, as specified by the flat connection
$\hat{\nabla}^{\ast}_{\hat{x}}$. To see that $C\hat{V}_{x_0}$ is a smooth
sub-manifold of $\hat{V}_{x_0}$, we notice that the center sub-algebra of the
fiber-wise nilpotent Lie algebra can be characterized as the kernel of the
torsion tensor $T_|$ of $\hat{\nabla}^{\ast}_|$ (denoting 
the restriction of corresponding objects to an arbitrary $\hat{f}$ fiber).
By the smoothness of $\hat{\nabla}^{\ast}$, we know that $T_|$ varies smoothly
throughout $\hat{V}_{x_0}$, therefore specifing a smooth distribution of
commuting vector fields along the $\hat{f}$ fibers, and integrating these vector
fields we get the leaves as torus orbits within the $\hat{f}$ fibers. Now the
smoothness of $C\hat{V}_{x_0}$ is determined by the smoothness of
$\hat{s}^{\ast}$: these are the initial values telling us which leaf in each
$\hat{f}$ fiber to choose. With this understanding, we clearly see that the
map
\begin{align*}
\hat{c}:\ C\hat{V}_{x_0}\ \to\ V_{x_0}
\end{align*}
sending each $\mathbb{T}(\hat{s}^{\ast}(\hat{x}))$ to $\hat{x}\in V_{x_0}$ is a
smooth fibration with $\hat{c}^{-1}(\hat{x})
=\mathbb{T}(\hat{s}^{\ast}(\hat{x}))$.

Moreover, the fibration $\hat{c}:C\hat{V}_{x_0}\to V_{x_0}$ is $G_{x_0}$
equivariant: recall that the action of $G_{x_0}$ on the fibers is
decomposed into two parts --- a finite central rotation part $S_{x_0}\le
\mathbb{T}$ and a finite automorphism part $\Lambda_{x_0}\le Aut(\Gamma)$; since
$\Lambda_{x_0}$ sends $\Gamma$ orbits to $\Gamma$ orbits in the corresponding
$\tilde{f}$ fibers, elements of $\Lambda_{x_0}$ determine affine diffeomorphisms
between the corresponding $\tilde{f}$ fibers, by Malcev's rigidity theorem
(\cite[Theorem 3.7]{CFG92}), and consequently an element of $\Lambda_{x_0}$
sends an entire $\mathbb{T}$ orbit to a $\mathbb{T}$ orbit in the corresponding
$\hat{f}$ fiber; on the other hand, elements in $S_{x_0}$ only rotates the
$\mathbb{T}$ orbit, therefore keeping the entire orbit invariant. Equivalently,
we have $\forall g\in G_{x_0}$ and $\forall \hat{x}\in V_{x_0}$,
\begin{align*}
g.\mathbb{T}(\hat{s}^{\ast}(\hat{x}))\ =\ \mathbb{T}(g.\hat{s}^{\ast}(\hat{x}))\
=\ \mathbb{T}(\hat{s}^{\ast}(g.\hat{x})),
\end{align*}
whence the $G_{x_0}$ equivariance of the fibration $\hat{c}$. Notice that
it may be the case that $g.\hat{s}^{\ast}(\hat{x})\not=
\hat{s}^{\ast}(g.\hat{x})$, due to the possibly non-trivial part in $S_{x_0}$;
but the corresponding $\mathbb{T}$ orbits have to agree, since elements
$S_{x_0}$ only rotate the $\mathbb{T}$ orbits. As a consequence,
$\hat{c}:C\hat{V}_{x_0}\to V_{x_0}$ descends to a singular fibration $c:
CU_{x_0}\to U_{x_0}$, where $CU_{x_0}=C\hat{V}_{x_0}\slash G_{x_0}$ is a smooth
sub-manifold of $M$. A regular $c$ fiber is diffeomorphic to $\mathbb{T}$,
while a singular fiber $c^{-1}(x)$ is diffeomorphic to $\mathbb{T}\slash G_{x}$
with $G_x\le G_{x_0}$ being the isotropy group of $x\in
\tilde{\mathcal{R}}\backslash \mathcal{R}$.

In order to connect the locally constructed central sub-bundle with the limit
cnetral density function, we still need to explain its relation with the frame
bundle $FM$ restricted to the sub-manifold $CU_{x_0}$. Recall that
$G_{x_0}$ can be regarded as a normal sub-group of $O(m)$, and we have the
following commutative diagram, in coorespondence to (\ref{eqn: diagram_f}):
\begin{align}\label{eqn: diagram_c}
\begin{xy}
<0em,0em>*+{V_{x_0}}="v", <-7em,2em>*+{C\hat{V}_{x_0}}="w",
<0em,-5em>*+{U_{x_0}}="u", <-7em,-3em>*+{CU_{x_0}}="z",
<5em,7em>*+{FM|_{CU_{x_0}}}="fm", <12em,6em>*+{\pi^{-1}(U_{x_0})}="p",
 "w";"v" **@{-} ?>*@{>}?<>(.5)*!/_0.5em/{\scriptstyle \hat{c}}, 
 "w";"z" **@{-} ?>*@{>} ?<>(.5)*!/^0.5em/{\scriptstyle \slash G_{x_0}\ \
 }, 
 "v";"u" **@{-} ?>*@{>} ?<>(.5)*!/^0.5em/{\scriptstyle q_{x_0}}, 
 "z";"u" **@{-} ?>*@{>} ?<>(.5)*!/_0.5em/{\scriptstyle c},
  "fm";"p" **@{-} ?>*@{>} ?<>(.5)*!/_0.5em/{\scriptstyle \bar{c}},
 "fm";"w" **@{-} ?>*@{>} ?<>(.5)*!/^0.5em/{\scriptstyle \hat{\pi}}, 
  "p";"v" **@{-} ?>*@{>} ?<>(.5)*!/^0.5em/{\scriptstyle \hat{\pi}_Y}, 
  "p";"u" **@{-} ?>*@{>} ?<>(.5)*!/_0.5em/{\scriptstyle \pi_Y}, 
  "fm";"z" **@{--} ?>*@{>} ?<>(.5)*!/_0.5em/{\scriptstyle \pi}
\end{xy}
\end{align}
Here $\hat{\pi}$ and $\hat{\pi}_Y$ are taking quotients by the group action
$O(m)\slash G_{x_0}$. Let us further explain the fibration $\bar{c}:
FM|_{CU_{x_0}}\to \pi_Y^{-1}(U_{x_0})\subset Y$ given by restricting $\bar{f}$
to $FM|_{CU_{x_0}}$. Considering the natural $O(m)$ equivariant fibration
$\bar{f}:FM\to Y$ associated to $f:M\to Y$, each whose fibers being affine
diffeomorphic to a nilmanifold, and recalling that in defining the quotient
fibration $[\bar{f}]: FM\slash \mathbb{T}\to Y$, we also have an action of the
central torus $\mathbb{T}$ on each $\bar{f}$ fiber. In order to express
$\chi_C$ in terms of the limit relative volume distribution of the central
sub-bundles of the frame bundle, we need to check that $FM|_{CU_{x_0}}$
consists of entire $\mathbb{T}$ orbits, one in each $\hat{f}$ fiber over
$\pi_Y^{-1}(U_{x_0})$.
To see this, for any $\hat{x}\in V_{x_0}$, we have the embedded sub-manifold 
\begin{align*}
S_{\hat{x}}\ :=\ \hat{\pi}^{-1}(\hat{f}^{-1}(\hat{x}))\ =\
\bar{f}^{-1}(\hat{\pi}_Y^{-1}(\hat{x})),
\end{align*} 
which is invariant under the $O(m)$ actions and infinitesimal $N_L$ actions. 
Since $C(N)\slash C(L)=\mathbb{T}$ is compact, the infinitesimal
action of $C(N)$ on $S_{\hat{x}}$ integrates to the action by the compact
abelian group $\mathbb{T}$. By \cite[Proposition 4.3]{CFG92}, we know that the
$O(m)$ action commutes with the $\mathbb{T}$ action. Therefore, picking any 
$z\in \hat{\pi}^{-1}(\hat{c}^{-1}(\hat{x}))\subset S_{\hat{x}}$, we know that a
typical $\pi$ fiber in $FM|_{CU_{x_0}}$ is 
\begin{align*}
\hat{\pi}^{-1}(\hat{c}^{-1}(\hat{x}))\ =\ O(m)(\mathbb{T}(z))\ =\
\mathbb{T}(O(m)(z)),
\end{align*}
which is a union of $\mathbb{T}$ orbits. On the other hand, by the $O(m)$
equivariance of $\bar{f}$, we know that
\begin{align*}
\bar{f}(\hat{\pi}^{-1}(\hat{c}^{-1}(\hat{x})))\ =\ O(m)(\bar{f}(z))\ \subset\
\pi_Y^{-1}(U_{x_0}).
\end{align*}
Therefore, we have verified that $\bar{c}:FM|_{CU_{x_0}}\to \pi_Y^{-1}(U_{x_0})$
is indeed an $O(m)$ equivariant fiber bundle, each of whose fiber being a
$\mathbb{T}$ orbit in the fiber of $\bar{f}$.

\subsubsection{Quantitative and limit behavior of the local central sub-bundle} 
To study the metric measure structure related to $\chi_C$, we again appeal to
the approximating invariant metric $\bar{g}^{1}$ defined on $FM$, see
Proposition~\ref{prop: invariant_metric}. Recall that $\bar{g}^{1}$ is
$\Psi(\delta)$ close to $\bar{g}$ in the $C^{\infty}$ sense, see (\ref{eqn:
FM_invariant_metric}), and it is invariant under both the (infinitesimal) $N_L$
and $O(m)$ actions.
We could then put the Riemannian metric $\bar{g}^{1}_|$, the restriction of
$\bar{g}^{1}$ to each $\bar{c}$ fiber, making $(FM|_{CU_{x_0}},\bar{g}^{1}_|)$
an embedded Riemannian sub-manifold of $(FM,\bar{g}^1)$, fibering over
$(\pi_Y^{-1}(U_{x_0}),g_Y)$ by the Riemannian submersion $\bar{c}$ --- since the
fiber-wise $\mathbb{T}$ action leaves $\bar{g}^{1}_|$ invariant, and
$\bar{g}^{1}$ is taken as the average of $\bar{g}$ along the $\bar{f}$ fibers,
the quotient metric of $\bar{g}^1$ coincides with $g_Y$.



 The invariance of $\bar{g}^{1}$ and the commutativity of the infinitesimal
 $N_L$ and $O(m)$ actions ensure that for each $y\in \pi_Y^{-1}(U_{x_0})$,
\begin{align}\label{eqn: fiber_vol_prod}
\left|\bar{f}^{-1}(y)\right|_{\bar{g}^{1}}\ =\ 
\left|\bar{c}^{-1}(y)\right|_{\bar{g}^{1}_|} 
\left|[\bar{f}]^{-1}(y)\right|_{[\bar{g}^{1}]},
\end{align}
where we recall that $[\bar{f}]:(FM,[\bar{g}^{1}])\slash \mathbb{T}\to
(Y,g_Y)$ is the quotient bundle of $FM$ by the isometric $\mathbb{T}$ action on
each fiber, equipped with the quotient metric $[\bar{g}^{1}]$.

We notice that the Riemannian submersion $\bar{c}:(FM|_{CU_{x_0}},\bar{g}^1_|)
\to (\pi_Y^{-1}(U_{x_0}),g_Y)$ is a $\Psi(\delta)$-Gromov-Hausdorff
approximation, and that $(FM|_{CU_{x_0}},\bar{g}^1_|)$ has uniformly bounded
sectional curvature, thanks to the uniform second fundamental form estimate
\begin{align*}
\sup_{z\in FM|_{f^{-1}(U_{x_0})}}|II_{\mathbb{T}(z)}|\ \le\
C_{5.5},
\end{align*}
which can be derived in a way similar to (\ref{eqn: II_form_T}). Consequently,
by \cite[(3.5)]{Fukaya87} we see that there is a uniform constant
$C_{5.10}(x_0)>1$ depending only on $x_0\in X$ (especially independent of
$\delta =d_{GH}(M,X)$), such that
\begin{align}
\sup_{y\in \pi_Y^{-1}(U_{x_0})}|\bar{c}^{-1}(y)|_{\bar{g}^{1}_|}\ \le\ 
C_{5.10}(x_0)\inf_{y\in \pi_Y^{-1}(U_{x_0})}|\bar{c}^{-1}(y)|_{\bar{g}^{1}_|}.
\end{align}
Now by the co-area formula and integrating (\ref{eqn: fiber_vol_prod}) over
$\pi_Y^{-1}(U_{x_0})$, we see that for any $y\in \pi_Y^{-1}(U_{x_0})$,
\begin{align*}
C_{5.10}(x_0)^{-1}\ \le\ 
\frac{\left|\bar{f}^{-1}(\pi_Y^{-1}(U_{x_0}))\right|_{\bar{g}^{1}}}
{\left|[\bar{f}]^{-1}(\pi_Y^{-1}(U_{x_0}))\right|_{[\bar{g}^{1}]}
\left|\bar{c}^{-1}(y)\right|_{\bar{g}^{1}_|}}\ \le\ C_{5.10}(x_0)
\end{align*}
and therefore, integrating over $\pi_Y^{-1}(U_{x_0})$ again we get,
\begin{align} \label{eqn: local_measure_product}
C_{5.10}(x_0)^{-1}\ \le\ C_{5.11}(x_0)\ :=\ 
\frac{\left|\bar{f}^{-1}(\pi_Y^{-1}(U_{x_0}))\right|_{\bar{g}^{1}}
|\pi_Y^{-1}(U_{x_0})|_{g_Y}}{\left|[\bar{f}]^{-1}(\pi_Y^{-1}(U_{x_0}))
\right|_{[\bar{g}^{1}]} \left|FM|_{CU_{x_0}}\right|_{\bar{g}^{1}_|}}\ \le\
C_{5.10}(x_0).
\end{align}

On the other hand, since the Riemannian metric $\bar{g}^{1}$ is obtained
from $\bar{g}$ by averaging along the $N$ directions, and both are $O(m)$
invariant, there is a natural $\mathbb{T}$ invariant metrics $\hat{g}^{1}_|$
defined on $C\hat{V}_{x_0}$, which is the restriction to the central sub-bundle
of the invariant metric defined as \cite[(4.8)]{CFG92}. Moreover, by the
definition of $\bar{g}$ and $\bar{g}^1$, we know that the $O(m)$ orbits in
$\bar{c}^{-1}(\pi_Y^{-1}(U_{x_0})) =FM|_{CU_{x_0}}$ has volume approximated by 
$|O(m)|$ in its standard metric, see (\ref{eqn: O(m)_approximate_volume}). 
Therefore, for any $\hat{x}\in V_{x_0}$, by (\ref{eqn: diagram_c}) we have
$\hat{\pi}_Y^{-1}(\hat{x})=\pi_Y^{-1}(q_{x_0}(\hat{x}))\subset Y$, and can
compute the volume of the closed submanifold $\bar{c}^{-1}
(\hat{\pi}_Y^{-1}(\hat{x})) =FM|_{c^{-1}(q_{x_0}(\hat{x}))}\subset FM$ by
Fubini's theorem:
\begin{align}\label{eqn: Fubini}
\begin{split}
\int_{\hat{\pi}_Y^{-1}(\hat{x})}|\bar{c}^{-1}(y)|_{\bar{g}^1_|}\
\darea_{\hat{\pi}_Y^{-1}(\hat{x})}(y)\ =\ &\int_{\hat{c}^{-1}(\hat{x})}
|\hat{\pi}^{-1}(z)|\ \darea_{\hat{c}^{-1}(\hat{x})}(z)\\
=\ &(1+\Psi(\delta|r_{x_0}^{-1}))|G_{x_0}|^{-1}|O(m)|
|\hat{c}^{-1}(\hat{x})|_{g^1_|},
\end{split}
\end{align}
where for each $z\in \hat{c}^{-1}(\hat{x})$, $|\hat{\pi}^{-1}(z)|
=(1+\Psi(\delta|r_{x_0}^{-1}))|G_{x_0}|^{-1}|O(m)|$ by (\ref{eqn:
O(m)_approximate_volume}) and (\ref{eqn: diagram_c}); see also (\ref{eqn:
FM_Fubini}).

Now we restore the index $i$ and the density functions $\bar{\chi}$ and
$\bar{\chi}_Q$ are defined respectively by the limit weighted volume of the
fibers of $\bar{f}_i$ and $[\bar{f}_i]$. From (\ref{eqn: diagram_c}) and the
singular nature of $U_{x_0}$, our goal will be to construct a $G_{x_0}$
invariant function $\hat{\chi}_C$ on $V_{x_0}$, so that it descends to a
constant multiple of $\chi_C$ on $U_{x_0}$. The function $\hat{\chi}_C$ could be
defined as the asymptotic relative volume distribution of the $\hat{c}_i$
fibers, in a similar way to (\ref{eqn: hat_chi}):
\begin{align}\label{eqn: hat_chi_C}
\forall \hat{x}\in V_{x_0},\quad \hat{\chi}_C(\hat{x})\ :=\ \lim_{i\to
\infty}\frac{\left|\hat{c}_i^{-1}(\hat{x})\right|_{\hat{g}_{i|}^1}}
{\left|C\hat{V}_{x_0,i}\right|_{\hat{g}_{i|}^1}}.
\end{align}
Here notice that the metrics $\hat{g}_{i|}$ enjoy the uniform regularity control
due to (\ref{eqn: II_form_T}). Moreover, defining $\hat{\chi}_C$ using
$\hat{g}_{i|}^1$ or $\hat{g}_{i|}$ makes no difference, in view of Proposition
\ref{prop: invariant_metric}. We may have already passed to a further
sub-sequence in taking limit, and $\hat{\chi}_C$ could be thus defined because
$\{(C\hat{V}_{x_0,i},\hat{g}_{i|}^1)\}$ has uniform curvature bound, and the
collapsing fibration $\hat{c}_i:C\hat{V}_{x_0,i}\to V_{x_0}$ is \emph{regular},
over the open set $V_{x_0}\subset \mathbb{R}^n$, with
$d_{GH}(C\hat{V}_{x_0,i},V_{x_0})\le \Psi(\delta_i)$.
Our next goal is then to express $q_{x_0}^{\ast} (\chi_C|_{U_{x_0}})$ as a
constant multiple of $\hat{\chi}_C$ on $V_{x_0}$.

To achieve this, we start with understanding $\bar{\chi}$ on
$\pi_Y^{-1}(U_{x_0})$. From (\ref{eqn: fiber_vol_prod}) and (\ref{eqn:
local_measure_product}) we see for any $y\in \pi_Y^{-1}(U_{x_0})$ that
\begin{align}\label{eqn: bar_chi_y}
\begin{split}
\bar{\chi}(y)\ =\ &\lim_{i\to
\infty}\frac{\left|\bar{f}_i^{-1}(\pi_Y^{-1}(U_{x_0}))\right|_{\bar{g}^{1}_i}}
{|FM_i|_{\bar{g}^{1}_i}}
\frac{\left|\bar{f}_i^{-1}(y)\right|_{\bar{g}^{1}_i}}
{\left|\bar{f}_i^{-1}(\pi_Y^{-1}(U_{x_0}))\right|_{\bar{g}^{1}_i}}\\
=\ &\frac{\mu_X(U_{x_0})}{\mu_X(X)}\lim_{i\to \infty}
\frac{\left|\pi_Y^{-1}(U_{x_0})\right|_{g_Y}}{C_i'}
\frac{\left|[\bar{f_i}]^{-1}(y)\right|_{[\bar{g}^1_i]}}
{\left|[\bar{f}]^{-1}(\pi_Y^{-1}(U_{x_0}))\right|_{[\bar{g}^{1}]}}
\frac{\left|\bar{c}^{-1}_i(y)\right|_{\bar{g}^{1}_{i|}}}
{\left|FM_i|_{C_iU_{x_0}}\right|_{\bar{g}_{i|}^{1}}}\\
 =\ &C_{5.14}(x_0)\frac{\mu_X(U_{x_0})}{\mu_X(X)}
 \left(\int_{\pi_Y^{-1}(U_{x_0})} \bar{\chi}_Q\ \dvol_{g_Y}\right)^{-1}
 \bar{\chi}_Q(y)
\lim_{i\to \infty}
\frac{\left|\bar{c}^{-1}_i(y)\right|_{\bar{g}^{1}_{i|}}}
{\left|FM_i|_{C_iU_{x_0}}\right|_{\bar{g}_{i|}^{1}}},
\end{split}
\end{align} 
where $C_{5.14}(x_0):=|\pi_Y^{-1}(U_{x_0})|_{g_Y}C_{5.11}(x_0,\infty)^{-1}$,
with some 
\begin{align*}
C_{5.11}(x_0,\infty)\ :=\ \lim_{i\to \infty} C_{5.11}(x_0,i)\ \in\ 
[C_{5.10}(x_0)^{-1},C_{5.10}(x_0)]
\end{align*} 
as the limit (possibly passing to a sub-sequence) determined by the quantity in
(\ref{eqn: local_measure_product}), and the next factor in the same line is a
consequence of the facts that $\bar{f}_i^{-1}(\pi_Y^{-1}(U_{x_0}))
=FM_i|_{f_i^{-1}(U_{x_0})}$ and that $U_{x_0}\subset \tilde{\mathcal{R}}$.
 Consequently, for any $x\in U_{x_0}\cap \mathcal{R}$ and any $\hat{x}\in
 q_{x_0}^{-1}(x)$, since $\pi_Y^{-1}(x)=\hat{\pi}^{-1}_Y(\hat{x})\subset Y$, by
 (\ref{eqn: bar_chi_y}), (\ref{eqn: chi_bar_chi}) and (\ref{eqn: Fubini}), we
 deduce, in a similar manner leading to (\ref{eqn: Fubini_f}), that
\begin{align*}
\begin{split}
 \chi_C(x)\ 
=\ C_{5.2}(x_0) \lim_{i\to \infty}
\frac{\left|\hat{c}_i^{-1}(\hat{x})\right|_{g^1_{i|}}}
{\left|C\hat{V}_{x_0,i}\right|_{g^1_{i|}}},
\end{split}
\end{align*}
where the coefficient
\begin{align*}
C_{5.2}(x_0)\ :=\ 
C_{5.14}(x_0)\frac{\mu_X(U_{x_0})}{\mu_X(X)}
\left(\int_{\pi_Y^{-1}(U_{x_0})} \bar{\chi}_Q\ \dvol_{g_Y}\right)^{-1} 
\end{align*} 
is a constant independent of $x\in U_{x_0}\cap \mathcal{R}$. This tells
that $q_{x_0}^{\ast}(\chi_C|_{U_{x_0}\cap \mathcal{R}})= \hat{\chi}_C$ on
$V_{x_0}\cap q_{x_0}^{-1}(U_{x_0}\cap \mathcal{R})$. But by the same argument
leading to (\ref{eqn: chi_hat_chi}), we know that the above identity extends
all over $V_{x_0}$ --- $\mathcal{R}\cap U_{x_0}$ is dense in $U_{x_0}$ and
$\chi_C$ is continuous on $U_{x_0}$ --- and we have
\begin{align}\label{eqn: rep_hat_chi_C}
q_{x_0}^{\ast}\ (\chi_C|_{U_{x_0}})\ =\ C_{5.2}(x_0)\ \hat{\chi}_C\quad
\text{on}\ V_{x_0}.
\end{align}

\subsection{Limit central distribution and its density} We recall that the 
construction of the the central sub-bundles $\hat{c}_i:C\hat{V}_{x_0,i}\to
V_{x_0}$ in \S 5.2.1 are based on the same local section $\hat{s}^{\ast}_i$
employed in the local trivialization, as discussed in \S 3.2 (especially \S
3.2.2). Moreover, in trivializing the unwrapped neighborhoods
$\tilde{V}_{x_0,i}$, such local sections are lifted to $\tilde{s}_i:
V_{x_0,i}\to \tilde{V}_{x_0,i}$. It is therefore straightforward to see that
the central sub-bundles $C\hat{V}_{x_0,i}$ can also be lifted to sub-bundles
$\tilde{c}_i: C\tilde{V}_{x_0,i}\to V_{x_0}$ of $\tilde{V}_{x_0,i}$.
Each fiber $\tilde{c}_i^{-1}(\hat{x})= C(N_i)(\tilde{s}_i(\hat{x}))\subset
\tilde{f}_i^{-1} (\hat{x})$ is then affine diffeomorphic to $C(N_i)\triangleleft
N_i$, when equipped with the flat connection as the restriction of
$\tilde{\nabla}^{\ast}_{i,\hat{x}}$ to the sub-manifold, and with
$\tilde{s}_i(\hat{x})$ chosen as the base point. In fact, by the way we define
the local trivializations $\phi_{x_0,i}: W_{x_0}\to \tilde{V}_{x_0}$ according
to the lifted sections $\tilde{s}_i$, we have associated trivializations of
$C\tilde{V}_{x_0,i}$ by directly restricting $\phi_{x_0,i}$ to $CW_{x_0}
:=V_{x_0} \times \mathbb{R}^{k_0}$.
For each induced connection $\tilde{\nabla}_i$ on $W_{x_0}$ (see \S 3.3.1), the
null space of the torsion tensor (of $\tilde{\nabla}_i$) defines a foliation of
$W_{x_0}$, and $CW_{x_0}$ can also be characterized as a smoothly parametrized
family of its leaves passing through $\tilde{s}_i$ within each $p$ fiber ---
recalling that $p:W_{x_0}\to V_{x_0}$ is the projection to the first factor of
$W_{x_0}=V_{x_0}\times \mathbb{R}^{m-n}$.

By the way we define the metrics and connections on $C\hat{V}_{x_0,i}$, it is
obvious that the lifted metrics and connections on $C\tilde{V}_{x_0,i}$ are
nothing but the restriction of the lifted metrics $\tilde{g}_i$ and connections
$\tilde{\nabla}_i$ to the fibers of $\tilde{c}_i$. The regularity of the
restricted metrics $\{\tilde{g}_{i|}\}$ and connections
$\{\tilde{\nabla}_{i|}\}$ are then readily controlled as that of $\tilde{g}_i$
and $\tilde{\nabla}_i$, in view of the uniform second fundamental form control
(\ref{eqn: II_form_T}).

Since each $\tilde{c}_i$ fiber is equipped with the restricted connection of
$\tilde{\nabla}_i$, the action of $\Gamma_i$ on the $p$ fibers restricts to an
action on the $\tilde{c}_i$ fibers by affine isometries. Since $\Gamma_i$ is a
finite extension of a cocompact lattice $L_i\le (N_i)_L$, the translation part
of its action on the $\tilde{c}_i$ fibers is by $C(L_i)=L_i\cap C(N_i)$. Letting
$G_i:=\Gamma_i\slash L_i$, we know that $|G_i|\le C(m,n)$ and
$G_i\le Aut(N_i)$. Moreover, since $\Gamma_i\le N_L\rtimes Aut(N_i)$ and
$G_i$ preserves the center $C(N_i)$, we have $C\Gamma_i:= C(L_i)\rtimes
G_i$ as the fundamental group of the $c_i$ fibers, acting on the
$\tilde{c}_i$ fibers by affine isometries, with respect to the restricted
metrics and connections.

We could therefore equip $C\Gamma_i$ with a metric restricted from $Aff(C(N_i))=
C(N_i)\rtimes Aut(N_i)$, where the size of the $Aut(N_i)$ part is measured by
the standard metric on $O(m-n)$. Since the action of $C\Gamma_i$, when restriced
on each $\tilde{c}_i$ fiber, preserves the lattice $C(L_i)\subset C(N_i)$,
which is isomorphic to an integral lattice in the abelian group
$(\mathbb{R}^{k_{0}},+,o)$, we can think of the $G_i$ part of the
$C\Gamma_i$ action as in $GL(k_0,\mathbb{Z})$, and therefore, by the uniform
upper bound of $|G_i|$, we know that elements of $C\Gamma_i$ has, for
their $G_i$ part, a uniform lower bound $3\varepsilon_0(m,n)$ in norm
(see \cite{BuserKarcher81}), independent of $i$ and $x\in U_{x_0}$.

We also recall the definition of the limit central distribution $\mathfrak{C}$
in \S 3.3.2, and notice that the leaves of $\mathfrak{C}$ passing through
$V_{x_0}\times \{o\}$ are exactly the fibers of $CW_{x_0}$. Moreover, by
(\ref{eqn: H_convergence}) we have 
\begin{align}
\forall \hat{x}\in V_{x_0},\quad \det H(\hat{x})\ =\ \det
\tilde{g}_{\infty}|_{\{\hat{x}\}\times \mathbb{R}^{m-n}}\ =\ \lim_{i\to
\infty}\det \tilde{g}_{i|}(\hat{x}).
\end{align}
Also notice that we have normalized so that $\det{H}(x_0)=1$. Now we have the
following proposition:
\begin{prop}\label{prop: central_density} 
Fix $x_0\in \tilde{\mathcal{R}}$. Let $U_{x_0}$ and $V_{x_0}$ be neighborhoods
that fit into the diagram (\ref{eqn: diagram_c}), and let $H$ be defined in \S
3.3.2. There is a constant $C_{5.3}=C_{5.3}(x_0)$ such that
$\hat{\chi}_C=C_{5.3}\sqrt{\det H}$ on $V_{x_0}$.
\end{prop}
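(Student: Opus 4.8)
The plan is to follow the argument of \cite[Lemma 2-5]{Fukaya89}: compute the volume of each torus fiber $\hat{c}_i^{-1}(\hat{x})$ explicitly inside the controlled local trivialization constructed in \S 3.2, recognize it as the covolume of a lattice that does \emph{not} depend on $\hat{x}$, normalize by $|C\hat{V}_{x_0,i}|$ via the co-area formula so the $i$-dependent covolume constant cancels, and then pass to the limit $i\to\infty$, where $\tilde{g}_{i|}$ converges to $\tilde{g}_{\infty}|$ and the Gram determinant tends to $\det H$.

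First I would reduce to the unaveraged metrics: by Proposition~\ref{prop: invariant_metric} (applied fiberwise, as noted right after (\ref{eqn: hat_chi_C})) one may replace $\hat{g}^{1}_{i|}$ by $\hat{g}_{i|}$ in the definition (\ref{eqn: hat_chi_C}) of $\hat{\chi}_C$ at the cost of a factor $1+\Psi(\delta_i)$, so it suffices to analyze $|\hat{c}_i^{-1}(\hat{x})|_{\hat{g}_{i|}}$. Transporting to $W_{x_0}$ via $\phi_{x_0,i}$ and restricting to $CW_{x_0}=V_{x_0}\times\mathbb{R}^{k_0}$, the fiber $\tilde{c}_i^{-1}(\hat{x})=\{\hat{x}\}\times\mathbb{R}^{k_0}$ is identified with the abelian center $C(N_i)$ carrying the left-invariant metric $\tilde{g}_{i|}(\hat{x})$; such a metric on $\mathbb{R}^{k_0}$ is translation-invariant, hence flat, so $\hat{c}_i^{-1}(\hat{x})=C(N_i)/C(L_i)$ is a flat torus and
\[
\bigl|\hat{c}_i^{-1}(\hat{x})\bigr|_{\hat{g}_{i|}}\ =\ \operatorname{covol}_{\tilde{g}_{i|}(\hat{x})}\bigl(C(L_i)\bigr)\ =\ |\det B_i|\,\sqrt{\det\tilde{g}_{i|}(\hat{x})},
\]
where $B_i$ is the matrix, in the standard coordinates of the $\mathbb{R}^{k_0}$-factor, of a fixed $\mathbb{Z}$-basis of the lattice $C(L_i)\cong\mathbb{Z}^{k_0}$, and $\det\tilde{g}_{i|}(\hat{x})$ is the Gram determinant of $\tilde{g}_{i|}$ in those coordinates. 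The point making $B_i$ independent of $\hat{x}$ is that, in the trivialization $\phi_{x_0,i}$, the cocompact lattice $L_L\supseteq C(L_i)$ acts by the \emph{same} translations on every fiber: this is exactly the requirement that $\psi_{\hat{x},i}$ commute with $L_L$ in (\ref{eqn: local_psi}).

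Next I would invoke the co-area formula for $\hat{c}_i\colon C\hat{V}_{x_0,i}\to V_{x_0}$: this is a regular collapsing fibration with $d_{GH}(C\hat{V}_{x_0,i},V_{x_0})\le\Psi(\delta_i)$ and uniformly bounded second fundamental forms (the estimate analogous to (\ref{eqn: II_form_T})), so exactly as in the proof of Proposition~\ref{prop: measure_collapsing} one gets $|C\hat{V}_{x_0,i}|_{\hat{g}_{i|}}=(1+\Psi(\delta_i))\int_{V_{x_0}}|\hat{c}_i^{-1}(\hat{y})|_{\hat{g}_{i|}}\,\dvol_{\hat{g}_X}(\hat{y})$. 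Dividing, the $i$-dependent factor $|\det B_i|$ cancels, leaving
\[
\hat{\chi}_C(\hat{x})\ =\ \lim_{i\to\infty}\frac{\sqrt{\det\tilde{g}_{i|}(\hat{x})}}{\int_{V_{x_0}}\sqrt{\det\tilde{g}_{i|}(\hat{y})}\,\dvol_{\hat{g}_X}(\hat{y})}.
\]
Now the $C^{\infty}_{loc}(W_{x_0})$ convergence $\tilde{g}_{i|}\to\tilde{g}_{\infty}|$ from Theorem~\ref{thm: canonical_nbhd} and \S 3.3, together with the identity $\det\tilde{g}_{i|}(\hat{x})\to\det H(\hat{x})$ recorded immediately before the statement, shows that $\sqrt{\det\tilde{g}_{i|}}\to\sqrt{\det H}$ uniformly on $V_{x_0}$ with uniformly positive, bounded densities; hence both numerator and denominator converge and $\hat{\chi}_C=C_{5.3}\sqrt{\det H}$ on $V_{x_0}$, where $C_{5.3}=C_{5.3}(x_0)=\bigl(\int_{V_{x_0}}\sqrt{\det H}\,\dvol_{\hat{g}_X}\bigr)^{-1}>0$ depends only on $x_0$ and $X$. (Any discrepancy between the determinant convention used here and the $|X_1\wedge\cdots\wedge X_{k_0}|^2$ normalization of \S 3.3.2 is a constant, absorbed into $C_{5.3}$.)

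I expect the main obstacle to be the second paragraph — making rigorous that the torus-fiber volume is genuinely a \emph{fixed} lattice covolume times a Gram determinant. This rests on two facts that have to be spelled out carefully: that a left-invariant metric on the abelian center $C(N_i)$ is flat, so the fiber is an honest flat torus whose volume is a covolume; and that the trivialization $\phi_{x_0,i}$ was set up in \S 3.2 precisely so that $C(L_i)\le L_L$ acts by identical translations on all fibers, which makes the lattice $\hat{x}$-independent. Once these are in hand, the co-area estimate and the passage to the limit are routine, being entirely parallel to the treatment of $\chi_X$ in \S 2.2 and Proposition~\ref{prop: measure_collapsing}.
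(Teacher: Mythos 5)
Your proof is correct in outline and takes a genuinely different, more direct route than the paper, which follows Fukaya's Lemma~2-5 closely: the paper forms $\varepsilon$-tubular neighborhoods $E_i(\hat{x},\varepsilon)$ of finite subsets of the lattice orbit, compares their volumes under the double limit $i\to\infty$ then $\varepsilon\to 0$, and relates the result to $\hat{\chi}_C$ through a second covering count involving $W_i(\varepsilon_0)$. You instead compute each central torus fiber $\hat{c}_i^{-1}(\hat{x})$ as a flat torus whose volume is the covolume of $C(L_i)$, factor this as $|\det B_i|\sqrt{\det\tilde{g}_{i|}(\hat{x})}$ with $B_i$ independent of $\hat{x}$ (this is exactly what the commutation condition $\psi_{\hat{x}}\gamma=\gamma\psi_{\hat{x}}$ in (\ref{eqn: local_psi}) buys), cancel the $i$-dependent $|\det B_i|$ against the denominator by co-area, and pass to the limit in one step using the smooth convergence on $W_{x_0}$. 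Your route is shorter, avoids the $\varepsilon\to 0$ limit altogether, and produces the explicit formula $C_{5.3}(x_0)^{-1}=\int_{V_{x_0}}\sqrt{\det H}\,\dvol_{\hat{g}_X}$, which one can check agrees with the paper's $v(\varepsilon_0)^{-1}|W_{\infty}(\varepsilon_0)|_{\tilde{g}_{\infty|}}$ after a co-area computation on $CW_{x_0}$.

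There is one inconsistency worth fixing. In the first paragraph you switch from $\hat{g}^{1}_{i|}$ to the unaveraged $\hat{g}_{i|}$, but the flat-torus covolume factorization in the second paragraph needs the fiber metric to be translation-invariant, which is a property of the averaged metric $\hat{g}^{1}_{i|}$ (Proposition~\ref{prop: invariant_metric}) and not of $\hat{g}_{i|}$. The clean fix is to run the entire computation with $\hat{g}^{1}_{i|}$, which is what the definition (\ref{eqn: hat_chi_C}) uses to begin with; then the restriction $\tilde{g}^{1}_{i|}(\hat{x})$ to each central fiber is genuinely constant (left-invariance of $\tilde{g}^1_i$ under $(N_i)_L$ restricted to the abelian center), the torus is honestly flat, and the covolume formula is exact. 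Since $\|\tilde{g}^1_{i|}-\tilde{g}_{i|}\|\le\Psi(\delta_i)$ by (\ref{eqn: invariant_metric}), the identity $\lim_i\det\tilde{g}^1_{i|}(\hat{x})=\lim_i\det\tilde{g}_{i|}(\hat{x})=\det H(\hat{x})$ still holds, so the conclusion is unchanged. Alternatively, keep $\hat{g}_{i|}$ but insert the $\Psi(\delta_i)$ error for flatness explicitly; this is more cumbersome and gains nothing.
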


\begin{proof}(\emph{Following \cite[Lemma 2-5]{Fukaya89}}.)  
For each $\gamma\in C\Gamma_i$, let $\|\gamma\|$ denote its norm
mentioned above. For $\gamma \in Z$, we let $\|\gamma\|$ denote its norm in
$Aff(Z)$, defined in a similar way. Then we consider the subsets
\begin{align*}
C\Gamma_i(\varepsilon_0)\ &:=\ \{\gamma\in C\Gamma_i:\ \|\gamma\|\le
\varepsilon_0\}\\
\text{and}\quad Z(\varepsilon_0)\ &:=\ \{\gamma\in Z:\ \|\gamma\|\le
\varepsilon_0\}.
\end{align*}
Clearly, by the discussion above, any $\gamma\in C\Gamma_i(\varepsilon_0)$ acts
on the $C(N_i)$ fibers by a left translation, and thus
$C\Gamma_i(\varepsilon_0)\subset CL_i$ as a finite subset. As a consequence of
(\ref{eqn: H_convergence}), $C\Gamma_i(\varepsilon_0)$ converges to
$Z(\varepsilon_0)$ in the pointed Gromov-Hausdorff topology (fixing the
identity element).

Now we see that the following subsets of $CW_{x_0}$ defined for any $\hat{x}\in
V_{x_0}$,
\begin{align*}
E_i(\hat{x},\varepsilon)\ :=\ \bigcup_{\gamma\in C\Gamma_i(\varepsilon_0)}
B_{\tilde{g}_{i|}}(\gamma (\hat{x},o),\varepsilon)\quad \text{and}\quad
E_{\infty}(\hat{x},\varepsilon)\ :=\ \bigcup_{\gamma\in
Z(\varepsilon_0)} B_{H}(\gamma (\hat{x},o),\varepsilon),
\end{align*}
 satisfy for any $\varepsilon <\varepsilon_0\slash 100$, that 
\begin{align}\label{eqn: limit_E}
\lim_{i\to \infty}\sup_{\hat{x}\in V_{x_0}} 
\left|\frac{|E_i(\hat{x},\varepsilon)|}{|E_{\infty}(\hat{x},\varepsilon)|}-1\right|\
=\ 0,
\end{align}
by the convergence of $C\Gamma_i(\varepsilon_0)$ and the convergence
of the underlying metrics $\tilde{g}_{i |}$ to $\tilde{g}_{\infty |}=H$ on
$CW_{x_0}$, see (\ref{eqn: H_convergence}); compare also
\cite[(2-10)]{Fukaya89}.

On the one hand, we can show that that for any $\hat{x}\in V_{x_0}$,
\begin{align}\label{eqn: E_and_det_H}
\lim_{\varepsilon\to 0} \frac{|E_{\infty}(\hat{x},\varepsilon)|}{\omega_n
\varepsilon^{n}}\ =\ v(\varepsilon_0)\sqrt{\det H}(\hat{x}),
\end{align}
where
$v(\varepsilon_0)=|Z(\varepsilon_0)|_{H}$. The above limit holds
because $E_{\infty}(\hat{x},\varepsilon)$ is nothing but an $\varepsilon$ tubular
neighborhood in $CW_{x_0}$ of $Z(\hat{x},\varepsilon_0):=\{\gamma (\hat{x},o):\
\gamma\in Z(\varepsilon_0)\}$ contained in $p^{-1}(\hat{x})$; by (\ref{eqn:
H_convergence}) and the constancy of $\sqrt{\det H}$ along the fiber
$p^{-1}(\hat{x})\subset W_{x_0}$, we have
\begin{align*}
|Z(\hat{x},\varepsilon_0)|\ =\ &\int_{Z(\varepsilon_0)}\sqrt{\det H}(\hat{x})\
\darea_{Z_{x_0}}\\
=\ &v(\varepsilon_0)\sqrt{\det H}(\hat{x}).
\end{align*} 
Notice that $v(\varepsilon_0)$ is a constant independent of $\hat{v}\in
V_{x_0}$. Also compare this with \cite[(2-11) and (2-12)]{Fukaya89}, and notice
that here it is our choice of the (sufficiently small) $\varepsilon_0$ that
enables us to explicitly relate the fiber-wise limit volume ratio
$|E_{\infty}(\hat{x},\varepsilon)| \epsilon^{-n}$ with $\sqrt{\det H}(\hat{x})$.

On the other hand, for any $\varepsilon<\frac{1}{10} \min\{\varepsilon_0,r_0\}$
fixed, and all sufficiently large $i$, 
 we could consider $\tilde{V}_{i}(\hat{x},\epsilon)\subset C\tilde{V}_{x_0,i}$,
 the fundamental domain of the universal covering of
 $\hat{c}_i^{-1}(B_{\hat{g}_X}(\hat{x}, \varepsilon))$ containing the base point
 $\tilde{s}_i(\hat{x})\in C\tilde{V}_{x_0,i}$. By abusing notations, we
 may regard $\tilde{V}_{i}(\hat{x}, \epsilon)$ as a neighborhood of
 $(\hat{x},o) \in CW_{x_0}$. The Hausdorff distance (measured within
 $(CW_{x_0},\tilde{g}_{i |})$ between $\cup_{\gamma\in C\Gamma_i(\varepsilon_0)}
\gamma\tilde{U}_{i}(\hat{x},\varepsilon)$ and $E_i(\hat{x},\varepsilon)$ is
bounded above by $\Psi(\delta_i)$, and therefore
 \begin{align}\label{eqn: limit_CGamma}
 \lim_{i\to\infty}|E_i(\hat{x},\varepsilon)|_{\tilde{g}_{i |}}\ =\
 \lim_{i\to\infty}|C\Gamma_i(\varepsilon_0)|
 |\tilde{V}_i(\hat{x},\varepsilon)|_{\tilde{g}_{i |}},
 \end{align}
 where $|C\Gamma_i(\varepsilon_0)|$ denotes the number of elements in
 $C\Gamma_i(\varepsilon_0)$, a number that tends to infinity as $i\to \infty$.
 
 But $|\tilde{V}_i(\hat{x},\varepsilon)|$ could also be computed by the co-area
 formula as following: 
 \begin{align}\label{eqn: U_integral}
 |\tilde{V}_i(\hat{x},\varepsilon)|_{\tilde{g}_{i |}}\ =\
 \int_{\tilde{c}_i(E_i(\hat{x},\varepsilon))}|\hat{c}_i^{-1}(y)|_{\hat{g}_{i|}}\ 
 \dvol_{\hat{g}_X}(y)+ |\tilde{V}_i(\hat{x},\varepsilon)|_{\tilde{g}_{i|}}
 \Psi(\delta_i),
 \end{align}
 since $\hat{c}_i:C\hat{V}_{x_0,i}\to V_{x_0}$ is a $\Psi(\delta_i)$ almost
 Riemannian submersion, see \cite[\S 4]{Fukaya87ld}. Notice that the Hausdorff
 distance (measured within $(V_{x_0},\hat{g}_X)$) between $\tilde{c}_i
 (E_i(\hat{x}, \varepsilon))$ and $B_{\hat{g}_X}(\hat{x}, \varepsilon)$ is
 bounded  above by $\Psi(\delta_i)$, then by (\ref{eqn: limit_E}), (\ref{eqn:
 E_and_det_H}), (\ref{eqn: limit_CGamma}) and (\ref{eqn: U_integral}) we have
 \begin{align}\label{eqn: det_H_integral}
  \sqrt{\det H}(\hat{x})\ =\ &\frac{1}{v(\varepsilon_0)}\lim_{\varepsilon\to
 0}\lim_{i\to  \infty}\frac{|C\Gamma_i(\varepsilon_0)|}{\omega_n\varepsilon^n}
 \int_{\tilde{c}_i(E_i(\hat{x},\varepsilon))}|\hat{c}_i^{-1}(z)|_{\hat{g}_{i|}}\
 \dvol_{\hat{g}_X}(z).
 \end{align}
 
 To relate this with $\hat{\chi}_C(\hat{x})$, let us recall the definition
 (\ref{eqn: hat_chi_C}), and that for all $i$, $CW_{x_0}$ is the universal
 covering of $C\hat{V}_{x_0,i}$, equipped with the covering metric. Moreover, we
 could consider the subsets $W_i(\varepsilon_0):=\{ \gamma
 (\hat{x},o):\ \gamma\in C\Gamma_i(\varepsilon_0),\ \hat{x}\in V_{x_0}\}$
 of $CW_{x_0}$. Clearly $\{W_i(\varepsilon_0)\}$ sub-converges in the pointed
 Gromov-Hausdorff sense to a fixed open neighborhood
 $W_{\infty}(\varepsilon_0)$ of the zero section $V_{x_0}\times \{o\} \subset
 CW_{x_0}$, and we have the limit lower bound 
 \begin{align*}
 \lim_{i\to \infty}|C\Gamma_i(\varepsilon_0)||C\hat{V}_{x_0,i}|_{\hat{g}_{i|}}\
 =\ \lim_{i\to \infty}|W_i(\varepsilon_0)|_{\tilde{g}_{i |}}\ =\
 |W_{\infty}(\varepsilon_0)|_{\tilde{g}_{\infty |}}\ >\ 0,
 \end{align*}
 By this lower bound, together with (\ref{eqn: hat_chi_C}), (\ref{eqn:
 det_H_integral}) and the fact that $d_{H}(\tilde{c}_i
 (E_i(\hat{x},\varepsilon)), B_{\hat{g}_X}(\hat{x},\varepsilon))\le
 \Psi(\delta_i)$, we could obtain for for any $\hat{x}\in V_{x_0}$ that
 \begin{align*}
 \sqrt{\det H}(\hat{x})\ =\ &\frac{1}{v(\varepsilon_0)}\lim_{\varepsilon \to
 0}\lim_{i\to \infty}
 \frac{|C\Gamma_i(\varepsilon_0)||C\hat{V}_{x_0,i}|_{\hat{g}_{i|}}}
 {\omega_n\varepsilon^n} \int_{\tilde{c}_i(E_i(\hat{x},\varepsilon))}
 \frac{|\hat{c}_i^{-1}(z)|_{\hat{g}_{i |}}}{|C\hat{V}_{x_0,i}|_{\hat{g}_{i
 |}}}\ \dvol_{g_X}(z)\\
 =\ 
 &\frac{|W_{\infty}(\varepsilon_0)|_{\tilde{g}_{\infty |}}}{v(\varepsilon_0)}
 \lim_{\varepsilon\to 0}\fint_{B_{\hat{g}_X}(x,\varepsilon)}\hat{\chi}_C\
 \dvol_{\hat{g}_X}\\
 =\ &C_{5.3}(x_0)^{-1}\ \hat{\chi}_C(\hat{x}),
 \end{align*}
 where
 $C_{5.3}(x_0):=v(\varepsilon_0)
 |W_{\infty}(\varepsilon_0)|_{\tilde{g}_{\infty |}}^{-1}$ is a constant solely
 depending on $x_0\in \tilde{\mathcal{R}}$.
\end{proof}

By (\ref{eqn: rep_hat_chi_C}), this proposition says that the limit central
density function is locally represented as the density function of the limit
central distribution on $W_{x_0}$. At this stage, Theorem \ref{thm:
limit_central_density} is a direct consequence of the equation (\ref{eqn:
rep_hat_chi_C}) and Proposition \ref{prop: central_density}.

\section{Limits of controlled immortal Ricci flows}
In this section we prove Theorems \ref{thm: main1} and \ref{thm: main2}. As
mentioned in the introduction, the new difficulty for the Ricci flow case
compared to the Ricci flat case in \cite{NaberTian} is that Ricci flatness
directly gives, via O'Neill's formula, a set of elliptic equations on the limit
nilpotent bundle over the regular part, while for Ricci flows one has to push
the time to infinity in order to obtain a static equation. The desired static
equation is a consequence of the asymptotic vanishing of the \emph{time
derivative} of the $\mathcal{F}$- and $\mathcal{W}_+$-functionals to be
discussed in the first sub-section, and it also relies on our previous
discussion on the collapsing and convergence of integrals. In the second
sub-section we show that only orbifold type singularities may occure on the
collapsing limit, via a maximum principle argument inspired by the work of
Naber and Tian~\cite[Page 127]{NaberTian}, and this will finish the proofs of
Theorems \ref{thm: main1} and \ref{thm: main2}.

\subsection{The gradient Ricci soliton metrics on the limit unwrapped
neighborhoods}
\addtocontents{toc}{\protect\setcounter{tocdepth}{1}}
We begin with considering the behavior of the $\mathcal{F}$-functional for an 
immortal Ricci flows $(M,g(t))$ with a uniform curvature bound, especially one
satisfying the assumption of Theorem~\ref{thm: main1}. We will always fix a
solution $u\in C^{\infty}(M\times [0,\infty))$ to (\ref{eqn: conjugate_heat}).

Recall that $\mathcal{F}(g(t),u(t))\nearrow 0$ as $t\to \infty$, by the uniform
curvature bound on $M\times [t,\infty)$ we must have the following
\begin{lemma}\label{lem: vanishing_F'}
Along the immortal Ricci flow $(M,g(t))$ with uniformly bounded curvature, and
for any solution $u$ to (\ref{eqn: conjugate_heat}) on $M\times [0,\infty)$,
we have
\begin{align*}
\lim_{t\to \infty}\mathcal{F}'(g(t),u(t))\ =\ 0.
\end{align*}
\end{lemma}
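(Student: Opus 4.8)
The plan is to show that $\mathcal{F}'(g(t),u(t))\to 0$ by exploiting the fact that $\mathcal{F}(g(t),u(t))$ is monotone, bounded, and has a uniformly bounded \emph{second} derivative along the flow, so that a bounded monotone function with bounded derivative-of-derivative must have its derivative tend to zero. Concretely, recall from the excerpt that $\mathcal{F}(g(t),u(t))$ is non-decreasing and confined to the interval $\left[-\tfrac{2}{mt},0\right]$ (see (\ref{eqn: F_lb})), hence $\lim_{t\to\infty}\mathcal{F}(g(t),u(t))=0$ and in particular $\mathcal{F}(g(t),u(t))$ is a Cauchy function of $t$: for every $\epsilon>0$ there is $t_\epsilon$ so that $\mathcal{F}(g(t+1),u(t+1))-\mathcal{F}(g(t),u(t))<\epsilon$ for all $t\ge t_\epsilon$. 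Since $\mathcal{F}'\ge 0$ (from the monotonicity formula $\mathcal{F}'=2\int_M|\Rc-\nabla^2\ln u|^2u\,\dvol\ge 0$), this says $\int_t^{t+1}\mathcal{F}'(g(s),u(s))\,\mathrm{d}s<\epsilon$.

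First I would upgrade this time-averaged smallness to pointwise smallness. The key is a uniform bound $|\mathcal{F}''(g(t),u(t))|\le C$ for all $t$ large, which upgrades ``small integral of a non-negative function'' to ``small values'': if $\mathcal{F}'(g(s_0),u(s_0))\ge \eta$ for some $s_0\in[t,t+1]$ and $|\mathcal{F}''|\le C$, then $\mathcal{F}'\ge \eta/2$ on an interval of length $\min\{\eta/(2C),1\}$ around $s_0$, forcing $\int_t^{t+1}\mathcal{F}'\ge \tfrac{\eta}{2}\min\{\eta/(2C),1\}$, contradicting the arbitrarily small time-average. Thus it remains to establish the uniform bound on $\mathcal{F}''$. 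Differentiating $\mathcal{F}'(g(t),u(t))=2\int_M|\Rc_{g(t)}-\nabla^2_{g(t)}\ln u(t)|^2_{g(t)}u(t)\,\dvol_{g(t)}$ along the Ricci flow coupled with (\ref{eqn: conjugate_heat}), one obtains an expression that is polynomial in the curvature tensor and its covariant derivatives, in $\nabla^2\ln u$ and its derivatives, and in $\nabla\ln u$, all integrated against the probability measure $u\,\dvol$. The uniform curvature bound (hence, by Shi's estimates, uniform bounds on all $|\nabla^l\Rm|$ for $t\ge 1$) controls the curvature factors; the uniform bounds on $\ln u$ and its spatial derivatives of all orders on $M\times[1,\infty)$ --- guaranteed by the construction of $u$ in \S 2.3 (via \cite[Proposition 5.1]{Foxy1809}, \cite[Theorem 3.3]{QiZhang06} and parabolic bootstrapping) together with the evolution equation $\partial_t\ln u=-\Delta\ln u-|\nabla\ln u|^2+\Sc$ which then controls time derivatives as well --- control the remaining factors. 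Since $u\,\dvol_{g(t)}$ is a probability measure, the integral is bounded by a constant depending only on these uniform bounds, giving $|\mathcal{F}''(g(t),u(t))|\le C$ for $t\ge 1$.

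The main obstacle I anticipate is the bookkeeping in the computation of $\mathcal{F}''$: one must carefully differentiate through the time-dependence of the metric (which brings in $\Rc$ terms from $\partial_t\,\dvol$ and from commuting $\partial_t$ with $\nabla^2$), of the measure density $u$ (governed by (\ref{eqn: conjugate_heat})), and of the integrand itself. A cleaner route that avoids the worst of this is to work with the pointwise quantity and use Perelman's identities: writing $v:=(\Sc+2\Delta\ln u-|\nabla\ln u|^2)u$ and using the known evolution equations, $\mathcal{F}$ and its derivatives admit expressions whose time-derivatives are controlled once $\Rm$ and $u$ (with all derivatives) are bounded. Either way the only inputs needed are (i) the uniform curvature bound (hypothesis), (ii) Shi's estimates, (iii) the uniform bounds on $u$ and its derivatives from \S 2.3, and (iv) the elementary calculus lemma that a $C^1$ non-negative function with bounded derivative and arbitrarily small one-unit time-averages tends to zero. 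Note the diameter bound is not actually needed for this lemma --- only the curvature bound --- which matches the fact that the statement is phrased for immortal flows with uniformly bounded curvature.
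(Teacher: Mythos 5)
Your proposal is correct and follows essentially the same route as the paper: both prove $\lim_{t\to\infty}\mathcal{F}(g(t),u(t))=0$ from (\ref{eqn: F_lb}), establish a uniform bound on $\mathcal{F}''$ via Shi's estimates together with parabolic estimates on $\ln u$ (the paper computes $\partial_t|\Rc-\nabla^2\ln u|^2$ explicitly and bounds the resulting integrand), and then conclude by the same elementary calculus argument that a bounded, monotone $\mathcal{F}$ with bounded second derivative must have $\mathcal{F}'\to 0$. Your observation that the diameter bound is not needed for this lemma is also accurate and consistent with how the paper states it.
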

\begin{proof}
We abbreviate $\mathcal{F}(t)=\mathcal{F}(g(t),u(t))$ and recall that
\begin{align*} 
\mathcal{F}'(t)\ =\ 2\int_M\left|\Rc_{g(t)}-\nabla^2_{g(t)} \ln u(t)\right|^2\
u(t)\ \dvol_{g(t)}.
\end{align*}
Moreover, we have the evolution equaitons (summing over repeating indicies)
\begin{align*}
\partial_t\Rc_{ij}\ =\ \Delta \Rc_{ij} - 2\Rm_{iklj}\Rc_{kl}-
2\Rc_{ik}\Rc_{jk}\quad  
\text{and}\quad \partial_t\Gamma_{ij}^k\
=\ -g^{kl}(\nabla_i\Rc_{jl}+\nabla_j\Rc_{il}-\nabla_l\Rc_{ij}),
\end{align*}
and therefore we could further compute
\begin{align*}
&\partial_t|\Rc-\nabla^2\ln u|^2\\
=\ &2 \Rc_{ij}(\Rc_{ik}-\nabla^2_{ik}\ln u)(\Rc_{jk}-\nabla^2_{jk}\ln u)
+2(\Delta\Rc_{ij}-2\Rm_{iklj}\Rc_{kl}-2\Rc_{ik}\Rc_{jk})(\Rc_{ij}-\nabla^2_{ij}\ln
u)\\
&-2\left(\nabla^2_{ij}(\ln u)'-(\nabla_j\Rc_{ik}+\nabla_i\Rc_{jk}
-\nabla_k\Rc_{ij})\nabla_k\ln u\right) (\Rc_{ij}-\nabla^2_{ij}\ln u).
\end{align*}

Since $|\Rm|(t)\le C$, by Shi's estimate~\cite{Shi}, there exists some
$\delta_0(C,m)>0$ and $C(l,m)>0$, such that for any $s\in [t,t+\delta_0]$ and
any $l\in \mathbb{N}$, $|\nabla^l \Rm|(s)\le \min\{C,C(l,m)(s-t)^{-l}\}$.
Moreover, by the usual parabolic estimate $|\nabla^2\ln u|t+|\nabla \ln
u|^2t\le C$, and thus we can estimate for any $s\in [t,t+\delta_0]$: 
\begin{align*}
\begin{split}
\mathcal{F}''(s)\ =\ &\int_M\partial_s\left|\Rc_{g(s)}-\nabla_{g(s)}^2\ln
u\right|^2 u(s)\ \dvol_{g(s)}-\int_M\left|\Rc_{g(s)}-\nabla_{g(s)}^2\ln
 u\right|^2\Sc_{g(s)}u(s)\ \dvol_{g(s)}\\
\le\ &C(C,m)\int_M\left((|\nabla^2\Rm|+|\Rm|^2+|\nabla\Rm||\nabla
\ln u|+|\nabla^2\ln u|)(|\Rm|+|\nabla^2 \ln u|)\right)u\
\dvol_{g}\\
\le\ &C(C,m,\delta_0).
\end{split}
\end{align*}

This local boundedness of $\mathcal{F}''(t)$ ensures that
$\lim_{t\to \infty}\mathcal{F}'(t)= 0$: Otherwise, we could find a sequence
$t_i\to \infty$ such that $\mathcal{F}'(t_i)\ge \varepsilon_0>0$; then by
the above bound of $\mathcal{F}''(t)$, we have $\mathcal{F}'(s)\ge
\frac{\varepsilon_0}{2}$ for all $s\in [t_i-\delta_1,t_i+\delta_1]$, where
$\delta_1=\frac{\varepsilon_0}{2}C(C,m,\delta_0)^{-1}$; and therefore
$\mathcal{F}(t_i+\delta_1)-\mathcal{F}(t_i-\delta_1)\ge \varepsilon_0\delta_1>0$
for any $i$, implying $\mathcal{F}(t)\to \infty$ as $t\to \infty$ by the
monotonicity of $\mathcal{F}(t)$. This contradicts the fact that
$\lim_{t\to\infty}\mathcal{F}(t)=0$.
\end{proof}

Now suppose that an unbounded sequence of time slices along an immortal Ricci
flow satisfying the assumption of Theorem~\ref{thm: main1} collapses, we can
show the existence of a gradient steady Ricci soliton metric on the limit 
unwrapped neighborhoods around orbifold points:
\begin{prop}\label{prop: steady_soliton_metric}
Let $(M,g(t))$ be an immortal Ricci flow satisfying the assumption of
Theorem~\ref{thm: main1}. For any $t_i\to \infty$, suppose $(M,g(t_i))$
converges to a lower (Hausdorff) dimensional metric space $(X,d)$, and 
$\forall x_0\in \tilde{\mathcal{R}}$, let $W_{x_0}$ be the limit unwrapped
neighborhood of $x_0$, as constructed in Theorem~\ref{thm: canonical_nbhd},
together with $\tilde{g}_i$, the natural covering metrics on $W_{x_0}$ induced
by $g(t_i)$. Then the limit metric $\tilde{g}_{\infty}$ on $W_{x_0}$ satisfies the
gradient steady Ricci soliton equation.
\end{prop}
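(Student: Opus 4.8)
The plan is to combine Lemma~\ref{lem: vanishing_F'} with the integral convergence results of \S 4 to pass the vanishing of $\mathcal{F}'(g(t_i),u(t_i))$ to the limit unwrapped neighborhood, and then read off the steady soliton equation fiber-by-fiber via O'Neill-type computations. First I would fix a solution $u\in C^\infty(M\times[0,\infty))$ to the conjugate heat equation (\ref{eqn: conjugate_heat}), as constructed in \S 2.3, with $\int_M u(t)\,\dvol_{g(t)}=1$ for all $t\ge 0$. Writing $\rho_i:=u(t_i)$, the uniform curvature-diameter bound together with the parabolic estimates of \cite[Proposition 5.1]{Foxy1809} and \cite[Theorem 3.3]{QiZhang06} give exactly the hypotheses $\sup_{M}|\ln(\rho_i|M|_{g(t_i)})|\le\ln C$ and $\sup_M|\nabla\rho_i||M|_{g(t_i)}\le C$ needed to invoke Proposition~\ref{prop: measure_collapsing} and Theorem~\ref{thm: integral}; one also checks $w_i:=|{\Rc_{g(t_i)}}-\nabla^2_{g(t_i)}\ln u(t_i)|^2_{g(t_i)}$ satisfies a uniform $C^k$ bound, again by Shi's estimates and the parabolic bounds on $\ln u(t_i)$. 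By Lemma~\ref{lem: vanishing_F'},
\begin{align*}
\lim_{i\to\infty}\int_M w_i\,\rho_i\,\dvol_{g(t_i)}\ =\ \tfrac12\lim_{i\to\infty}\mathcal{F}'(g(t_i),u(t_i))\ =\ 0,
\end{align*}
so by Theorem~\ref{thm: integral} the limit $\int_X w_X\,\rho_X\,\dmu_X=0$, and since $w_X\ge0$ and $\rho_X\ge C^{-1}>0$ we get $w_X\equiv0$ on $X$. Combined with the local representation in Theorem~\ref{thm: integral}, the pulled-back functions $\tilde w_i$ converge in $C^{k-1,\alpha}_{loc}(W_{x_0})$ to $\tilde w_\infty=p^*q_{x_0}^*(w_X|_{U_{x_0}})=0$.

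Next I would identify $\tilde w_\infty$ with the tensor built from the limit metric. Because $\{\tilde g_i\}\to\tilde g_\infty$ and $\{\tilde\nabla_i\}\to\tilde\nabla_\infty$ in $C^\infty_{loc}(W_{x_0})$ (Theorem~\ref{thm: canonical_nbhd}), and because $\tilde w_i$ is literally the pull-back of the pointwise norm-squared of $\Rc_{g(t_i)}-\nabla^2_{g(t_i)}\ln u(t_i)$, I would show that $\tilde w_i$ converges to $|\Rc_{\tilde g_\infty}-\nabla^2_{\tilde g_\infty}\tilde u_\infty|^2_{\tilde g_\infty}$, where $\tilde u_\infty=\lim_i(-\ln\tilde\rho_i)+\text{const}$ is obtained by another application of Theorem~\ref{thm: integral} (or rather Proposition~\ref{prop: local_convergence}) to $w_i':=-\ln(\rho_i|M|_{g(t_i)})$, which is uniformly $C^k$ bounded by the same estimates; note $\tilde u_\infty$ is constant along the $p$ fibers. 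The subtle point here is the $C^{k-1,1}$ (rather than merely $C^0$) convergence in Theorem~\ref{thm: integral}, which is precisely what lets one commute the limit with the two derivatives appearing in $\nabla^2\ln u$ and with the curvature operator; this is why the statement of Theorem~\ref{thm: integral} was set up with a $C^k$ hypothesis on $w_i$. Once this identification is in place, $w_X\equiv0$ forces
\begin{align*}
\Rc_{\tilde g_\infty}\ =\ \nabla^2_{\tilde g_\infty}\tilde u_\infty\qquad\text{on }W_{x_0},
\end{align*}
which is exactly the gradient steady Ricci soliton equation.

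The main obstacle I anticipate is justifying the two interchanges in the previous paragraph rigorously: (i) that the limit of the pulled-back densities $\tilde\rho_i=\tilde q_i^*\hat q_i^*(\rho_i|_{f_i^{-1}(U_{x_0})})$ has enough regularity that $\nabla^2_{\tilde g_i}\ln\tilde\rho_i\to\nabla^2_{\tilde g_\infty}\tilde u_\infty$, and (ii) that $\Rc_{\tilde g_i}\to\Rc_{\tilde g_\infty}$ with the full $C^{k-1}$ strength needed for the pointwise identification of $\tilde w_\infty$. For (i) I would apply the local part of Theorem~\ref{thm: integral} to $w_i':=-\ln(\rho_i|M|_{g(t_i)})$ after first upgrading the $C^1$ bound on $\rho_i$ to a $C^k$ bound: this uses that $u(t_i)$ solves a uniformly parabolic equation with uniformly bounded (in all derivatives) coefficients on $M\times[t_i-1,t_i]$, so interior parabolic Schauder estimates give $\|u(t_i)\|_{C^k(M,g(t_i))}|M|_{g(t_i)}\le C(k)$; hence $w_i'$ is admissible in Theorem~\ref{thm: integral}. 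For (ii) I would simply note that $\Rc_{\tilde g_i}$ is a universal smooth expression in $\tilde g_i$ and its derivatives up to second order, so $C^\infty_{loc}$ convergence of $\tilde g_i$ (Theorem~\ref{thm: canonical_nbhd}) already gives $C^\infty_{loc}$ convergence of $\Rc_{\tilde g_i}$. Putting these together, $\tilde w_i\to|\Rc_{\tilde g_\infty}-\nabla^2_{\tilde g_\infty}\tilde u_\infty|^2_{\tilde g_\infty}$ locally uniformly, and since this limit must also equal $\tilde w_\infty\equiv 0$, the proposition follows. Finally I would remark that the potential $\tilde u_\infty$ is $G_{x_0}$-invariant (as a limit of $G_{x_0}$-invariant pull-backs) and constant along the $p$ fibers, which is the form in which it will be used in \S 6.2 to derive the elliptic equation (\ref{eqn: fiber_volume_heat}).
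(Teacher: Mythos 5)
Your proposal follows essentially the same line as the paper's proof: fix a conjugate-heat solution $u$, set $\rho_i=u(t_i)$ and $w_i=|\Rc_{g(t_i)}-\nabla^2_{g(t_i)}\ln u(t_i)|^2$, invoke Lemma~\ref{lem: vanishing_F'} to get $\int_M w_i\,\rho_i\,\dvol_{g(t_i)}\to 0$, pass this through Theorem~\ref{thm: integral} to conclude $w_X\equiv 0$, then use the $C^{k-1,\alpha}_{loc}$ convergence of the pull-backs (Proposition~\ref{prop: local_convergence}) together with the $C^\infty_{loc}$ convergence of $\tilde g_i$ to identify $\tilde w_\infty$ pointwise with the norm-squared of the soliton tensor of $\tilde g_\infty$. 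The paper obtains the limit potential by noting directly that $\tilde u(t_i)$ converges in $C^\infty$ on $W_{x_0}$; your device of applying Proposition~\ref{prop: local_convergence} to $w_i':=-\ln(\rho_i|M|_{g(t_i)})$ is a reasonable and slightly more systematic way to make that convergence precise, and your remark about upgrading the $C^1$ bounds of $\rho_i$ to $C^k$ via interior parabolic Schauder estimates is exactly the point the paper addresses when it says that $u(t_i)$ has ``uniform regularity control given by the conjugate heat equation.''

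One small glitch: with your normalization $\tilde u_\infty=\lim_i(-\ln\tilde\rho_i)+\text{const}$, the pull-back integrand satisfies $\tilde w_i\to|\Rc_{\tilde g_\infty}-\nabla^2_{\tilde g_\infty}\ln u_\infty|^2=|\Rc_{\tilde g_\infty}+\nabla^2_{\tilde g_\infty}\tilde u_\infty|^2$, not $|\Rc_{\tilde g_\infty}-\nabla^2_{\tilde g_\infty}\tilde u_\infty|^2$, so the resulting equation should read $\Rc_{\tilde g_\infty}+\nabla^2_{\tilde g_\infty}\tilde u_\infty=0$ (equivalently $\Rc_{\tilde g_\infty}=\nabla^2_{\tilde g_\infty}\ln u_\infty$, as in the paper). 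This is only a sign-convention slip in the choice of potential and does not affect the conclusion that $\tilde g_\infty$ is a gradient steady soliton.
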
 
\begin{proof} 
For any unbounded time sequence $\{t_i\}$, let $u\in C^{\infty}(M\times
[0,\infty))$ solving (\ref{eqn: conjugate_heat}) be constructed as in \S 2.3.
By Lemma~\ref{lem: vanishing_F'}, we know that
\begin{align*}
\lim_{i\to \infty}\mathcal{F}'(g(t_i),u(t_i))\ =\ 0,
\end{align*}
and for the collapsing sequence
$\{(M,g(t_i))\}$, we put $\rho_i:=u(t_i)$ and $w_i:=2\left|\Rc_{g(t_i)}-
\nabla_{g(t_i)}^2\ln u(t_i)\right|^2$. To obtain a limit function, we need to
uniformly control $\|\rho_i\|_{C^1(M)}|M|_{g(t_i)}$.

By the uniform curvature and diameter bounds along the Ricci flow, we could
apply \cite[Propositions 5.1 and 5.3]{Foxy1809} to the space-time $M\times
(t_i-1,t_i+1]$ and obtain uniformly positive bounds of $|M|_{g(t_i)}\rho_i$,
and \cite[Proposition 5.1]{Foxy1809} together with \cite[Theorem
3.3]{QiZhang06} further guarantee the uniform upper bound of
$|M|_{g(t_i)}|\nabla \rho_i|$. Consequently, Proposition \ref{prop:
collapsing_integral} applies and we get a continuous limit function $w_X:X\to
[0,\infty)$ such that
\begin{align*}
\int_Xw_X\ \rho_X\ \dmu_X\ =\ &\lim_{i\to \infty}\int_M w_i\ \rho_i\
\dvol_{g(t_i)}\\
=\ &\lim_{t_i\to \infty}\mathcal{F}'(g(t_i),u(t_i))\\
=\ &0.
\end{align*}
Therefore $w_X\equiv 0$ on $X$. 

Now for any $x_0\in \tilde{\mathcal{R}} \subset X$ fixed, we let $W_{x_0}$ be
the limit unwrapped neighbrohood of $x_0$, as constructed in
Theorem~\ref{thm: canonical_nbhd}. Let $\tilde{g}_i$ on $W_{x_0}$ be the
covering metrics of $g_i$ and let $\tilde{g}_{\infty}$ be the smooth limit of
$\tilde{g}_i$ on $W_{x_0}$, whose existence is guaranteed by the uniform
regularity and injectivity radius lower bound of $\{\tilde{g}_i\}$. By
Proposition~\ref{prop: local_convergence}, for $\tilde{w}_i$, the pull-back of
$w_i$ to $W_{x_0}$, converges in the $C^{\infty}$ topology to some
$w_{\infty}$, since the unifrom regularity of the metrics $\tilde{g}(t_i)$
guaratees the uniform regularity of their curvature, and $u(t_i)$ has uniform
regularity control given by the conjugate heat equation (and the curvature
bound). Notice that this limit function $w_{\infty}$ is constant along the limit
$N$ orbits and has its values agree with $w_X$ on each fiber. Consequently,
$w_{\infty}\equiv 0$ on $W_{x_0}$. Similarly, we also have $\tilde{u}(t_i)$, the
pull-back of $u(t_i)$ to $W_{x_0}$ via the covering map, converges
to some $u_{\infty}$ in the $C^{\infty}$ topology on $W_{x_0}$.

On the other hand, by the smooth convergence of $\tilde{g}(t_i)$ on $W_{x_0}$,
we have
\begin{align}\label{eqn: convergence_tilde_w_i}
\begin{split}
\lim_{i\to \infty}\tilde{w}_i\ =\
&2\lim_{i\to\infty}\left|\Rc_{\tilde{g}(t_i)}-\nabla_{\tilde{g}(t_i)}^2\ln
\tilde{u}(t_i)\right|^2\\
=\ &2\left|\Rc_{\tilde{g}_{\infty}}-\nabla_{\tilde{g}_{\infty}}^2\ln
u_{\infty}\right|^2.
\end{split}
\end{align}
This implies that on $W_{x_0}$, the limit Riemannian metric $\tilde{g}_{\infty}$
satisfies the gradient steady soliton equation:
\begin{align}\label{eqn: setady_soliton_metric}
\Rc_{\tilde{g}_{\infty}}-\nabla_{\tilde{g}_{\infty}}^2\ln u_{\infty}\ \equiv\ 0.
\end{align}
Especially, the Ricci flow on $W_{x_0}$ becomes the one generated by 
$\mathcal{L}_{-\nabla \ln u_{\infty}}$.
\end{proof}

Similar to the discussion above, we expect to show, for a Ricci flow
satisfying the assumptions of Theorem~\ref{thm: main2}, that any rescaled
sequence $\{(M,t_i^{-1}g(t_i))\}$ with $t_i\to\infty$ will sub-converge to produce
certain limit expanding gradient Ricci soliton metric. Here we recall that if
the diameter growth is of order $t^{\frac{1}{2}}$, and the global volume ratio
$|M|_{g(t)}\diam (M,g(t))^{-m}$ fails to have a uniformly positive lower bound
along the Ricci flow, then the $\mathcal{W}_+$-functional will be unbounded
(see \cite{FIN05}), and thus the argument leading to the asymptotic vanishing
of $\mathcal{F}'$ in proving Lemma~\ref{lem: vanishing_F'} will not work for
$\mathcal{W}_+'(t)$. An even more serious issue is the unfavorable rescaling
effect: contrary to the constancy of the $\mathcal{F}$- or
$\mathcal{W}$-functionals leading to corresponding gradifent Ricci soliton
equations by the vanishing of their derivatives, the asymptotic vanishing of
$\mathcal{W}'_+$ cannot guarantee the existence of a gradient expanding Ricci
soliton equation when taking rescaled limits. Therefore a condition concerning
$t\boldsymbol{\mu}'(t)$ in (\ref{eqn: main2}) is necessary. We now prove the
following
\begin{prop}\label{prop: expanding_soliton_metric}
Let $(M,g(t))$ be an immortal Ricci flow satisfying the assumptions of
Theorem~\ref{thm: main2} such that
$\limsup_{t\to\infty}t\boldsymbol{\mu}_+'(t)=0$, and let $\{t_i\}$ be an
unbounded seqeuence.
Assume that the sequence $\{(M, t_i^{-1} g(t_i))\}$ collapses to a lower (Hausdorff)
dimensional metric space $(X,d)$. For any $x_0\in \tilde{\mathcal{R}}$, let
$W_{x_0}$ be the limit unwrapped neighborhood of $x_0$, as constructed in
Theorem~\ref{thm: canonical_nbhd}, together with $\tilde{g}_i$, the natural
covering metric on $W_{x_0}$ induced by $t_i^{-1}g(t_i)$. Then the limit metric
$\tilde{g}_{\infty}$ on $W_{x_0}$ satisfies the gradient expanding Ricci
soliton equation.
\end{prop}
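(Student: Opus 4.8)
The plan is to run the argument of Proposition~\ref{prop: steady_soliton_metric}, with the scale-invariant $\mathcal{W}_+$-functional in place of $\mathcal{F}$ and the $\boldsymbol{\mu}_+$-minimizers in place of the conjugate heat solution. First, since $\boldsymbol{\mu}_+$ is non-decreasing we have $\boldsymbol{\mu}_+'(t)\ge 0$, so $\liminf_{t\to\infty}t\boldsymbol{\mu}_+'(t)\ge 0$; combined with the hypothesis $\limsup_{t\to\infty}t\boldsymbol{\mu}_+'(t)=0$ this gives $\lim_{t\to\infty}t\boldsymbol{\mu}_+'(t)=0$. Now set $\bar g_i:=t_i^{-1}g(t_i)$ and $\bar u_i:=t_i^{m/2}u_{t_i}$, where $u_{t_i}\in C^{\infty}(M)$ is the minimizer of $\mathcal{W}_+(g(t_i),\cdot,t_i)$ from \cite[Theorem 1.7]{FIN05}. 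By the type-III hypothesis $\sup_M|\Rm_{g(t)}|_{g(t)}\le Ct^{-1}$ and $\diam(M,g(t))=O(t^{1/2})$, the metrics $\bar g_i$ have uniformly bounded curvature and diameter, hence satisfy (\ref{eqn: regularity}) by Shi's estimates \cite{Shi}, and $\int_M\bar u_i\ \dvol_{\bar g_i}=1$. The scaling identity $\mathcal{W}_+(cg,c^{-m/2}u,ct)=\mathcal{W}_+(g,u,t)$ shows that $\bar u_i$ minimizes $\mathcal{W}_+(\bar g_i,\cdot,1)$ among probability densities, and a direct computation from (\ref{eqn: mu+'}) --- using that $\Rc$ and the Levi--Civita connection (hence $\nabla^2\ln(\cdot)$) are scale invariant, that $g(t_i)/(2t_i)=\bar g_i/2$, that $|S|^2_{t_i\bar g_i}=t_i^{-2}|S|^2_{\bar g_i}$ for any $(0,2)$-tensor $S$, and that $u_{t_i}\dvol_{g(t_i)}=\bar u_i\dvol_{\bar g_i}$ --- yields
\[
t_i\boldsymbol{\mu}_+'(t_i)\ =\ 2\int_M\left|\Rc_{\bar g_i}-\nabla^2_{\bar g_i}\ln\bar u_i+\tfrac12\bar g_i\right|^2_{\bar g_i}\ \bar u_i\ \dvol_{\bar g_i}.
\]

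Next I would obtain uniform estimates on the rescaled minimizers so that Propositions~\ref{prop: collapsing_integral} and~\ref{prop: local_convergence} apply with $\rho_i=\bar u_i$ and $w_i=2|\Rc_{\bar g_i}-\nabla^2_{\bar g_i}\ln\bar u_i+\tfrac12\bar g_i|^2_{\bar g_i}$. Writing $\bar u_i=(4\pi)^{-m/2}e^{-\bar f_i}$, the function $\bar f_i$ satisfies the Euler--Lagrange equation of $\mathcal{W}_+(\bar g_i,\cdot,1)$; using this equation and the uniform curvature bound on $(M,\bar g_i)$ one deduces (i) a uniform log-gradient estimate $\sup_M|\nabla\ln\bar u_i|_{\bar g_i}\le C$, hence $\sup_M|\nabla\bar u_i|_{\bar g_i}|M|_{\bar g_i}\le C$ after noting that $\bar u_i|M|_{\bar g_i}$ has average $1$; (ii) uniform two-sided bounds $\sup_M|\ln(\bar u_i|M|_{\bar g_i})|\le\ln C$ by a Harnack/Moser-iteration argument for the same equation; and (iii) uniform $C^k$ bounds on $\bar u_i$, hence on $w_i$, for every $k$, by elliptic bootstrapping off the smooth bounds on $\bar g_i$. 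Granting (i)--(iii), Proposition~\ref{prop: collapsing_integral} gives a continuous $w_X\ge 0$ on $X$ with $\int_X w_X\rho_X\ \dmu_X=\lim_{i\to\infty}\int_M w_i\bar u_i\ \dvol_{\bar g_i}=\lim_{i\to\infty}t_i\boldsymbol{\mu}_+'(t_i)=0$, so $w_X\equiv 0$ on $X$.

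Finally, for any $x_0\in\tilde{\mathcal{R}}$ I would pass to the limit on the unwrapped neighborhood $W_{x_0}$ exactly as in Proposition~\ref{prop: steady_soliton_metric}: by Proposition~\ref{prop: local_convergence} the pull-backs $\tilde w_i$ converge in $C^{k-1,\alpha}_{loc}(W_{x_0})$ to a limit that is constant along the $p$ fibres and equals $p^{\ast}q_{x_0}^{\ast}w_X=0$; by (iii) the pull-backs $\tilde u_i$ of $\bar u_i$ converge in $C^{\infty}_{loc}(W_{x_0})$ to some $u_\infty$, and the covering metrics $\tilde g_i$ converge to $\tilde g_\infty$. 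Passing to the limit in $\tilde w_i=2|\Rc_{\tilde g_i}-\nabla^2_{\tilde g_i}\ln\tilde u_i+\tfrac12\tilde g_i|^2_{\tilde g_i}$ gives $\Rc_{\tilde g_\infty}-\nabla^2_{\tilde g_\infty}\ln u_\infty+\tfrac12\tilde g_\infty\equiv 0$ on $W_{x_0}$, the gradient expanding Ricci soliton equation with potential $-\ln u_\infty$. The main obstacle is the second step: in contrast to Proposition~\ref{prop: steady_soliton_metric}, where the pointwise and gradient bounds for the conjugate heat solution are supplied by \cite{Foxy1809,QiZhang06}, here one must work with the Euler--Lagrange equation of the $\mathcal{W}_+$-minimizer and, crucially, verify that the possibly unbounded values $\boldsymbol{\mu}_+(t_i)$ do not spoil the estimates --- which they do not, once $\bar u_i$ is normalized to be a probability density for $\bar g_i$, so that the quantity to control is $\bar u_i|M|_{\bar g_i}$ rather than $\bar u_i$ itself.
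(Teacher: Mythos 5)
Your plan is essentially the paper's proof: rescale the metric and the $\boldsymbol{\mu}_+$-minimizer, recognize $t_i\boldsymbol{\mu}_+'(t_i)$ as the weighted $L^2$-norm of the expanding-soliton defect in the rescaled variables, establish uniform bounds on the minimizers via the Euler--Lagrange equation, feed these into Propositions~\ref{prop: collapsing_integral} and~\ref{prop: local_convergence}, and pass to the limit on $W_{x_0}$. The one place you leave a genuine black box is the two-sided bound on $\bar u_i|M|_{\bar g_i}$: the paper obtains the upper bound not by Moser iteration but by a short maximum-principle computation --- plugging the constant $v=|M|_{g_i}^{-1/2}$ into the $\mathcal{W}_+$-functional to get $\boldsymbol{\mu}_+(t_i)\le -\ln|M|_{g_i}+m(m+\ln 2\sqrt{\pi})$, then evaluating the Euler--Lagrange equation (\ref{eqn: EulerLagrange}) at the interior maximum of $v_i:=\sqrt{\bar u_i}$ where $\Delta v_i\le 0$ --- and the lower bound then follows from the Cheng--Yau gradient estimate and the resulting Harnack inequality together with $\int v_i^2\,\dvol_{g_i}=1$; your ordering (gradient bound first, two-sided bounds second) is also workable once spelled out, since the gradient bound and unit total mass already imply both sides of the estimate, but as written the derivation of $\sup_M|\nabla\bar u_i|\,|M|_{\bar g_i}\le C$ from ``average $1$'' is circular and needs the pointwise upper bound you defer to step (ii).
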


\begin{proof}
For any sequence $t_i\to \infty$, let $u_i$ be the minimizer (whose existence
guaranteed by \cite[Theorem 17. (a)]{FIN05}) of the
$\boldsymbol{\mu}_+$-functional, i.e.
$\mathcal{W}_+(g(t_i),u_i,t_i)=\boldsymbol{\mu}_+(t_i)$. By (\ref{eqn: mu+'}) we
also know that
\begin{align*}
\lim_{i\to\infty}t_i\ \mathcal{W}_+'(g(t_i),u_i,t_i)\ =\ 0,
\end{align*} 
and by the expression
\begin{align}\label{eqn: t^2W'}
t_i\ \mathcal{W}_+'(g(t_i),u_i)\ =\
\int_M2t_i^2\left|\Rc_{g(t_i)}-\nabla_{g(t_i)}^2\ln
u_i+\frac{g(t_i)}{2t_i}\right|^2\ u(t_i)\ \dvol_{g(t_i)},
\end{align}
we know that after the rescaling $g(t_i)\mapsto g_i:=t_i^{-1}g(t_i)$ and 
$t_i\mapsto 1$, and with the notations $\rho_i:=t_i^{\frac{m}{2}}u_i$ and
$w_i:=\left|\Rc_{g_i}-\nabla_{g_i}^2\ln \rho_i+\frac{1}{2}g_i\right|^2_{g_i}$,
we have
\begin{align*}
\lim_{i\to \infty} \int_Mw_i\ \rho_i\ \dvol_{g_i}\ =\ 0.
\end{align*}

In order to apply Theorem \ref{thm: integral} and obtain gradient expanding
Ricci soliton metric on a limit unwrapped neighborhood, we still need to check
the regularity of the minimizers of the $\boldsymbol{\mu}_+$-functional.
Since each $u_{i}$ minimizes $\boldsymbol{\mu}_+(t_i)$, by the scaling
invariance property of the $\mathcal{W}_+$-functional, we also have
$\boldsymbol{\mu}_+(t_i)=\mathcal{W}_+(g_i,\rho_i,1)$. Therefore, the
functional
\begin{align*}
 W^{1,2}(M)\ni v\quad \mapsto\quad \int_{M}\left(2|\nabla
v|^2+\Sc_{g_i}v^2+2v^2\ln v+m(1+\ln 2\sqrt{\pi})v^2\right)\ \dvol_{g_i}
\end{align*}
achieve its minimum value $\boldsymbol{\mu}_+(t_i)$ at the function
$\sqrt{\rho_i}$, subject to the conditions $\int_{M}v^2\ \dvol_{g_i}=1$ and
$v\ge 0$. Let $v_i:=\sqrt{\rho_i}$, then
$\mathcal{W}_i(v_i)=\boldsymbol{\mu}_+(t_i)$, and by \cite{Rothaus}, $v_i$
satisfies the following Euler-Lagrange equation:
\begin{align}\label{eqn: EulerLagrange}
2\Delta_{g_i}v_i\ =\ \left(2\ln v_i+\Sc_{g_i}+m(1+\ln
2\sqrt{\pi})-\boldsymbol{\mu}_+(t_i)\right)v_i.
\end{align}
On the other hand, we could bound $\boldsymbol{\mu}_+(t_i)$ from above by
plugging in $v=|M|_{g_i}^{-\frac{1}{2}}$ and applying the curvature bound of
$g_i$:
\begin{align*}
\boldsymbol{\mu}_+(t_i)\ \le\ &\fint_M \Sc_{g_i}-\ln |M|_{g_i}\
\dvol_{g_i}+m(1+\ln 2\sqrt{\pi})\\
\le\ &-\ln |M|_{g_i}+m(m+\ln 2\sqrt{\pi}).
\end{align*}
Consequently, by (\ref{eqn: EulerLagrange}) we can then check at the maximum of
$v_i$ (existence guaranteed by the continuity of $v_i$ and compactness of $M$)
that
\begin{align*}
2\ln v_i+\Sc_{g_i}+m(1+\ln 2\sqrt{\pi})\ \le\ m\ln 4\pi e-\ln |M|_{g_i}.
\end{align*}
And it follows that $v_i$ is bounded from above as 
\begin{align}\label{eqn: vi2_max} 
\max_M v_i^2\ \le\  C(m)|M|_{g_i}^{-1}.
\end{align}
On the other hand, by the Cheng-Yau gradient estimate \cite{ChengYau} and the
uniform curvature bound, we have some $C_{CY}(m)>0$ such that
\begin{align}\label{eqn: vi2'_max}
\max_M|\nabla \ln v_i|\ \le\ C_{CY}(m),
\end{align}
and the resulting Harnack inequality $\min_M v_i^2\ \ge\ C(m)\max_M v_i^2$
holds, in view of the uniform diameter upper bound of $(M,g_i)$, which is
ensured by the assumption on the diameter growth (of order $t^{\frac{1}{2}}$).
Consequently, the unit total mass of $v_i^2$ implies that
\begin{align}\label{eqn: vi2_min}
\min_M v_i^2\ \ge\ C(m)|M|_{g_i}^{-1}.
\end{align}
Now the estimates (\ref{eqn: vi2_max}), (\ref{eqn: vi2_min}) and (\ref{eqn:
vi2'_max}) together enable us to apply Theorem \ref{thm: integral} to
$\rho_i=v_i^2$ and $w_i$, whose $C^k$ regularity guaranteed by the uniform
regularity control of the metric $g_i$, as well as bootstrapping the elliptic
equation (\ref{eqn: EulerLagrange}) whose coefficients are initially controlled
by (\ref{eqn: vi2_max}) and (\ref{eqn: vi2_min}). Following exactly the same
argument as the proof of Proposition~\ref{prop: steady_soliton_metric} we get
to the desired conclusion, with a limit metric $\tilde{g}_{\infty}$ and
potential function $u_{\infty}$ on $W_{x_0}$.
\end{proof}


\begin{remark}\label{rmk: necessity}
One may wonder if we could pull the integrands of $\mathcal{F}'(g(t_i),u(t_i)$
or $t_i\mathcal{W}_+'(g(t_i),u_i)$ back to the frame bundle $FM_i$, consider the
convergence of integrals directly as in Proposition \ref{prop:
collapsing_integral}, and then apply the known unwrapping results as
\cite[Theorem 2.1]{NaberTian4d} or \cite[Theorem 1.1]{NaberTian} to obtain a
limit quantity on a limit unwrapped neighborhood and then take $O(m)$ quotient.
This would work, but we notice that on the frame bundles the pull-back
integrands are not the corresponding geometric quantities of the pull-back metrics, and the
vanishing limit integrand does not provide the desired geometric information
directly as does (\ref{eqn: convergence_tilde_w_i}). Decoding the information of
the vanishing limit integrand on the unwrapped frame bundle amounts to the same
work done in proving Theorem \ref{thm: canonical_nbhd}.
\end{remark}

\subsection{Proof of the main results} In this sub-section we prove the two main
theorems: we show that the possible collapsing limit $(X,d)$ of the
sequence of Ricci flow time slices given as in Theorem \ref{thm: main1} could
only develop orbifold type singularities, and to show Theorem \ref{thm:
main2}, we will assume a type-III Ricci flow to satisfy, in addition to
the $t^{\frac{1}{2}}$ diameter growth condition, also 
\begin{align}\label{eqn: contrapositive}
\limsup_{t\to \infty}t\boldsymbol{\mu}_+'(t)\ =\ 0\quad \text{and}\quad
\liminf_{t\to \infty}|M|_{g(t)}\diam (M,g(t))^{-m}\ =\ 0,
\end{align}
and then deduce a contradiction in a similar way to proving
Theorem~\ref{thm: main1}. 

We begin with a proof of Theorem~\ref{thm: main1}. Recalling the discussion
in \S 2.1.1 on the singularity types of $X$, we remember that our major goal is
to rule out the existence of $\tilde{\mathcal{S}}$, which is characterized as the
vanishing set of $\chi_C$, as discussed in \S 5. We also notice that the
sequence $\{(M,g_i)\}$, with $g_i:=g(t_i)$, enjoy the uniform curvature and
diameter bounds as mentioned in the introduction, see (\ref{eqn: regularity}).

We will rely on a maximum principle argument: were
$\tilde{\mathcal{S}} \not=\emptyset$, $\chi_C\ge 0$ vanishes on
$\tilde{\mathcal{S}}$, leaving us a positive global maximum within
$\tilde{\mathcal{R}}$, by the continuity of $\chi_C$ and the compactness of
$X$; on the other hand, since this maximum is achieved within
$\tilde{\mathcal{R}}$, we could express $\chi_C$ in the limit unwrapped
neighborhood around the maximum point, as a constant multiple $\sqrt{\det H}$
--- the volume density of the limit central distribution, as shown in \S 4;
exploiting the local Riemannian submersion structure around in the limit
unwrapped neighborhood via O'Neill's formula, we see that Propositions
\ref{prop: steady_soliton_metric} and \ref{prop: expanding_soliton_metric}
guarantee $\ln \det H$ to be $\ln (\det H)^{\frac{1}{2}}u_{\infty}$-sub-harmoic,
but the maximum principle applies to show that $\ln\det H$ is locally constant;
this will imply that $\chi_C$ is a positive constant, whence the non-existence
of $\tilde{\mathcal{S}}$.

More specifically, recall that in \S 4 we have constructed on $X$ a limit
central density function $\chi_C\ge 0$ which is continuous and vanishes exactly
on $\tilde{\mathcal{S}}$. Letting $A=\max_X \chi_C$, we know that there is some
$x\in \tilde{\mathcal{R}}$ where $\chi_C(x)=A$, by the compactness of $X$.
By the continuity of $\chi_C$, we see that $\chi_C^{-1}(A)$ is a closed and
non-empty subset of $X$, and we will show that it is open.

Now fixing any $x_0\in \tilde{\mathcal{R}}\cap \chi_C^{-1}(A)$, we let
$U_{x_0}$, $V_{x_0}$ and $W_{x_0}$ be as given in Theorem~\ref{thm:
canonical_nbhd}. We let $G_{x_0}$ denote the orbifold group and
$f:W_{x_0}\to V_{x_0}$ denote the fibration. By Propositions \ref{prop:
steady_soliton_metric} and \ref{prop: expanding_soliton_metric}, we know that on
$W_{x_0}$ there is a limit metric $\tilde{g}_{\infty}$ together with a potential
function $u_{\infty}$ satisfying the gradient steady or expanding Ricci soliton
equations. There is a simply connected nilpotent Lie group $N$ which acts on
$(W_{x_0}, \tilde{g}_{\infty})$ freely and isometrically, and the $N$ orbits
are exactly the $p$ fibers. 
Moreover, there is a limit central distribution $\mathfrak{C}$ of Killing
vector fields tangent to the $N$ orbits, so that $\mathfrak{C}$ provides a
Riemannian foliation of the manifold $(W_{x_0},\tilde{g}_{\infty})$.
Integrating vector fields in $\mathfrak{C}$ we obtain the action of a simply
connected abelian group $Z\triangleleft C(N)$ on $W_{x_0}$.
This action is still free and isometric, and therefore $W_{x_0}\slash Z$ is a
smooth Riemannian manifold when equipped with the quotient metric
$[\tilde{g}_{\infty}]$. We let $[p]:W_{x_0}\slash Z\to V_{x_0}$ denote the
quotient fibration. Letting $H$ denote the metric $\tilde{g}_{\infty}$
restricting to $\mathfrak{C}$, then $H$ is invariant under the $N$ action,
whence the constancy of $\det H$, the volume form of $H$, along each $N$ orbit,
thus descending to a mooth function on $V_{x_0}$, still denoted by $\det H$.
Finally, by Theorem \ref{thm: limit_central_density}, we know that there is a
constant $C(x_0)>0$ such that $\chi_C=C(x_0)\sqrt{\det H}$ throughout $V_{x_0}$.

By the Riemannian foliation structure on $W_{x_0}$, we could compute
$\nabla^2_{\tilde{g}_{\infty}}\ln u_{\infty}$ as following (with the notational
convention of \cite{Lott10}):
\begin{align}
\begin{split}
-\ln u_{\infty;\ ab}\ =\ 
&-\frac{1}{2}H_{ab,\ \alpha}\ln u_{\infty,\ \alpha}+\frac{1}{4}H_{ab,\
\alpha}H^{cd}H_{cd,\ \alpha};\\
-\ln u_{\infty;\ a\alpha}\ =\ 
&-\frac{1}{2}H_{ab}\mathbf{A}_{\alpha\beta}^j\ln u_{\infty,\
\beta};\\
-\ln u_{\infty;\ \alpha\beta}\ =\ &-\ln u_{\infty;\
\alpha\beta}+\frac{1}{2}H^{ab}H_{ab;\ \alpha \beta}-\frac{1}{2}H_{ab,\
\alpha}H_{ab,\ \beta}.
\end{split}
\label{eqn: total_hessian_barf}
\end{align}
Here $a,b,c,d$ denote the indicies along the fibers of $[p]$, and $\alpha,\beta$
denote the indicies in those directions perpendicular to the fibers of $[p]$,
and $\mathbf{A}$ measures the failure of the integrability of the basic vector
fields labelled by indicies $\alpha,\beta$, see also \cite{Besse, ONeill}. On
the other hand, $\Rc_{\tilde{g}_{\infty}}$ can be computed via O'Neill's formula
\cite{Besse, ONeill} --- we notice that the fiber metric $H$ is flat. Adding
$\Rc_{\tilde{g}_{\infty}}$ and $-\nabla_{\tilde{g}_{\infty}}^2\ln u_{\infty}$,
and collecting with respect to types, we get:
\begin{align*}
\begin{split}
&\Rc_{\tilde{g}_{\infty}}-\nabla_{\tilde{g}_{\infty}}^2\ln u_{\infty}\\ =\
&-\frac{1}{2}\left(H_{ab;\ \alpha\alpha}+\frac{1}{2}(\ln \det
H)_{,\ \alpha} H_{ab,\ \alpha}-H^{cd}H_{ac,\ \alpha}H_{bd,\
\alpha} -\frac{1}{2}\mathbf{A}_{\alpha\beta}^c
\mathbf{A}_{\alpha\beta}^dH_{ac}H_{bd} +H_{ab,\ \alpha}\ln u_{\infty,\
\alpha}\right)\\
&+\frac{1}{2}\left(\mathbf{A}_{\alpha\beta;\ \beta}^a+H^{ab}H_{bc,\ \beta}
\mathbf{A}_{\alpha\beta}^c +\frac{1}{2}(\ln \det H)_{,\ \beta}
\mathbf{A}_{\alpha\beta}^a -\mathbf{A}^a_{\alpha\beta}\ln u_{\infty,\ \beta}
\right)\\
&+\left((\Rc_{[\tilde{g}_{\infty}]})_{\alpha\beta} -\frac{1}{2}(\ln \det
H)_{;\ \alpha\beta}-\frac{1}{4}H_{ab,\ \alpha}H_{ab,\ \beta}
 -\frac{1}{2}\mathbf{A}^a_{\alpha\gamma}\mathbf{A}^a_{\beta\gamma} -\ln
 u_{\infty;\ \alpha\beta}\right).
\end{split}
\end{align*}

Since the above decomposition of $\Rc_{\tilde{g}_{\infty}}$ is orthogonal with
respect to the types, by Proposition \ref{prop: steady_soliton_metric} we have
the following equation of symmetric two tensors holding on $W_{x_0}$:
\begin{align*}
H_{ab;\ \alpha\alpha}+\frac{1}{2}(\ln \det H)_{,\ \alpha} H_{ab,\ \alpha}
-H^{cd}H_{ac,\ \alpha}H_{bd,\ \alpha}
-\frac{1}{2}\mathbf{A}_{\alpha\beta}^c\mathbf{A}_{\alpha\beta}^dH_{ac}H_{bd}
+H_{ab,\ \alpha} \ln u_{\infty,\ \alpha}\ =\ 0.
\end{align*}
Consequently, since $H$ is positive definite on $W_{x_0}$, we could trace the
above equation of tensors by $H$ to obtain the numerical equation 
\begin{align*}
\Delta^{\perp}_{\tilde{g}_{\infty}}\ln \det H+\frac{1}{2}|\nabla^{\perp} \ln \det
H|^2-\tilde{g}_{\infty}(\nabla^{\perp} \ln \det H,\nabla^{\perp} \ln
u_{\infty})\ =\ \frac{1}{2}|\mathbf{A}|^2.
\end{align*}
Here by $\Delta_{\tilde{g}_{\infty}}^{\perp}$ and $\nabla^{\perp}$ we mean
taking derivatives in directions perpenticular to the leaves of $\mathfrak{C}$.
Since $\mathfrak{C}$ provides a Riemannian foliation, and that $\ln
\det H$ and $\ln u_{\infty}$ are constant along the leaves of $\mathfrak{C}$, we
see that the above equation descends to one valid on the quotient
$(W_{x_0}\slash Z,[\tilde{g}_{\infty}])$:
\begin{align}\label{eqn: elliptic_equality}
\Delta_{[\tilde{g}_{\infty}]}\ln \det H+\frac{1}{2}|\nabla \ln \det
H|^2-[\tilde{g}_{\infty}](\nabla \ln \det H,\nabla \ln u_{\infty})\
=\ \frac{1}{2}|\mathbf{A}|^2.
\end{align}
Moreover, as functions on $W_{x_0}$, $\ln \det H$ and $\ln u_{\infty}$ are even
constant along entire $p$ fibers, and $\sqrt{\det H}$ is a constant multiple of
$\chi_C$, seen as functions on $W_{x_0} \slash N$. Therefore $\ln \det H$ is
equal to its local maximum ($=C(x_0)^{-2}A^2$) everywhere on $[p]^{-1}(x_0)
\subset W_{x_0}\slash Z$. Now applying the maximum principle argument to $\ln
\det H$ (regarded as a function on $W_{x_0}\slash Z$) at any point on
$[p]^{-1}(x_0)$, by (\ref{eqn: elliptic_equality}) we know that it must be a
positive constant throughout $W_{x_0}\slash Z$. 
 Consequently, by the constancy of $\ln \det H$ on any $p$ fiber, and the fact
 that $\sqrt{\det H}$ is a constant multiple of $\chi_C$ on $W_{x_0}\slash N$,
 we know that $\chi_C= A$ within $U_{x_0}$ and thus $U_{x_0}\subset
 \chi_C^{-1}(A)$. But $U_{x_0}$ is open in $X$, and thus $\chi_C^{-1}(A)$ is
 also open in $X$, implying $\chi_C=A>0$ all over $X$, whence the non-existence
 of corner singularity, or equivalently $\tilde{\mathcal{S}}=\emptyset$. This
 is the desired conclusion of Theorem \ref{thm: main1}.
 
 Now to prove Theorem~\ref{thm: main2}, we consider a type-III Ricci flow
 satisfying $\diam (M,g(t))\le Dt^{\frac{1}{2}}$ and (\ref{eqn:
 contrapositive}). Pick a sequence $\{(M,g_i)\}$ with $g_i:=t_i^{-1}g(t_i)$
 such that $\lim_{i\to\infty}|M|_{g_i}\diam (M,g_i)^{-m}=0$. The assumptions on the
 curvature of the flow and Shi's estimates ensure that $\{(M,g_i)\}$ satisfy the
 regularity control (\ref{eqn: regularity}). Moreover, we could find an
 orbifold point $x_0$ in the possible collapsing limit $(X,d)$ where
 $\chi_C(x_0)$ achieves its global maximum. Then we could set up as before to
 write down a gradient expanding Ricci soliton equation in the unwrapped
 neighborhood $W_{x_0}$ around $x_0$, as concluded from Proposition \ref{prop:
 expanding_soliton_metric}. But this time, due to the structure of the expanding
 soliton equation, (\ref{eqn: elliptic_equality}) becomes 
 \begin{align}\label{eqn: elliptic_equality2}
 \Delta_{[\tilde{g}_{\infty}]}\ln \det H+\frac{1}{2}|\nabla \ln \det
H|^2-[\tilde{g}_{\infty}](\nabla \ln \det H,\nabla \ln u_{\infty})\
=\ \frac{1}{2}|\mathbf{A}|^2+\dim Z.
 \end{align}
 We can then argue by the maximum principle as before, to conclude that $\ln
 \det H$ is constant on $W_{x_0}$. Therefore, $\dim Z=0$.

However, we recall the formation of $Z$: before taking limit, there are simply
connected nilpotent Lie groups $N_i$ acting on $W_{x_0}$ freely, and so does
their centers $C(N_i)$; and $Z$ is nothing but the accumulation points of the
orbits of $C(N_i)$, after taking limit. Therefore, $\dim Z\ge \dim C(N_i)$ for
all $i$ sufficiently large. Notice that by the nilpotency of $N_i$, $\dim
C(N_i)>0$ unless $\dim N_i=0$. Therefore, $\dim Z>0$ were the sequence
$\{(M_i,g_i)\}$ to collapse.

This contradiction eliminates the possible volume collapsing of $\{(M_i,g_i)\}$,
or equivalently, the global volume ratio of $\{(M,g(t_i))\}$ has a uniformly
positive lower bound, contradicting the selection of $\{t_i\}$, guaranteed by
(\ref{eqn: contrapositive}). And the fallacy of (\ref{eqn: contrapositive})
establishes Theorem~\ref{thm: main2}.

\section{Connections with known results}
It is known that negatively curved compact manifolds tend to be rigid. Important
results along this direction include Gromov's uniform volume lower bound for
compact manifolds with negatively bounded sectional curvatrue \cite{Gromov78a}.
This theorem has been generalized by Rong \cite{Rong98} to the case of
negatively Ricci-curved manifolds with bounded curvature and diameter. In this
appendix we give a short proof of Rong's result as an application of Theorems
\ref{thm: canonical_nbhd} and \ref{thm: limit_central_density}:
\begin{theorem}[Theorem 0.4 of \cite{Rong98}]\label{thm: Rong} Let $(M^m,g)$ be
an $m$-dimensional closed Riemannian manifold satisfying the following
conditions:
\begin{enumerate}
  \item $\diam (M,g)\le D$;
  \item  $\Rc_g\le -\lambda(m-1)g$ for some $\lambda\in (0,1)$;
    \item the sectional curvature is uniformly bounded below by $-1$.
\end{enumerate}
Then there is a constant $v=v(m,D,\lambda)>0$ such that 
\begin{align*}
\vol (M,g)\ \ge\ v.
\end{align*}
\end{theorem}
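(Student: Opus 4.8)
The plan is to argue by contradiction along the lines already sketched in the introduction for Theorem~\ref{thm: main2}, but now in the static (non-Ricci-flow) setting afforded by the hypotheses. Suppose no such $v$ exists; then there is a sequence $\{(M_i^m, g_i)\}$ of closed manifolds satisfying (1)--(3) with $\vol(M_i,g_i)\to 0$. After rescaling so that $\sup_{M_i}|\Rm_{g_i}|\le 1$ (the diameter bound (1) is preserved up to a fixed factor since the sectional curvature lower bound $-1$ together with $\Rc\le -\lambda(m-1)g$ controls the curvature scale, so the rescaling factor stays in a compact range depending on $m,D,\lambda$), the sequence satisfies (\ref{eqn: regularity}) with $C_{1.3}(0)=1$ by Shi's estimates applied to a short-time Ricci flow starting from $g_i$; alternatively one passes to the frame bundles directly where a genuine curvature bound is available. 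Since $\vol(M_i,g_i)\to 0$ while the diameter stays bounded, Gromov's compactness theorem gives, after passing to a subsequence, collapse to a lower-dimensional metric space $(X,d)$. The goal is to show that this collapse is impossible, which forces the uniform volume lower bound.

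First I would invoke Theorem~\ref{thm: canonical_nbhd} to obtain, around any orbifold point $x_0\in\tilde{\mathcal{R}}\subset X$, the limit unwrapped neighborhood $W_{x_0}=V_{x_0}\times\mathbb{R}^{m-n}$ with limit metric $\tilde{g}_\infty$, the limit nilpotent Lie group $N_\infty$ acting by left translations making $p:(W_{x_0},\tilde{g}_\infty)\to(V_{x_0},\hat{g}_X)$ a Riemannian submersion, and the limit central distribution $\mathfrak{C}=\{X_1,\ldots,X_{k_0}\}$ of commuting Killing fields tangent to the $p$-fibers, from \S3.3.2 and Theorem~\ref{thm: limit_central_density}. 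The key observation is that the hypothesis $\Rc_{g_i}\le -\lambda(m-1)g_i$ passes to the limit: since the $\tilde{g}_i$ converge in $C^\infty_{loc}(W_{x_0})$ to $\tilde{g}_\infty$, we get $\Rc_{\tilde{g}_\infty}\le -\lambda(m-1)\tilde{g}_\infty$ on $W_{x_0}$, in particular $\Rc_{\tilde{g}_\infty}\le 0$. Now I would apply O'Neill's formula exactly as in \S6.2 to the Riemannian foliation by the leaves of $\mathfrak{C}$ (which are intrinsically flat, being orbits of commuting Killing fields with flat restricted metric $H$): tracing the fiber-direction component of $\Rc_{\tilde{g}_\infty}$ against $H$ yields
\begin{align*}
\Delta^{\perp}_{\tilde{g}_\infty}\ln\det H+\frac{1}{2}|\nabla^{\perp}\ln\det H|^2\ =\ \frac{1}{2}|\mathbf{A}|^2-(\Rc_{\tilde{g}_\infty})_{aa}\ \ge\ \frac{1}{2}|\mathbf{A}|^2,
\end{align*}
where there is no potential-function term since there is no soliton structure here, and where the right-hand side is $\ge 0$ precisely because $\Rc_{\tilde{g}_\infty}\le 0$ makes $-(\Rc_{\tilde{g}_\infty})_{aa}\ge 0$ along the fiber directions. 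This descends to an elliptic differential inequality on the quotient $(W_{x_0}\slash Z,[\tilde{g}_\infty])$ making $\ln\det H$ subharmonic.

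Then, as in \S6.2, since $\tilde{\mathcal{S}}=\chi_C^{-1}(0)$ and $\chi_C=C(x_0)\sqrt{\det H}$ locally by Theorem~\ref{thm: limit_central_density}, continuity of $\chi_C$ and compactness of $X$ place a positive global maximum of $\chi_C$ at some $x_0\in\tilde{\mathcal{R}}$; the maximum principle applied to the subharmonic function $\ln\det H$ at a point of $[p]^{-1}(x_0)$ forces $\ln\det H$ to be locally constant, hence $\chi_C$ locally constant, hence (by connectedness and continuity) a positive constant throughout $X$, so $\tilde{\mathcal{S}}=\emptyset$ and $X=\tilde{\mathcal{R}}$ is a compact Riemannian orbifold of dimension $n<m$. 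But this means each $(M_i,g_i)$ is, for $i$ large, an infranil fiber bundle over the orbifold $X$ with fibers of positive dimension $m-n>0$; the fiber being an infranilmanifold $N\slash\Gamma$ with $N$ nilpotent, its center $C(N)$ has positive dimension unless $\dim N=0$, so $k_0=\dim Z\ge\dim C(N)>0$, i.e. $\det H>0$ and $\chi_C>0$ is consistent — the collapse genuinely occurs with $n<m$. The contradiction I would extract is the standard one: on a manifold of dimension $m$ with $\Rc\le-\lambda(m-1)g$ (so $\Rc<0$) one cannot have a continuous $N_\infty$-action by isometries with positive-dimensional orbits, because the Bochner formula forbids a nontrivial Killing field when $\Rc<0$ on a closed manifold — and more to the point, the limit submersion $p$ with flat totally-nontrivial fiber structure contradicts $\Rc_{\tilde{g}_\infty}\le-\lambda(m-1)\tilde{g}_\infty$: evaluating the fiber Ricci curvature of $\tilde{g}_\infty$ via O'Neill for a flat fiber $H$ gives $(\Rc_{\tilde{g}_\infty})_{aa}=\frac{1}{2}|\mathbf{A}_{a\cdot}|^2+(\text{terms involving }\ln\det H\text{ which vanish since }\ln\det H\text{ is constant})\ge 0$, contradicting $(\Rc_{\tilde{g}_\infty})_{aa}\le-\lambda(m-1)k_0<0$ as soon as $k_0>0$. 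Hence the collapse is impossible, and $\vol(M_i,g_i)$ must stay bounded below.

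The main obstacle I expect is the last step — pinning down exactly which quantity gives the contradiction from $\Rc<0$ — and keeping the rescaling bookkeeping honest: one must verify that the normalization making the curvature bounded by $1$ does not destroy either the diameter bound or the strict negativity $\Rc\le-\lambda(m-1)g$ in a way that degenerates as $i\to\infty$ (it does not, because the conditions $\Rc\le-\lambda(m-1)g$ and $\mathrm{sec}\ge-1$ together pin the curvature scale into a compact interval depending only on $m,D,\lambda$, so the rescaling factors are uniformly bounded above and below). Once the limit metric $\tilde{g}_\infty$ is known to satisfy $\Rc_{\tilde{g}_\infty}\le 0$ and to admit the positive-dimensional Killing foliation from Theorem~\ref{thm: limit_central_density}, the contradiction is immediate from O'Neill's formula for the flat leaves, exactly as in the soliton case of \S6.2 but with the potential term absent and the sign of the curvature term now working in our favor without needing any soliton structure. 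This is genuinely simpler than Theorems~\ref{thm: main1} and~\ref{thm: main2} precisely because the negativity of $\Rc$ is given a priori rather than having to be coaxed out of the long-time behavior of a flow.
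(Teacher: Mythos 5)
Your overall strategy — contradiction, collapse to $(X,d)$, maximum of $\chi_C$ at an orbifold point $x_0\in\tilde{\mathcal{R}}$, passage to the unwrapped neighborhood $W_{x_0}$ from Theorem~\ref{thm: canonical_nbhd}, O'Neill's formula for the flat $\mathfrak{C}$-leaves, and a maximum-principle contradiction — is exactly the paper's. But there is a genuine gap at the smoothing step. To use Theorems~\ref{thm: canonical_nbhd} and \ref{thm: limit_central_density} you need the full $C^\infty$ regularity (\ref{eqn: regularity}); you propose either running a short-time Ricci flow and invoking Shi, or passing to frame bundles. Neither works as stated. Ricci flow does not in general preserve an \emph{upper} Ricci bound such as $\Rc\le -\lambda(m-1)g$, and you cannot recover it by integrating $\partial_t\Rc = \Delta\Rc + Q(\Rm)$ over $[0,\delta]$, because Shi's estimate $|\nabla^2\Rm(t)|\le C/t$ is not integrable near $t=0$, so there is no immediate $C^0$ control of $\Rc(\delta)-\Rc(0)$. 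The frame-bundle alternative is a dead end too: bounds on $|\nabla^l\Rm_{\bar g_i}|$ require the very bounds on $|\nabla^l\Rm_{g_i}|$ you are trying to produce. The paper instead invokes Dai--Wei--Ye \cite{DWY96}, whose smoothing result is specifically engineered to produce nearby metrics $g_i'$ with uniform $|\nabla^l\Rm_{g_i'}|$ bounds \emph{while retaining} $\Rc_{g_i'}\le -\frac{\lambda}{2}(m-1)g_i'$ and comparable diameter and volume; this is the ingredient you are missing. Your preliminary rescaling is also unnecessary: hypotheses (2)+(3) already pin the sectional curvature into $[-1,\,(m-2)-\lambda(m-1)]$, a fixed range depending only on $m,\lambda$, which is all the collapsing machinery needs.

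Once the smoothing is replaced by \cite{DWY96}, the rest of your argument is sound though slightly roundabout. You use only $\Rc\le 0$ to make $\ln\det H$ subharmonic, conclude from the maximum principle that $\ln\det H$ is constant, and then re-invoke O'Neill to contradict $\Rc<0$. The paper short-circuits this: feeding the strict inequality $\Rc_{\tilde g_\infty}\le-\frac{\lambda}{2}(m-1)\tilde g_\infty$ into the traced O'Neill identity gives directly
$\Delta^h_{[\tilde g_\infty]}\ln\det H\ \ge\ \lambda(m-1)\dim Z\ >\ 0$
whenever $\dim Z>0$, which already contradicts the existence of the interior local maximum of $\ln\det H$ furnished by the max of $\chi_C$. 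This is the same mechanism as the expanding-soliton case (\ref{eqn: elliptic_equality2}) in Theorem~\ref{thm: main2}, with the $\dim Z$ term now coming from the negative Ricci hypothesis rather than the soliton equation, and it avoids the second pass through O'Neill. Finally, the Bochner-formula remark you include ("no nontrivial Killing field on a closed manifold with $\Rc<0$") is a red herring here, since $W_{x_0}$ is a local, non-compact neighborhood; you correctly pivot away from it, but it should simply be dropped.
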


\begin{proof} 
We prove by a contradiction argument. Suppose that there were a sequence
$\{(M_i,g_i)\}$ such that $|M_i|_{g_i} \to 0$ as $i\to \infty$. We start with
noticing that assumptions (2) and (3) ensure the sectional curvature to be
uniformly bounded between $-1$ and $(1-\lambda)(m-1)$. Invoking \cite[Theorem
1.1]{DWY96}, we then obtain suitable smoothings of the given metrics, and
obtain a sequence of nearby metrics $g'_i$ satisfying
\begin{enumerate}
  \item $\sup_{M_i}|\nabla^l\Rm_{g'_i}|\le C_l$;
  \item $\Rc_{g'_i}\le -\frac{\lambda}{2}(m-1)g'_i$;
  \item $D\diam (M_i,g_i)\le \diam (M_i,g'_i)\le D^{-1}\diam (M_i,g_i)$
  for some $D\in (0,1)$ independent of $i$; and
  \item $C|M_i|_{g_i}\le |M_i|_{g'_i}\le C^{-1}|M_i|_{g_i}$ for some
  $C\in (0,1)$ independent of $i$.
\end{enumerate} 

If $|M_i|_{g_i}\to 0$ as $i\to \infty$, by (1), (3) and (4) above we know that
the sequence $\{(M_i,g'_i)\}$ collapses with bounded curvature and diameter, and
a sub-sequence, still denoted by the original one, converges to a lower
(Hausdorff) dimensional metric space $(X,d)$. Moreover, there is a continuous 
non-negative limit central density function $\chi_C\ge 0$ on $X$, as guaranteed
by Theorem \ref{thm: limit_central_density}. Now arguing as before around the
global maximum $x_0$ of $\chi_C$, which has to be an orbifold point. We can work
in the limit unwrapped neighborhood for $x_0$, as constructed in Theorem
\ref{thm: canonical_nbhd}. 

Notice that on $W_{x_0}$, the fiber-wise covering metrics $\tilde{g}'_i$ already
has strictly negative Ricci curvature by (2), and by the regularity assumption
(1), we have a limit metric $\tilde{g}_{\infty}$ on $W_{x_0}$ satisfying 
\begin{align*}
\Rc_{\tilde{g}_{\infty}}\ \le\ -\frac{\lambda}{2}(m-1)\tilde{g}_{\infty}.
\end{align*} 
Now we follow the argument in \S 6.2, focusing on the limit central distribution
$\mathfrak{C}$ and the abelian group $Z$ it generates. Recalling that $Z$ acts
on $W_{x_0}$ freely, and applying the O'Neill's formula to the volume form $\det
H$ of the $\mathfrak{C}$ leaves, we obtain the elliptic inequality satisfied by
the $\ln \det H$:
\begin{align*}
\Delta^h_{[\tilde{g}_{\infty}]}\ln \det H\ \ge\ \lambda (m-1)\dim Z,
\end{align*}
for some smooth function $h$ on $W_{x_0}\slash Z$. This equation is the same as
(\ref{eqn: elliptic_equality2}) and the exact same argument proving
Theorem~\ref{thm: main2} tells that $\{(M,g_i')\}$ cannot collapse, whence the
volume non-collapsing of the original metrics $\{g_i\}$ by (4). This
contradiction establishes the desired volume lower bound.
\end{proof}

\subsection*{Acknowledgement} 
I would like to thank Yu Li, Xiaochun Rong and Bing Wang for useful discussions
during the preparation of the paper. I would also like to thank Richard
Bamler, John Lott and Song Sun for helpful comments on the non-collapsing and
Ricci flatness of the long-time limits by the evolution of immortal Ricci flows
with uniformly bounded curvature and diameter, as well as on the case of
comapct type-III Ricci flows.

\vspace{0.5in}

\end{document}